\newtheorem{tm}{tm}[section]
\newtheorem{theorem}[tm]{Theorem}
\newtheorem{proposition}[tm]{Proposition}
\newtheorem{definition}[tm]{Definition}
\newtheorem{remark}[tm]{Remark}
\newcommand{\process}[1]{\{#1_t\}_{t\geq0}}
\newcommand{\chain}[1]{\{#1_n\}_{n\geq0}}
\newcommand {\R} {\ensuremath{\mathbb{R}}}
\newcommand {\ZZ} {\ensuremath{\mathbb{Z}}}
\newcommand {\N} {\ensuremath{\mathbb{N}}}
\newcommand {\CC} {\ensuremath{\mathbb{C}}}
\newcommand {\QQ} {\ensuremath{\mathbb{Q}}}
\numberwithin{equation}{section}
\def\be{\begin{equation}}
\def\ee{\end{equation}}
\begin{document}

 \title{Ergodicity and Fluctuations of a Fluid Particle Driven by Diffusions with Jumps}
 \author{ Guodong Pang\\ The Harold and Inge Marcus Department of Industrial and Manufacturing
 Engineering\\
 College
of Engineering, Pennsylvania State University, University
Park PA 16802, USA\\ \bigskip
Email: gup3@psu.edu\\
Nikola Sandri\'{c}\\ Institut f\"ur Mathematische Stochastik\\ Fachrichtung Mathematik, Technische Universit\"at Dresden, 01062 Dresden, Germany\\
and\\
Department of Mathematics\\
         Faculty of Civil Engineering, University of Zagreb, 10000 Zagreb,
         Croatia \\
        Email: nsandric@grad.hr}

 \maketitle
\begin{center}
{
\medskip

} \end{center}

\begin{abstract}
In this paper, we study the long-time behavior of a fluid particle immersed in a turbulent fluid  driven by a diffusion with jumps, that is, a Feller process associated with a
non-local operator.  We derive the law of large numbers and  central limit theorem for the  evolution process of the tracked fluid particle in the cases when the driving  process: (i) has periodic coefficients, (ii) is ergodic or (iii)  is a class of L\'evy processes.
 The presented results generalize the  classical and well-known results for fluid flows driven by elliptic diffusion processes.

\end{abstract}
{\small \textbf{AMS 2010 Mathematics Subject Classification:} 60F17, 60G17, 60J75}
\smallskip

\noindent {\small \textbf{Keywords and phrases:} diffusion with jumps, ergodicity, Feller process,  L\'evy process, semimartingale characteristics, symbol}

%
%
%
%


\allowdisplaybreaks

\section{Introduction}\label{s1}

\ \ \ \ Turbulence is one of the most important phenomena in nature
and engineering. It is a flow regime characterized by the presence
of irregular eddying motions, that is, motions with high level (high
Reynolds number) of vorticity. The key problem is to describe the
chaotic motion of a turbulent fluid. In practice this is done by
tracking the evolution of a specially marked physical entity
(particle) which is immersed in the
 fluid. Clearly, such a particle  must be
light and small enough (noninertial) so that its presence does not
affect the flow pattern. In this way, the motion of the  fluid may
be visualized through the evolution of this  passively advected
particle which follows the streamlines of the fluid. In experimental
sciences such a particle is often called a \emph{fluid particle} or
\emph{passive tracer}. The evolution of a fluid particle is
described by the following transport equation \be\label{eq1.1}
\dot{\rm{\mathbf{x}}}(t)={\rm \mathbf{v}}(t,{\rm \mathbf{x}}(t)),\quad {\rm \mathbf{x}}(0)=x_0,\ee where ${\rm \mathbf{v}}(t,x)\in\R^{d}$,
$d\geq1$, is the turbulent  velocity vector field which describes
the movement of the fluid at point $x\in\R^{d}$ in space at time
$t\geq0$ and ${\rm \mathbf{x}}(t)\in\R^{d}$ is the position of the particle at time
$t\geq0$. However, as we mentioned, turbulence is a chaotic process.
More precisely,
the velocity vector field  of any fluid flow which advects the fluid
particle should be a solution to the Navier-Stokes equation. But,
solutions to the Navier-Stokes equations for very turbulent  fluids
are unstable, that is, they have sensitive dependence on the initial
conditions that makes the fluid flow irregular both in space and
time. In other words, the velocity field ${\rm \mathbf{v}}(t,x)$ at a fixed point
varies with time in a nearly random manner. Similarly, ${\rm \mathbf{v}}(t,x)$ at a
fixed time varies with position in a nearly random manner. Due to this, a probabilistic approach to
this problem might be adequate and it might bring a substantial understanding of turbulence. Accordingly, our aim
is to study certain statistical properties of the turbulence through
simplified random velocity field models which possess some empirical
properties of turbulent fluid flows. Based on the symmetries of the
Navier-Stokes equation, it is well known that random velocity fields
 of  very turbulent flows (with high Reynolds numbers), among other properties, are
 time stationary, space homogeneous and isotropic. We refer the reader to
 \cite{chorin} and \cite{frisch}  for an
extensive overview on turbulent  flows.

 Now, instead of the transport equation (\ref{eq1.1}) we consider  the
following It\^o's stochastic differential equation \be\label{eq1.2}
dX_t=V(t,X_t)dt+ dB_t,\quad X_0=x_0.\ee Here, $\{V(t,x)\}_{t\geq0,\, x\in\R^{d}}$ is a
$d$-dimensional, $d\geq1$, random   velocity vector field defined on
a probability space $(\Omega_{V},\mathfrak{F}_{V},\mathbb{P}_{V})$ which
describes the movement of a turbulent fluid  and $\process{B}$
is a $d$-dimensional zero-drift Brownian motion  defined on a probability space
$(\Omega_B,\mathfrak{F}_B,\mathbb{P}_B)$ and given by a covariance matrix $\Sigma=(\sigma_{ij})_{i,j=1,\ldots,d}$ describing the molecular diffusivity of
the fluid. Recall that if for any
$T>0$ there exist random constants $C_T$ and $D_T$  such that
$$\sup_{0\leq t\leq T}|V(t,x,\omega_V)-V(t,y,\omega_V)|<C_T(\omega_V)|x-y|,\quad x,y\in\R^{d},\ \mathbb{P}_V\textrm{-a.s},$$ and
$$\sup_{0\leq t\leq T}|V(t,x,\omega_V)|<D_T(\omega_V)(1+|x|),\quad x\in\R^{d},\ \mathbb{P}_V\textrm{-a.s},$$ then (\ref{eq1.2}) has a unique solution (see \cite[Theorem 5.2.1]{oksendal}) defined on $(\Omega_V\times\Omega_B,\mathfrak{F}_V\times\mathfrak{F}_B,\mathbb{P}_V\times\mathbb{P}_B)$ which can be seen as an elliptic diffusion process in a turbulent random  environment.

The main goal is to describe certain statistical
properties of the fluid flow (that is, of $\process{X}$) through the
statistical properties of the velocity  field $\{V(t,x)\}_{t\geq0,\, x\in\R^{d}}$. In
particular, we are interested in the long-time behavior of $\process{X}$
(clearly, for small times $X_t\approx x_0$). More precisely, we
investigate whether
\be\label{eq1.3}\frac{X_{nt}}{n}\stackrel{\hbox{\scriptsize{$\mathbb{P}_{V}\times\mathbb{P}_B\, \textrm{-}\, \textrm{a.s.}$}}}{\xrightarrow{\hspace*{1.8cm}}}\bar{V} t\ee
as $n\longrightarrow\infty$, for some $\bar{V} \in\R^{d},$ and, if this is the
case, we analyze fluctuations of $\process{X}$ around $\bar{V}$, that is,
we investigate whether
\be\label{eq1.4}\left\{n^{\frac{1}{2}}\left(\frac{X_{nt}}{n}-\bar{V} t\right)\right\}_{t\geq0}\stackrel{\hbox{\scriptsize{$\textrm{d}$}}}{\longrightarrow}\process{W}\ee
as $n\longrightarrow\infty$. Here,
$\stackrel{\hbox{\scriptsize{$\textrm{d}$}}}{\longrightarrow}$
denotes the convergence in distribution, in the space of c\'adl\'ag
functions endowed with the Skorohod $J_1$ topology (see  \cite{billingsley-book} or \cite{jacod} for details), and
$\process{W}$ is
 a $d$-dimensional (possibly degenerated) zero-drift Brownian motion.

Long-time behavior of $\process{X}$ of type (\ref{eq1.3}) and
(\ref{eq1.4}) has been very extensively studied in the literature.
In particular, ergodicity of $\process{X}$, under the
assumption that $\{V(t,x)\}_{t\geq0,\, x\in\R^{d}}$ is regular enough and has only finite
dependency  in time or space,  has been deduced in \cite{kom-krupa-inv1} and \cite{kom-krupa-inv2}. Regarding the
analysis of fluctuations of $\process{X}$,  yet in 1923 G. I. Taylor
\cite{taylor} noticed that if the velocity field $\{V(t,x)\}_{t\geq0,\, x\in\R^{d}}$
decorrelates sufficiently fast in time or space, then the limit in
(\ref{eq1.4}) should have a diffusive character. A rigorous
mathematical analysis and  proofs of this fact have occupied many
authors. By assuming certain additional structural and statistical
properties of the velocity field $\{V(t,x)\}_{t\geq0,\, x\in\R^{d}}$ (time or space
independence, Markovian or Gaussian nature, strong mixing property
in time or space), Taylor's observation has been confirmed (see
\cite{kom-fan-1997}, \cite{kom-fan-1999},  \cite{kom-fan-2001}, \cite{kom-fan-2002},  \cite{fan-ryz-pap},
 \cite{pap-fan},
 \cite{pap-kes}, \cite{kom-ola-2001}, \cite{maj-kra}, \cite{pap-str-var}, \cite{pap-var}     and the reference
therein). Also, let us remark that  the lack of  long-range
(temporal or spatial) decorrelations of $\{V(t,x)\}_{t\geq0,\, x\in\R^{d}}$ may lead to memory
effects, that is, an ``anomalous" (non-Markovian) diffusive behavior
(fractional Brownian motion, local times of certain Markov
processes, subordinated Brownian motion,
exponential random variable) may appear as a limit in (\ref{eq1.4})
(see \cite{fan, fan-er},  \cite{kom-fan-2000-1},
\cite{kom-fan-2000-2}, \cite{kom-fan-2003}, \cite{kom-nov-ryz},  \cite{nua-xu}, \cite{pap-str-var} and the
references therein).

In this paper,  we consider a model in which the velocity field
$\{V(t,x)\}_{t\geq0,\, x\in\R^{d}}$ is space independent and its time dependence and randomness
are governed by a diffusion with jumps. More precisely,
$$V(t,x,\omega_V):=v(F_t(\omega_V)),\quad t\geq0,\ \omega_V\in\Omega_V,$$
where $v:\R^{\bar{d}}\longrightarrow\R^{d}$, $\bar{d}\geq1$, is a certain function (specified below) and $\process{F}$ is an $\R^{\bar{d}}$-valued diffusion with jumps (Feller process) determined by an
integro-differential operator (infinitesimal generator)
$(\mathcal{A},\mathcal{D}_\mathcal{A})$ of the form
\begin{align}\label{eq1.5}\mathcal{A}f(x)&=\langle b(x),\nabla f(x)\rangle+
\frac{1}{2}\rm{div}\it{c}(x)\nabla f(x)\nonumber
\\&\ \ \ +\int_{\R^{\bar{d}}}\left(f(y+x)-f(x)-\langle y,\nabla f(x)\rangle1_{\{z:|z|\leq1\}}(y)\right)\nu(x,dy),\quad
f\in\mathcal{D}_\mathcal{A}.\end{align} Our work is highly motivated
by  the works of A. Bensoussan, J-L. Lions and G. C. Papanicolaou \cite{benso-lions-book}, R. N.
Bhattacharya \cite{bhat} and G. C. Papanicolaou, D. Stroock and S. R. S.
Varadhan \cite{pap-str-var} in which they consider a model with
$\process{F}$ being a diffusion process determined by a second-order
elliptic operator $(\mathcal{A},\mathcal{D}_\mathcal{A})$ of the
form
$$\mathcal{A}f(x)=\langle b(x),\nabla f(x)\rangle+
\frac{1}{2}\rm{div}\it{c}(x)\nabla f(x),\quad
f\in\mathcal{D}_\mathcal{A},$$ and, under the  assumptions
that either $\process{F}$ is a diffusion  on the
$\bar{d}$-dimensional torus $\R^{\bar{d}}/\ZZ^{\bar{d}}$ and
$v(x)=(\mathcal{A}w_1(x),\ldots,\mathcal{A}w_{d}(x)),$ for
$w_1,\ldots,w_{d}\in C^{2}(\R^{\bar{d}}/\ZZ^{\bar{d}})$, or
$\process{F}$ is ergodic and
$v(x)=(\mathcal{A}w_1(x),\ldots,\mathcal{A}w_{d}(x))$, for
$w_1,\ldots,w_{d}\in C^{\infty}_c(\R^{\bar{d}}),$ they derive the
Brownian limit in  (\ref{eq1.4}).
Here
we extend their results by investigating the
long-time behaviors in (\ref{eq1.3}) and (\ref{eq1.4}) of $\process{X}$
driven by a diffusion with jumps.

We have identified three sets of conditions for the driving diffusion with jumps $\process{F}$ under which the law of large numbers (LLN) in \eqref{eq1.3} and the central limit theorem (CLT) in \eqref{eq1.4} hold for the process $\process{X}$.
In the first case, in Theorem \ref{tm1.1},  the driving process $\process{F}$ has  periodic coefficients $(b(x),c(x),\nu(x,dy))$.  Here we also  assume the existence, continuity (in space variables) and strict positivity of a transition density function $p(t,x,y)$ of $\{F_t\}_{t\ge 0}$; see discussions on the assumption in Remark \ref{rm2}.  These assumptions imply implicitly that the projection of the driving diffusion with jumps on the torus $\R^{\bar{d}}/\ZZ^{\bar{d}}$ is  ergodic.  In Theorem \ref{tm1.2}, we simply assume that the driving diffusion with jumps is ergodic, and establish  the limiting properties in \eqref{eq1.3} and \eqref{eq1.4}.
Since  any L\'evy process has constant coefficients  (L\'evy triplet), in Theorem \ref{tm1.3} we establish
the limiting properties in \eqref{eq1.3} and \eqref{eq1.4} for a class of L\'evy processes with certain coefficients properties which relax the conditions from Theorem \ref{tm1.1}.
 Note that (non-trivial) L\'evy processes are never ergodic, hence, in the L\'evy process case, the results in Theorem  \ref{tm1.2} do not apply. We also discuss the cases when the driving diffusions with jumps  are not necessarily ergodic in Section \ref{sec-Discussions}.

The main techniques used  in  \cite{benso-lions-book},  \cite{bhat} and \cite{pap-str-var}  are based on proving the  convergence of finite-dimensional distributions of the underlying processes, functional central limit theorems for stationary ergodic sequences and solving
 martingale problems, respectively. On the other hand, our approach in proving the main results, Theorems \ref{tm1.1}, \ref{tm1.2} and \ref{tm1.3},
 is through the  characteristics of a semimartingale (note that the process $\process{X}$ in our setting is a semimartingale). More precisely,
 by using the facts that $\process{F}$ and $\process{B}$ are independent and the regularity assumptions imposed upon $\process{F}$, and by the classical Birkhoff ergodic theorem, we can conclude the limiting behavior in (\ref{eq1.3}). To obtain the Brownian limit
  in (\ref{eq1.4}), we reduce the problem to the convergence of the corresponding  semimartingale characteristics. Namely,
   since $\process{F}$ is a semimartingale whose characteristics is given in terms of its L\'evy triplet
   (see \cite[Lemma 3.1 and Theorem 3.5]{rene-holder}), we explicitly compute the characteristics of $\process{X}$
   and show that it converges (in probability) to the characteristics of the Brownian motion $\process{W}$, which,
   according to \cite[Theorem VIII.2.17]{jacod}, proves the desired results.

The sequel of this paper is organized as follows.
In Section \ref{sec-DiffJumps}, we give some preliminaries on diffusions with jumps. In Section \ref{sec-MainResults}, we state the main results of the paper, Theorems \ref{tm1.1}, \ref{tm1.2} and \ref{tm1.3}. In Section \ref{sec-Proofs1}, we prove Theorems \ref{tm1.1} and \ref{tm1.2}, and in Section \ref{sec-Proofs2}, we prove Theorem \ref{tm1.3}. Finally, in Section \ref{sec-Discussions}, we present some discussions on the  ergodicity property of general diffusions with jumps and the limiting behaviors  in  (\ref{eq1.3}) and (\ref{eq1.4}) when the velocity field $\{V(t,x)\}_{t\geq0,\, x\in\R^{\bar{d}}}$ is governed by  general, not necessarily ergodic, diffusions with jumps.

\section{Preliminaries on Diffusions with Jumps}
\label{sec-DiffJumps}

\ \ \ \ Let
$(\Omega,\mathcal{F},\{\mathbb{P}^{x}\}_{x\in\R^{\bar{d}}},$
$\process{\mathcal{F}},\process{\theta},  \process{M})$, denoted by $\process{M}$
in the sequel, be a Markov process with  state space
$(\R^{\bar{d}},\mathcal{B}(\R^{\bar{d}}))$, where $\bar{d}\geq1$ and
$\mathcal{B}(\R^{\bar{d}})$ denotes the Borel $\sigma$-algebra on
$\R^{\bar{d}}$. A family of linear operators $\process{P}$ on
$B_b(\R^{\bar{d}})$ (the space of bounded and Borel measurable functions),
defined by $$P_tf(x):= \mathbb{E}^{x}[f(M_t)],\quad t\geq0,\
x\in\R^{\bar{d}},\ f\in B_b(\R^{\bar{d}}),$$ is associated with the process
$\process{M}$. Since $\process{M}$ is a Markov process, the family
$\process{P}$ forms a \emph{semigroup} of linear operators on the
Banach space $(B_b(\R^{\bar{d}}),||\cdot||_\infty)$, that is, $P_s\circ
P_t=P_{s+t}$ and $P_0=I$ for all $s,t\geq0$. Here,
$||\cdot||_\infty$ denotes the supremum norm on the space
$B_b(\R^{\bar{d}})$. Moreover, the semigroup $\process{P}$ is
\emph{contractive}, that is, $||P_tf||_{\infty}\leq||f||_{\infty}$
for all $t\geq0$ and all $f\in B_b(\R^{\bar{d}})$, and \emph{positivity
preserving}, that is, $P_tf\geq 0$ for all $t\geq0$ and all $f\in
B_b(\R^{\bar{d}})$ satisfying $f\geq0$. The \emph{infinitesimal generator}
$(\mathcal{A}^{b},\mathcal{D}_{\mathcal{A}^{b}})$ of the semigroup
$\process{P}$ (or of the process $\process{M}$) is a linear operator
$\mathcal{A}^{b}:\mathcal{D}_{\mathcal{A}^{b}}\longrightarrow B_b(\R^{\bar{d}})$
defined by
$$\mathcal{A}^{b}f:=
  \lim_{t\longrightarrow0}\frac{P_tf-f}{t},\quad f\in\mathcal{D}_{\mathcal{A}^{b}}:=\left\{f\in B_b(\R^{\bar{d}}):
\lim_{t\longrightarrow0}\frac{P_t f-f}{t} \ \textrm{exists in}\
||\cdot||_\infty\right\}.
$$ We call $(\mathcal{A}^{b},\mathcal{D}_{\mathcal{A}^{b}})$ the \emph{$B_b$-generator} for short.

A Markov process $\process{M}$ is said to be a \emph{Feller process}
if its corresponding  semigroup $\process{P}$ forms a \emph{Feller
semigroup}. This means that the family $\process{P}$ is a semigroup
of linear operators on the Banach space
$(C_\infty(\R^{d}),||\cdot||_{\infty})$  and it is \emph{strongly
continuous}, that is,
  $$\lim_{t\longrightarrow0}||P_tf-f||_{\infty}=0,\quad f\in
  C_\infty(\R^{\bar{d}}).$$ Here, $C_\infty(\R^{\bar{d}})$ denotes
the space of continuous functions vanishing at infinity.
Every Feller semigroup $\process{P}$  can be uniquely extended to
$B_b(\R^{\bar{d}})$ (see \cite[Section 3]{rene-conserv}). For notational
simplicity, we denote this extension again by $\process{P}$. Also,
let us remark that every Feller process possesses the strong Markov
property and has c\`adl\`ag sample paths (see  \cite[Theorems 3.4.19 and
3.5.14]{jacobIII}). This entails that $\process{M}$ is
\emph{progressively measurable}, that is, for each $t>0$ the
function $(s, \omega)\longmapsto M_s(\omega)$ on $[0,
t]\times\R^{\bar{d}}$ is measurable with respect to the $\sigma$-algebra
$\mathcal{B}([0,t])\times\mathcal{F}_t$, where $\mathcal{B}([0, t])$
is the Borel $\sigma$-algebra on $[0, t]$ (see \cite[Proposition
3.6.2]{jacobIII}). In particular, under an appropriate choice of the
velocity function $v(x)$, the process $\process{X}$ in \eqref{eq1.2} is well defined.
Further,
in the case of Feller processes, we call
$(\mathcal{A}^{\infty},\mathcal{D}_{\mathcal{A}^{\infty}}):=(\mathcal{A}^{b},\mathcal{D}_{\mathcal{A}^{b}}\cap
C_\infty(\R^{\bar{d}}))$ the \emph{Feller generator} for short. Note
that, in this case, $\mathcal{D}_{\mathcal{A}^{\infty}}\subseteq
C_\infty(\R^{\bar{d}})$ and
$\mathcal{A}^{\infty}(\mathcal{D}_{\mathcal{A}^{\infty}})\subseteq
C_\infty(\R^{\bar{d}})$.
If the set of smooth functions
with compact support $C_c^{\infty}(\R^{\bar{d}})$ is contained in
$\mathcal{D}_{\mathcal{A}^{\infty}}$, that is, if the Feller
generator
$(\mathcal{A}^{\infty},\mathcal{D}_{\mathcal{A}^{\infty}})$ of the
Feller process $\process{M}$ satisfies
    \begin{description}
      \item[(\textbf{C1})]
      $C_c^{\infty}(\R^{\bar{d}})\subseteq\mathcal{D}_{\mathcal{A}^{\infty}}$,
    \end{description}
 then, according to \cite[Theorem 3.4]{courrege-symbol},
$\mathcal{A}^{\infty}|_{C_c^{\infty}(\R^{\bar{d}})}$ is a \emph{pseudo-differential
operator}, that is, it can be written in the form
\begin{align}\label{eq1.8}\mathcal{A}^{\infty}|_{C_c^{\infty}(\R^{\bar{d}})}f(x) = -\int_{\R^{\bar{d}}}q(x,\xi)e^{i\langle \xi,x\rangle}
\hat{f}(\xi) d\xi,\end{align}  where $\hat{f}(\xi):=
(2\pi)^{-\bar{d}} \int_{\R^{\bar{d}}} e^{-i\langle\xi,x\rangle} f(x) dx$ denotes
the Fourier transform of the function $f(x)$. The function $q :
\R^{\bar{d}}\times \R^{\bar{d}}\longrightarrow \CC$ is called  the \emph{symbol}
of the pseudo-differential operator. It is measurable and locally
bounded in $(x,\xi)$ and continuous and negative definite as a
function of $\xi$. Hence, by \cite[Theorem 3.7.7]{jacobI}, the
function $\xi\longmapsto q(x,\xi)$ has, for each $x\in\R^{\bar{d}}$, the
following L\'{e}vy-Khintchine representation \begin{align}\label{eq1.9}q(x,\xi) =a(x)-
i\langle \xi,b(x)\rangle + \frac{1}{2}\langle\xi,c(x)\xi\rangle -
\int_{\R^{\bar{d}}}\left(e^{i\langle\xi,y\rangle}-1-i\langle\xi,y\rangle1_{\{z:|z|\leq1\}}(y)\right)\nu(x,dy),\end{align}
where $a(x)$ is a nonnegative Borel measurable function, $b(x)$ is
an $\R^{\bar{d}}$-valued Borel measurable function,
$c(x):=(c_{ij}(x))_{1\leq i,j\leq \bar{d}}$ is a symmetric nonnegative
definite $\bar{d}\times \bar{d}$ matrix-valued Borel measurable function
 and $\nu(x,dy)$ is a Borel kernel on $\R^{\bar{d}}\times
\mathcal{B}(\R^{\bar{d}})$, called the \emph{L\'evy measure}, satisfying
$$\nu(x,\{0\})=0\quad \textrm{and} \quad \int_{\R^{\bar{d}}}\min\{1,
|y|^{2}\}\nu(x,dy)<\infty,\quad x\in\R^{\bar{d}}.$$ The quadruple
$(a(x),b(x),c(x),\nu(x,dy))$ is called the \emph{L\'{e}vy quadruple}
of the pseudo-differential operator
$\mathcal{A}^{\infty}|_{C_c^{\infty}(\R^{\bar{d}})}$ (or of the symbol $q(x,\xi)$).
In the sequel, we assume the following conditions on the symbol
$q(x,\xi)$:
\begin{description}
  \item[(\textbf{C2})] $||q(\cdot,\xi)||_\infty\leq c(1+|\xi|^{2})$ for some
  $c\geq0$ and all $\xi\in\R^{\bar{d}}$ ;
  \item[(\textbf{C3})] $q(x,0)=a(x)=0$ for all $x\in\R^{\bar{d}}.$
\end{description}
Let us remark that, according to \cite[Lemma 2.1]{rene-holder},
condition (\textbf{C2}) is equivalent with the boundedness of the
coefficients of the symbol $q(x,\xi)$, that is,
$$||a||_\infty+||b||_\infty+||c||_{\infty}+\left|\left|\int_{\R^{\bar{d}}}\min\{1,y^{2}\}\nu(\cdot,dy)\right|\right|_{\infty}<\infty,$$
and, according to \cite[Theorem 5.2]{rene-conserv}, condition
(\textbf{C3}) (together with condition (\textbf{C2})) is equivalent
with the conservativeness property of the process $\process{M}$,
that is, $\mathbb{P}^{x}(M_t\in\R^{\bar{d}})=1$ for all $t\geq0$ and all
$x\in\R^{\bar{d}}$. Also, by combining (\ref{eq1.8}), (\ref{eq1.9}) and (\textbf{C3}), it is easy to see that $\mathcal{A}^{\infty}$, on $C_c^{\infty}(\R^{\bar{d}})$, has a representation as an integro-differential operator given in (\ref{eq1.5}).

Throughout this paper,  $\process{F}$ denotes a Feller
process satisfying conditions (\textbf{C1}), (\textbf{C2}) and (\textbf{C3}). Such a
process is called a \emph{diffusion with jumps}.
If $\nu(x,dy)=0$ for all $x\in\R^{\bar{d}}$, then $\process{F}$ is just called a \emph{diffusion}.  Note that this definition agrees with the standard definition of elliptic diffusion processes (see \cite{rogersI}).

Further, note that in the case when the symbol $q(x,\xi)$ does not depend
on the variable $x\in\R^{\bar{d}}$, $\process{M}$ becomes a \emph{L\'evy
process}, that is, a stochastic process   with stationary and
independent increments and c\`adl\`ag sample paths. Moreover, unlike Feller
processes, every L\'evy process is uniquely and completely
characterized through its corresponding symbol (see \cite[Theorems
7.10 and 8.1]{sato-book}). According to this, it is not hard to
check that every L\'evy process satisfies conditions
(\textbf{C1}), (\textbf{C2}) and (\textbf{C3}) (see \cite[Theorem 31.5]{sato-book}).
Thus, the class of processes we consider in this paper contains the
class of L\'evy processes.
Also,
 a L\'evy process is denoted by  $\process{L}$. For more
on diffusions with jumps  we refer the readers to the monograph
\cite{rene-bjorn-jian}.

Finally, we recall relevant  definitions of ergodicity of Makov processes.
 Let
$(\Omega,\mathcal{F},\{\mathbb{P}^{x}\}_{x\in
S},$ $\process{\mathcal{F}},\process{\theta},\process{M})$,
denoted by $\process{M}$ in the sequel, be a Markov process on a  state space
$(S,\mathcal{S})$. Here, $S$ is a nonempty set and $\mathcal{S}$ is
a $\sigma$-algebra of subsets of $S$. A probability measure $\pi(dx)$ on
$\mathcal{S}$ is called \emph{invariant} for $\process{M}$ if
$$\int_S\mathbb{P}^{x}(M_t\in B)\pi(dx)=\pi(B),\quad t\geq0,\  B\in\mathcal{S}.$$  A set $B\in\mathcal{F}$ is said to be
\emph{shift-invariant} if $\theta_t^{-1}B=B$ for all $t\geq0$. The
\emph{shift-invariant} $\sigma$-algebra $\mathcal{I}$ is a
collection of all such shift-invariant sets. The process
$\process{M}$ is said to be \emph{ergodic} if it possesses an invariant probability measure $\pi(dx)$ and if  $\mathcal{I}$ is
trivial with respect to $\mathbb{P}^{\pi}(d\omega)$, that is,
$\mathbb{P}^{\pi}(B)=0$ or $1$ for every $B\in\mathcal{I}$. Here,
for a probability measure $\mu(dx)$ on $\mathcal{S}$,
$\mathbb{P}^{\mu}(d\omega)$ is defined as
$$\mathbb{P}^{\mu}(d\omega):=\int_S\mathbb{P}^{x}(d\omega)\mu(dx).$$
Equivalently, $\process{M}$ is ergodic if it  possesses an invariant probability measure $\pi(dx)$ and if all  bounded harmonic functions are constant $\pi$-a.s.  (see \cite{meyn-tweedie-2009}). Recall,
a bounded measurable function $f(x)$ is called \emph{harmonic} (with respect to $\process{M}$) if $$\int_S\mathbb{P}^{x}(M_t\in dy)f(y)=f(x),\quad x\in S,\ t\geq0.$$
The process
$\process{M}$ is said to be \emph{strongly ergodic} if it possesses an invariant probability measure $\pi(dx)$ and if
$$\lim_{t\longrightarrow\infty}||\mathbb{P}^{x}(M_t\in\cdot)-\pi(\cdot)||_{TV}=0,\quad  x\in S,$$ where $||\cdot||_{TV}$ denotes the
total variation norm on the space of  signed measures on $\mathcal{S}$. Recall that strong ergodicity implies ergodicity (see \cite[Proposition 2.5]{bhat}). On the other hand, ergodicity does not necessarily imply strong ergodicity (for example, see Remark \ref{rm3}).

\section{Main Results}
\label{sec-MainResults}

\ \ \ \ Before stating the main results of this paper, we  introduce some notation we need.
Let $\tau:=(\tau_1,\ldots, \tau_{\bar{d}})\in (0,\infty)^{\bar{d}}$ be
fixed and let $\tau\ZZ^{\bar{d}}:=\tau_1\ZZ\times\ldots\times\tau_{\bar{d}} \ZZ.$
For $x\in\R^{\bar{d}}$, we define
$$x_\tau:=\{y\in\R^{\bar{d}}:x-y\in\tau\ZZ^{\bar{d}}\}\quad\textrm{and}\quad
\R^{\bar{d}}/\tau\ZZ^{\bar{d}}:=\{x_\tau:x\in\R^{\bar{d}}\}.$$ Clearly,
$\R^{\bar{d}}/\tau\ZZ^{\bar{d}}$ is obtained
 by identifying the opposite
faces of $[0,\tau]:=[0,\tau_1]\times\ldots\times[0,\tau_{\bar{d}}]$. Next,
let
 $\Pi_{\tau} : \R^{\bar{d}}\longrightarrow [0,\tau]$, $\Pi_{\tau}(x):=x_\tau$, be the covering map. A
function $f:\R^{\bar{d}}\longrightarrow\R$ is called $\tau$-periodic if
$f\circ\Pi_\tau(x)=f(x)$ for  all $x\in\R^{\bar{d}}$.
For an arbitrary $\tau$-periodic function
$f:\R^{\bar{d}}\longrightarrow\R$, by $f_\tau(x_\tau)$ we denote the
restriction of $f(x)$ to $[0,\tau]$.

We now state the main results of this paper,
 the proofs of which  are given in
Sections \ref{sec-Proofs1} and \ref{sec-Proofs2}.

\begin{theorem}\label{tm1.1}Let  $\process{F}$ be a $\bar{d}$-dimensional diffusion with jumps with  symbol $q(x,\xi)$
  which, in addition, satisfies:
 \begin{description}
  \item [(\textbf{C4})]  the function $x\longmapsto q(x,\xi)$ is $\tau$-periodic for all $\xi\in \R^{\bar{d}}$, or, equivalently, the corresponding L\'evy triplet $(b(x),c(x),\nu(x,dy))$ is $\tau$-periodic;
   \item [(\textbf{C5})]$\process{F}$ possesses a transition
density function $p(t,x,y)$, that is,
$$P_tf(x)=\int_{\R^{\bar{d}}}f(y)p(t,x,y)dy,\quad t>0,\ x\in\R^{\bar{d}},\ f\in B_b(\R^{\bar{d}}),$$ such that $(x,y)\longmapsto p(t,x,y)$ is continuous and $p(t,x,y)>0$
for all $t>0$ and all $x,y\in \R^{\bar{d}}$.
 \end{description}
Then, for any probability measure $\varrho(dx)$ on
$\mathcal{B}(\R^{d})$ having finite first moment, any initial
distribution $\rho(dx)$ of $\process{F}$ and any $\tau$-periodic
$w^{1},\ldots,w^{d}\in C_b^{2}(\R^{\bar{d}})$,
\begin{align}\label{eq1.10}\frac{X_{nt}}{n}\stackrel{\hbox{\scriptsize{$\mathbb{P}_V^{\rho}\times\mathbb{P}^{\varrho}_B\, \textrm{-}\, \textrm{a.s.}$}}}{\xrightarrow{\hspace*{1.8cm}}}\bar{V} t,\end{align}
as $n\longrightarrow\infty$,  and, under
$\mathbb{P}_V^{\rho}\times\mathbb{P}^{\varrho}_B(d\omega_V,d\omega_B)$,
\begin{align}\label{eq1.11}\left\{n^{\frac{1}{2}}\left(\frac{X_{nt}}{n}-\bar{V} t\right)\right\}_{t\geq0}\stackrel{\hbox{\scriptsize{$\textrm{d}$}}}{\longrightarrow}\process{W}, \end{align}
as $n\longrightarrow\infty$,
where
\begin{align}\label{eqV}
\bar{V}:=\left(\int_{\R^{d}}x_1\varrho(dx),  \ldots,\int_{\R^{d}}x_d\varrho(dx)\right) .
\end{align}
 Here,  $C^{k}_b(\R^{\bar{d}})$,
$k\geq0$, denotes the space of $k$ times differentiable functions
such that all derivatives up to order $k$ are bounded,
$V(t,x,\omega_V)=(\mathcal{A}w^{1}(F_t(\omega_V)),\ldots,\mathcal{A}w^{d}(F_t(\omega_V)))$, where the
operator $\mathcal{A}$ is defined by $(\ref{eq1.5})$, and
$\process{W}$ is a $d$-dimensional zero-drift Brownian motion
determined by a covariance matrix  of the form
\begin{align} \label{eq1.12} C:=\Bigg(&\sigma_{ij}+\int_{[0,\tau]}\Big[\langle \nabla w^{i}(x_\tau),
c(x_\tau)\nabla
w^{j}(x_\tau)\rangle\nonumber\\&+\int_{\R^{\bar{d}}}\left(w^{i}(y+x_\tau)-w^{i}(x_\tau)\right)\left(w^{j}(y+x_\tau)-w^{j}(x_\tau)\right)\nu(x_\tau,dy)\Big]\pi_\tau(dx_\tau)\Bigg)_{1\leq
i,j\leq d},
\end{align} where $\pi_\tau(dx_\tau)$ is an invariant measure associated with the projection of $\process{F}$, with respect to $\Pi_\tau(x)$, on $[0,\tau]$.
\end{theorem}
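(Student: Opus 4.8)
The plan is to use that the velocity field is space-independent, so that the solution of \eqref{eq1.2} is explicit, $X_t=x_0+\int_0^t V(s,\omega_V)\,ds+B_t$ with $V(s,\omega_V)=(\mathcal Aw^1(F_s),\ldots,\mathcal Aw^d(F_s))$, and then to remove the ``corrector'' $w^i(F_t)$ by a Dynkin-type identity. Since each $w^i$ is $\tau$-periodic and of class $C^2_b$, it descends to $w^i_\tau\in C^2(\R^{\bar d}/\tau\ZZ^{\bar d})$, and by \textbf{(C4)} the projection $\{\Pi_\tau(F_t)\}_{t\ge0}$ is a Feller process on the compact space $[0,\tau]$ whose generator agrees with $\mathcal A$ on $\tau$-periodic $C^2$ functions; equivalently, one may argue directly from the semimartingale characteristics of $\process F$ in terms of its L\'evy triplet (see \cite[Lemma 3.1 and Theorem 3.5]{rene-holder}). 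Hence $M^i_t:=w^i(F_t)-w^i(F_0)-\int_0^t\mathcal Aw^i(F_s)\,ds$ is, for each $i$, a square-integrable martingale (square integrability is immediate since $w^i$ and $\mathcal Aw^i$ are bounded, the latter because $w^i\in C^2_b$ and, by \textbf{(C2)}, the coefficients $b,c,\nu$ are bounded). Consequently $X^i_t-w^i(F_t)=x_0^i-w^i(F_0)-M^i_t+B^i_t$, so that, writing $B_t=\bar Vt+\tilde B_t$ with $\process{\tilde B}$ a zero-drift Brownian motion with covariance matrix $\Sigma$ independent of $\process F$ and $\bar V$ as in \eqref{eqV}, the process $\hat X_t:=X_t-(w^1(F_t),\ldots,w^d(F_t))$ has finite-variation part $\hat X_0+\bar Vt$ and martingale part $\process{\tilde B}-\process M$ (with respect to the filtration generated by $\process F$ and $\process{\tilde B}$).

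Next I would record the ergodic input. By \textbf{(C5)}, summing the transition density over the lattice $\tau\ZZ^{\bar d}$ shows that $\{\Pi_\tau(F_t)\}_{t\ge0}$ has a strictly positive, jointly continuous transition density on the compact space $[0,\tau]$; this forces existence of a \emph{unique} invariant probability measure $\pi_\tau$, with respect to which the projected process is positive Harris recurrent and ergodic (the content of Remark \ref{rm2}). Therefore Birkhoff's ergodic theorem holds for \emph{every} initial law $\rho$ of $\process F$: for any bounded measurable $\tau$-periodic $g$, $\frac1T\int_0^Tg(F_s)\,ds\longrightarrow\int_{[0,\tau]}g_\tau\,d\pi_\tau$ as $T\to\infty$, $\mathbb P^{\rho}_V$-a.s. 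Moreover $\int_{[0,\tau]}(\mathcal Aw^i)_\tau\,d\pi_\tau=0$ by invariance of $\pi_\tau$, and both $\mathcal Aw^i$ and the carr\'e-du-champ field $\Gamma(w^i,w^j)(x):=\langle\nabla w^i(x),c(x)\nabla w^j(x)\rangle+\int_{\R^{\bar d}}(w^i(y+x)-w^i(x))(w^j(y+x)-w^j(x))\nu(x,dy)$ are $\tau$-periodic and bounded (the jump term is dominated by a constant times $\int\min\{1,|y|^2\}\nu(\cdot,dy)$, which is bounded by \textbf{(C2)}); note that $\Gamma(w^i,w^j)$ is precisely the integrand appearing in \eqref{eq1.12}.

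For the law of large numbers \eqref{eq1.10}, divide the identity for $X^i$ by $n$ and rescale time: $\tfrac{X^i_{nt}}n=\bar V^it+\tfrac1n\bigl(x_0^i+w^i(F_{nt})-w^i(F_0)\bigr)-\tfrac{M^i_{nt}}n+\tfrac{\tilde B^i_{nt}}n$. The second term is $O(1/n)$ uniformly in $t$; $\tfrac{\tilde B^i_{nt}}n\to0$ locally uniformly in $t$, $\mathbb P^\varrho_B$-a.s., by the strong law for Brownian motion; and $\tfrac{M^i_{nt}}n=\tfrac1n(w^i(F_{nt})-w^i(F_0))-\tfrac1n\int_0^{nt}\mathcal Aw^i(F_s)\,ds\to0$ locally uniformly, $\mathbb P^\rho_V$-a.s., by the ergodic theorem of the previous step together with the equicontinuity in $t$ coming from boundedness of $\mathcal Aw^i$. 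Hence $X_{nt}/n\to\bar Vt$ locally uniformly, $\mathbb P^\rho_V\times\mathbb P^\varrho_B$-a.s. For the central limit theorem \eqref{eq1.11}, since $\|\hat X_t-X_t\|\le\sum_i\|w^i\|_\infty$, it suffices by Slutsky's lemma (in the $J_1$-topology) to prove that $\hat Y^{(n)}_t:=n^{-1/2}(\hat X_{nt}-\hat X_0-n\bar Vt)=n^{-1/2}(\tilde B_{nt}-M_{nt})$ converges in distribution to $\process W$. Now $\hat Y^{(n)}$ is a $d$-dimensional square-integrable martingale, and I would verify the hypotheses of \cite[Theorem VIII.2.17]{jacod} with limit the continuous Gaussian martingale $\process W$ satisfying $\langle W^i,W^j\rangle_t=C_{ij}t$: convergence of the first (modified) characteristic to $0$, asymptotic negligibility of the jumps, and $\widetilde C^{(n),ij}_t\stackrel{\mathbb P}{\longrightarrow}C_{ij}t$. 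The first two follow because $|\Delta\hat Y^{(n)}_s|=n^{-1/2}|w(F_{ns})-w(F_{ns-})|\le 2n^{-1/2}\max_i\|w^i\|_\infty\to0$, so $\mathbb P(\sup_{s\le t}|\Delta\hat Y^{(n)}_s|>\varepsilon)\to0$ for every $\varepsilon,t>0$, killing the third characteristic and (since $\hat Y^{(n)}$ is a martingale) the first. For the last one, independence of $\process{\tilde B}$ and $\process F$ gives that the predictable compensator of $[\hat X^i,\hat X^j]$ equals $\Sigma_{ij}t+\int_0^t\Gamma(w^i,w^j)(F_s)\,ds$ (the cross bracket $[\tilde B^i,M^j]$ vanishes), whence $\widetilde C^{(n),ij}_t=\tfrac1n\bigl(\Sigma_{ij}nt+\int_0^{nt}\Gamma(w^i,w^j)(F_s)\,ds\bigr)\stackrel{\mathbb P}{\longrightarrow}\Sigma_{ij}t+t\int_{[0,\tau]}\Gamma(w^i,w^j)_\tau\,d\pi_\tau=C_{ij}t$ by the ergodic theorem. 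Thus $\hat Y^{(n)}\Rightarrow\process W$, hence $n^{1/2}(X_{nt}/n-\bar Vt)\Rightarrow\process W$, which is \eqref{eq1.11}.

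The main obstacles I anticipate are, first, rigorously legitimizing the corrector martingale $\process{M^i}$ for the merely $\tau$-periodic (hence non-$C_\infty$) test function $w^i$ — this is exactly where the periodicity assumption \textbf{(C4)} and the passage to the compact quotient $[0,\tau]$ (or the semimartingale-characteristics description of $\process F$) are essential — and, second, the ergodicity input, i.e.\ transferring Birkhoff's theorem to an arbitrary initial law $\rho$ of $\process F$, which relies on the strict positivity and continuity of the transition density in \textbf{(C5)} to force uniqueness of $\pi_\tau$ and positive Harris recurrence of the projected process. Once these are in place, the verification of the conditions of \cite[Theorem VIII.2.17]{jacod} is essentially bookkeeping with semimartingale characteristics, and the identification of the limiting covariance $C$ with \eqref{eq1.12} is a direct computation of the carr\'e du champ of the operator \eqref{eq1.5}.
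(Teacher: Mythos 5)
Your proposal is correct and follows essentially the same route as the paper's proof: reduce to the $F$-driven integral, pass to the projected process on $[0,\tau]$ (whose strictly positive, continuous transition density yields a unique invariant law $\pi_\tau$ and Birkhoff's theorem for every initial distribution), use the Dynkin/corrector martingale $w^{i}(F_t)-w^{i}(F_0)-\int_0^{t}\mathcal{A}w^{i}(F_s)ds$, and conclude via convergence of the (modified) semimartingale characteristics and \cite[Theorem VIII.2.17]{jacod}, identifying the limiting covariance \eqref{eq1.12} through the carr\'e du champ. The only cosmetic differences are that you keep the Brownian component inside the martingale CLT rather than factoring it out by independence and scaling, you invoke positive Harris recurrence where the paper uses the uniform ergodicity bound together with \cite[Proposition 2.5]{bhat}, and you obtain the compensator of the quadratic covariation directly from the characteristics of $\{F_t\}_{t\geq0}$, whereas the paper computes it via $\mathcal{A}^{b}(w^{i}w^{j})-w^{i}\mathcal{A}^{b}w^{j}-w^{j}\mathcal{A}^{b}w^{i}$.
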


\begin{remark} \label{rm1}
{\rm Two nontrivial examples of diffusions with jumps
satisfying the conditions in $(\textbf{C5})$ can be found in the classes of  diffusions and
stable-like processes. If $\process{F}$ is a
 diffusion with  L\'evy triplet $(b(x),c(x),0)$, such that
$\inf_{x\in\R^{\bar{d}}}\langle\xi,c(x)\xi\rangle\geq c|\xi|^{2}$,
for some $c>0$ and all $\xi\in\R^{\bar{d}}$, and
 $b(x)$ and $c(x)$ are H\"older continuous with the index
$0<\beta\leq1$, then, in \cite[Theorem A]{sheu}, it has been proven
that $\process{F}$ possesses a continuous (in space variables) and strictly positive   transition density
function. Note that, because of the Feller property,
$\mathcal{A}^{\infty}(C_c^{\infty}(\R^{\bar{d}}))\subseteq
C_\infty(\R^{\bar{d}})$,  hence $b(x)$ and $c(x)$ are always
continuous functions.

Let $\alpha:\R^{\bar{d}}\longrightarrow(0,2)$ and
$\gamma:\R^{\bar{d}}\longrightarrow(0,\infty)$ be arbitrarily bounded and
continuously differentiable functions with bounded derivatives, such
that $$0<\inf_{x\in\R^{\bar{d}}}\alpha(x)\leq\sup_{x\in\R^{\bar{d}}}\alpha(x)<2\quad
\textrm{and}\quad \inf_{x\in\R^{\bar{d}}}\gamma(x)>0.$$ Under these
assumptions, in \cite{bass-stablelike}, \cite[Theorem
                                                         5.1]{vasili-stablelike} and
                                                         \cite[Theorem
                                                         3.3.]{rene-wang-feller}
                                                         it has been shown
                                                         that there
                                                         exists a
                                                         unique
                                                         diffusion with jumps,
                                                         called a
                                                         \emph{stable-like
                                                         process},
                                                         determined
by a symbol of the form $$q(x,\xi)=\gamma(x)|\xi|^{\alpha(x)}$$
which satisfies condition  $(\textbf{C5})$. Note that when $\alpha(x)$ and $\gamma(x)$ are
constant functions,  we deal with a rotationally invariant
stable L\'evy process.}
\hfill $\Box$
\end{remark}

\begin{remark} \label{rm2}
{\rm In $(\textbf{C5})$ we assume the existence, continuity $($in space variables$)$ and strict positivity of a transition density function $p(t,x,y)$ of $\process{F}$. According to \cite[Theorem 2.6]{sandric-tams},  the existence  of $p(t,x,y)$ also follows from
\begin{align}\label{eq3.01}\int_{\R^{\bar{d}}}\exp\left[-t\, \inf_{x\in\R^{\bar{d}}}\rm{Re}\,\it{q}(x,\xi)\right]d\xi<\infty,\quad t>0,\ x\in\R^{\bar{d}},\end{align}
 under
\begin{align}\label{eq3.02}\sup_{x\in\R^{\bar{d}}}|{\rm Im}\, q(x,\xi)|\leq c\inf_{x\in\R^{\bar{d}}}{\rm Re}\, q(x,\xi)\end{align} for some $0\leq c<1$ and all $\xi\in\R^{\bar{d}}.$
 According to \cite{friedman} and \cite[Theorems 7.10 and
8.1]{sato-book}, in the L\'evy process and diffusion cases, in order to ensure the
existence of a transition density function,  (\ref{eq3.02}) is not necessary.
Further, note that $\eqref{eq3.01}$ and $(\ref{eq3.02})$ also imply the continuity of $(x,y)\longmapsto p(t,x,y)$ for all $t>0$. Indeed,
 according to
\cite[Theorem 2.7]{rene-wang-feller}, we have
\begin{align}\label{eq2.4}\sup_{x\in\R^{\bar{d}}}\left|\mathbb{E}^{x}\left[e^{i\xi(F_t-x)}\right]\right|\leq\exp\left[-\frac{t}{16}\inf_{x\in\R^{\bar{d}}}{\rm Re}\,\it{q}(x,\rm{2}\xi)\right],\quad
t>0,\ \xi\in\R^{\bar{d}}.\end{align} Thus, from
 \eqref{eq3.01} and \cite[Proposition
2.5]{sato-book}, we have
\begin{align}\label{eq2.5}p(t,x,y)=(2\pi)^{-\bar{d}}\int_{\R^{\bar{d}}}e^{-i\xi(y-x)}\mathbb{E}^{x}\left[e^{i\xi(F_t-x)}\right]d\xi,\quad
t>0,\ x,y\in\R^{\bar{d}}.\end{align} Next, by \cite[Theorem
3.2]{rene-conserv}, the  function
$x\longmapsto\mathbb{E}^{x}\left[e^{i\xi(F_t-x)}\right]$ is
continuous for all $\xi\in\R^{\bar{d}}$. Finally, the continuity of
$(x,y)\longmapsto p(t,x,y)$, $t>0$, follows directly from
\eqref{eq3.01}, $(\ref{eq2.4})$, $(\ref{eq2.5})$ and the dominated convergence
theorem.
On the other hand, the strict positivity of the transition density function
$p(t,x,y)$ is a more complex problem. In the  L\'evy process and
diffusion case this problem has been considered in \cite{brockett},
\cite{rajput}, \cite{friedman}, \cite{sharpe}  and \cite{sheu}. In
the general case, the best we were able to prove is given in Proposition \ref{p2.2} in Section \ref{sec-Discussions}.}
\hfill $\Box$
\end{remark}

In
Theorem \ref{tm1.1} the strong ergodicity is hidden in  assumptions
 (\textbf{C4}) and (\textbf{C5}) (see  Section \ref{sec-Proofs1}).
In Theorem \ref{tm1.2}, we assume (strong) ergodicity directly and show the LLN and CLT hold.
From the physical point of view, ergodicity  is a
natural property of turbulent  flows. Namely,    a system is ergodic
if the underlying process visits  every region of the state space.
On the other hand, very turbulent  flows (with high Reynolds
numbers) are characterized by a low  momentum diffusion and high
momentum advection. In other words, a fluid particle in a very
turbulent fluid has a tendency to visit all regions of the state
space.

\begin{theorem}\label{tm1.2}Let  $\process{F}$ be a $\bar{d}$-dimensional  diffusion with jumps and let  $w^{1},\ldots,w^{d}\in C_b^{2}(\R^{\bar{d}})$ be arbitrary. If $\process{F}$ is ergodic with an invariant probability measure $\pi(dx)$,
then  there exists a $\pi(dx)$ measure zero
set $B\in\mathcal{B}(\R^{\bar{d}})$, such that for any probability
measure $\varrho(dx)$ on $\mathcal{B}(\R^{d})$ having finite first
moment and any initial distributions $\rho(dx)$  of
$\process{F}$, satisfying $\rho(B)=0$, we have
\begin{align}\label{eq1.13}\frac{X_{nt}}{n}\stackrel{\hbox{\scriptsize{$\mathbb{P}_V^{\rho}\times\mathbb{P}^{\varrho}_B\,
\textrm{-}\,
\textrm{a.s.}$}}}{\xrightarrow{\hspace*{1.8cm}}}\bar{V} t , \end{align}
as $n\longrightarrow\infty$,  and, under
$\mathbb{P}_V^{\rho}\times\mathbb{P}^{\varrho}_B(d\omega_V,d\omega_B)$,
\begin{align}\label{eq1.14}\left\{n^{\frac{1}{2}}\left(\frac{X_{nt}}{n}-\bar{V}
t\right)\right\}_{t\geq0}\stackrel{\hbox{\scriptsize{$\textrm{d}$}}}{\longrightarrow}\process{W} ,\end{align}
as $n\longrightarrow\infty$. Here, $\bar{V}$ is given in \eqref{eqV},
$V(t,x,\omega_V)=(\mathcal{A}w^{1}(F_t(\omega_V)),\ldots,\mathcal{A}w^{d}(F_t(\omega_V)))$, where the
operator $\mathcal{A}$ is defined by \eqref{eq1.5}, and
$\process{W}$ is a $d$-dimensional zero-drift Brownian motion
determined by a covariance matrix  of the form
\eqref{eq1.12}, with $\pi(dx)$ instead of $\pi_\tau(dx_\tau)$. In
addition, if $\process{F}$ is strongly ergodic, then the above
convergences hold  for any initial distribution of
$\process{F}.$
\end{theorem}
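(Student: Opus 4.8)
The plan is to use that the velocity field is space-homogeneous, so that $V(t,X_{t})=v(F_{t})$ with $v:=(\mathcal{A}w^{1},\dots,\mathcal{A}w^{d})$ does not depend on $X_{t}$, and $\process{X}$ is the explicit semimartingale
\[
X_{t}=X_{0}+\int_{0}^{t}v(F_{s})\,ds+B_{t}.
\]
Everything then reduces to the long-time behaviour of the additive functional $t\mapsto\int_{0}^{t}v(F_{s})\,ds$ of the ergodic driving process $\process{F}$, together with the classical scaling of the Brownian contribution; since $\process{F}$ and $\process{B}$ are independent, these two ingredients combine cleanly.

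For the LLN \eqref{eq1.13} I would first record the invariance identity $\int_{\R^{\bar{d}}}\mathcal{A}w^{i}\,d\pi=0$ for every $w^{i}\in C_{b}^{2}(\R^{\bar{d}})$: on the Feller domain this is just $\frac{d}{dt}\big|_{t=0}\int P_{t}f\,d\pi=0$, and one extends it to $C_{b}^{2}$ by mollification together with cut-offs $\chi_{R}(\cdot)=\chi(\cdot/R)$ (so that $\|D^{2}(\chi_{R}w^{i})\|_{\infty}$ stays bounded, $\mathcal{A}(\chi_{R}w^{i})\to\mathcal{A}w^{i}$ boundedly and pointwise, the large-jump part being controlled by $(\textbf{C2})$), and then passing to the limit. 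Since $\process{F}$ is ergodic, Birkhoff's ergodic theorem applied to the stationary version gives $\frac1n\int_{0}^{nt}v(F_{s})\,ds\to t\int v\,d\pi=0$ on a set of full $\mathbb{P}^{\pi}$-probability; this set has full $\mathbb{P}^{x}$-probability for $x$ outside some $\pi$-null set $B$, hence full $\mathbb{P}^{\rho}$-probability whenever $\rho(B)=0$. Combining this with the negligibility of $X_{0}/n$ and the strong law of large numbers for the Brownian contribution yields \eqref{eq1.13}, the constant being $\bar{V}$ as in \eqref{eqV}.

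The CLT \eqref{eq1.14} is the substantial step, and here I would follow the semimartingale-characteristics route announced in the Introduction. Dynkin's/Itô's formula — valid on $C_{b}^{2}$ because by \cite[Lemma~3.1 and Theorem~3.5]{rene-holder} the semimartingale characteristics of $\process{F}$ are those of its Lévy triplet — gives $w^{i}(F_{t})=w^{i}(F_{0})+\int_{0}^{t}\mathcal{A}w^{i}(F_{s})\,ds+M^{i}_{t}$, where $M=(M^{1},\dots,M^{d})$ is a martingale (genuine, not merely local, since $w^{i}$ and the $\Gamma^{ij}$ below are bounded) with
\begin{align*}
\langle M^{i},M^{j}\rangle_{t}&=\int_{0}^{t}\Gamma^{ij}(F_{s})\,ds,\\
\Gamma^{ij}&:=\langle\nabla w^{i},c\,\nabla w^{j}\rangle+\int_{\R^{\bar{d}}}\big(w^{i}(\cdot+y)-w^{i}\big)\big(w^{j}(\cdot+y)-w^{j}\big)\,\nu(\cdot,dy).
\end{align*}
Therefore, using $\mathcal{A}w^{i}=v_{i}$,
\[
n^{1/2}\Big(\tfrac{X_{nt}}{n}-\bar{V}t\Big)=R^{n}_{t}-n^{-1/2}M_{nt}+n^{-1/2}\big(B_{nt}-n\bar{V}t\big),
\]
with $R^{n}_{t}:=n^{-1/2}\big(X_{0}+w(F_{nt})-w(F_{0})\big)\to0$ uniformly in $t$ because $w$ is bounded. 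The remaining process $N^{n}_{t}:=-n^{-1/2}M_{nt}+n^{-1/2}(B_{nt}-n\bar{V}t)$ is a martingale in the joint filtration of $\process{F}$ and $\process{B}$; by independence the cross quadratic variation vanishes, whence $\langle N^{n}\rangle_{t}=\frac1n\int_{0}^{nt}\Gamma(F_{s})\,ds+\Sigma t\to t\big(\int\Gamma\,d\pi+\Sigma\big)=Ct$ by Birkhoff once more, with $C$ the covariance in \eqref{eq1.12} (with $\pi$ in place of $\pi_{\tau}$), while $\sup_{s}|\Delta N^{n}_{s}|\le 2n^{-1/2}\|w\|_{\infty}\to0$, so the Lindeberg condition is automatic. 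The martingale functional CLT — equivalently, convergence of the semimartingale characteristics of $N^{n}$ to $(0,Ct,0)$ together with \cite[Theorem~VIII.2.17]{jacod} — then yields $N^{n}\Rightarrow W$ in the Skorohod $J_{1}$ topology, and adding back the uniformly vanishing $R^{n}$ preserves the convergence. When $\process{F}$ is only ergodic the Birkhoff statements hold off the $\pi$-null set $B$; when $\process{F}$ is strongly ergodic the same time-average convergences hold $\mathbb{P}^{x}$-a.s. for every starting point $x$ (see \cite{bhat,meyn-tweedie-2009}), so $B$ may be dropped, which gives the final assertion.

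I expect the main technical point, rather than a deep obstacle, to be the passage from the Feller domain $\mathcal{D}_{\mathcal{A}^{\infty}}\subseteq C_{\infty}(\R^{\bar{d}})$ to $C_{b}^{2}(\R^{\bar{d}})$: neither the Dynkin decomposition of $w^{i}(F)$ with the stated quadratic covariations, nor the identity $\int\mathcal{A}w^{i}\,d\pi=0$, is immediate for functions that do not vanish at infinity, and both must be obtained through the mollification/cut-off procedure and the uniform bounds supplied by $(\textbf{C2})$. Once this is in place, the ergodic averaging (Birkhoff) and the martingale CLT are routine; the only remaining care is to confirm that $N^{n}$ is a genuine martingale with the claimed quadratic variation, which again follows from boundedness of $w^{i}$ and of the $\Gamma^{ij}$.
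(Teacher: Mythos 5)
Your proposal is correct and takes essentially the same route as the paper's proof: reduce by independence to the additive functional $\int_0^{\cdot}v(F_s)\,ds$, extend the generator/Dynkin martingale decomposition from the Feller domain to $C_b^{2}(\R^{\bar{d}})$ by cut-off approximation with the bounds from (\textbf{C2}) (the paper does this by citing B\"ottcher--Schilling--Wang), use the carr\'e-du-champ to identify the quadratic covariations, apply Birkhoff's theorem off a $\pi$-null set, and conclude by convergence of semimartingale characteristics via \cite[Theorem VIII.2.17]{jacod}, with the strongly ergodic case handled for arbitrary initial laws exactly as in the paper. The only cosmetic difference is that you fold the Brownian contribution into the prelimit martingale and check the Lindeberg/jump condition directly, whereas the paper first strips it off by independence and scaling and proves the CLT for $n^{-1/2}\int_0^{n\cdot}v(F_s)\,ds$ alone.
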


Note that  diffusions satisfy the assumptions in  (\textbf{C5})
 (see  \cite{rogersI} and
\cite[Theorem A]{sheu}). Hence, Theorems \ref{tm1.1} and \ref{tm1.2}
generalize the results related to diffusions, presented in
  \cite{bhat} and \cite{pap-str-var}. Also, note that  Theorem \ref{tm1.2} is
not applicable to  L\'evy processes, since a (non-trivial) L\'evy
process is never ergodic. On the other hand, in the  L\'evy
process case,  we can relax the
assumptions in (\textbf{C5}), that is, in order to derive the limiting behaviors in (\ref{eq1.3}) and
(\ref{eq1.4}) the strong ergodicity will not be crucial
anymore.   Because of space homogeneity of L\'evy processes,
the assumption in (\textbf{C4}) is automatically
satisfied. First, recall that for a $\tau$-periodic locally
integrable function $f(x)$ its Fourier coefficients are defined by
$$\hat{f}(k):=\frac{1}{|\tau|}\int_{[0,\tau]}e^{-i\frac{2\pi \langle
k, x\rangle}{|\tau|}}f(x)dx,\quad k\in\ZZ^{\bar{d}},$$ where
$|\tau|:=\tau_1\tau_2\cdots\tau_{\bar{d}}.$ Under the assumption
that
\begin{align}\label{eq1.16}
\sum_{k\in\ZZ^{\bar{d}}}|\hat{f}(k)|<\infty,\end{align}
$f(x)=\sum_{k\in\ZZ^{\bar{d}}}\hat{f}(k)e^{i\frac{2\pi \langle k,
x\rangle}{|\tau|}}.$ For example, (\ref{eq1.16}) is satisfied if
$f\in C_b^{1}(\R^{\bar{d}})$ (see \cite[Theorems 3.2.9 and
3.2.16]{grafakos}). In general, $
\sum_{k\in\ZZ^{\bar{d}}}|\hat{f}(k)||k|^{n}<\infty$, $n\geq0$, if
$f\in C_b^{n+1}(\R^{\bar{d}})$.  Recall that we use notation $\process{L}$ instead of $\process{F}$ for L\'evy processes as the driving process of $\process{X}$.

\begin{theorem}\label{tm1.3}Let  $\process{L}$ be a $\bar{d}$-dimensional  L\'evy
process with symbol $q(\xi)$ satisfying \begin{align}\label{eq1.17}\rm{Re}\,\it
{q}\left(\frac{\rm{2}\pi
\it{k}}{|\tau|}\right)>\rm{0},\quad \it{k}\in\ZZ^{\bar{d}}\setminus\{\rm{0}\},
\end{align}
  and let $w^{1},\ldots,w^{d}\in
C_b^{2}(\R^{\bar{d}})$ be $\tau$-periodic.
Then, there exists a Lebesgue measure zero
set $B\in\mathcal{B}(\R^{\bar{d}})$, such that for any probability
measure $\varrho(dx)$ on $\mathcal{B}(\R^{d})$ having finite first
moment and any initial distribution $\rho(dx)$  of
$\process{L}$, satisfying $\rho(B)=0$, we have
\begin{align}\label{eq1.18}\frac{X_{nt}}{n}\stackrel{\hbox{\scriptsize{$\mathbb{P}_V^{\rho}\times\mathbb{P}^{\varrho}_B\,
\textrm{-}\,
\textrm{a.s.}$}}}{\xrightarrow{\hspace*{1.8cm}}}\bar{V} t ,
\end{align}
as $n\longrightarrow\infty$,  and, under
$\mathbb{P}_V^{\rho}\times\mathbb{P}^{\varrho}_B(d\omega_V,d\omega_B)$,
\begin{align}\label{eq1.19}\left\{n^{\frac{1}{2}}\left(\frac{X_{nt}}{n}-\bar{V}
t\right)\right\}_{t\geq0}\stackrel{\hbox{\scriptsize{$\textrm{d}$}}}{\longrightarrow}\process{W} , \end{align}
as $n\longrightarrow\infty$. Here, $\bar{V}$ is given in \eqref{eqV},
$V(t,x,\omega_V)=(\mathcal{A}w^{1}(L_t(\omega_V)),\ldots,\mathcal{A}w^{d}(L_t(\omega_V)))$, where the
operator $\mathcal{A}$ is defined by \eqref{eq1.5}, and
$\process{W}$ is a $d$-dimensional zero-drift Brownian motion
determined by the covariance matrix given in \eqref{eq1.12} with $\pi_\tau(dx_\tau)=dx_\tau/|\tau|$.
In addition, if
\begin{align}\label{eq1.20}\sum_{k\in\ZZ^{\bar{d}}\setminus\{0\}}|k|^{2}|\hat{w}^{i}(k)|\left(1+\left(\rm{Re}\,\it
{q}\left(\frac{\rm{2}\pi
\it{k}}{|\tau|}\right)\right)^{-2}\right)<\infty,\quad
i=1,\ldots,d,\end{align} then the convergence in \eqref{eq1.19} holds for any  initial distribution   of
$\process{L}$.
\end{theorem}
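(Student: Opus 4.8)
The plan is to exploit the explicit semimartingale structure of $\process{X}$ and reduce the two limits to statements about L\'evy processes. First I would write $X_{nt}=x_0+\int_0^{nt}V(s,X_s)\,ds+B_{nt}=x_0+\int_0^{nt}(\mathcal{A}w^1(L_s),\dots,\mathcal{A}w^d(L_s))\,ds+B_{nt}$. Since each $w^i$ is $\tau$-periodic with absolutely summable Fourier coefficients (because $w^i\in C_b^2\subseteq C_b^1$, so \eqref{eq1.16} holds, and in fact $\sum_k|k|^2|\hat w^i(k)|<\infty$ because $w^i\in C_b^2$... but here we only need $C^2$, so $\sum|k||\hat w^i(k)|<\infty$ and the Laplacian-type terms need a little more care — one uses $\mathcal{A}w^i(x)=\sum_{k\neq 0}\hat w^i(k)(-q(2\pi k/|\tau|))e^{i2\pi\langle k,x\rangle/|\tau|}$, where condition (\textbf{C2}) gives $|q(2\pi k/|\tau|)|\le c(1+|k|^2)$ and this is absorbed by $\sum|k|^2|\hat w^i(k)|<\infty$ from $w^i\in C_b^2$). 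The key identity, via Dynkin's formula / It\^o's formula for the L\'evy process, is
\begin{align*}
\int_0^{t}\mathcal{A}w^i(L_s)\,ds = w^i(L_t)-w^i(L_0)-N^i_t,
\end{align*}
where $N^i$ is a martingale (the compensated part of $w^i(L_\cdot)$). Because $w^i$ is bounded, $w^i(L_{nt})/n\to 0$ uniformly on compacts, so the LLN \eqref{eq1.18} reduces to showing $\frac1n\big(B_{nt}-N^i_{nt}\big)\to 0$ a.s., which for $B$ is the scaling of Brownian motion and for $N^i$ follows once we control $\langle N^i\rangle$ — this is exactly where hypothesis \eqref{eq1.17} enters, guaranteeing via the Fourier expansion and the spectral decay $(\mathrm{Re}\,q(2\pi k/|\tau|))^{-1}$ that $N^i$ has stationary-increment-like bounded rate, so $N^i_{nt}/n\to 0$. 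The constant $\bar V$ in \eqref{eqV} comes purely from averaging $B$ against $\varrho$ (the mean of the initial-displacement measure), as in Theorems \ref{tm1.1}--\ref{tm1.2}.

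For the CLT \eqref{eq1.19}, the plan is to apply \cite[Theorem VIII.2.17]{jacod}: the rescaled process $X^n_t:=n^{1/2}(X_{nt}/n-\bar Vt)$ is a semimartingale, and it suffices to show its characteristics $(B^n,C^n,\nu^n)$ (with respect to a fixed truncation function) converge in probability to those of the limiting Brownian motion $\process{W}$, namely $(0,Ct,0)$, together with a Lindeberg-type negligibility of jumps. Writing $X^n_t = n^{-1/2}(x_0-\bar V t\cdot 0) + n^{-1/2}\sum_i e_i\!\left(w^i(L_{nt})-w^i(L_0)-N^i_{nt}\right) + n^{-1/2}(B_{nt}-\text{(drift from }\bar V))$, the boundedness of $w^i$ kills the $w^i(L_{nt})$ terms at rate $n^{-1/2}$; they do not contribute to the limit characteristics. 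So the martingale part is $n^{-1/2}\big(B_{nt}-\sum_i e_i N^i_{nt}\big)$, a time-changed martingale. Its quadratic variation is computed from the known jump structure: the continuous part contributes $t\sigma_{ij}$ plus $t\int_{[0,\tau]}\langle\nabla w^i,c\nabla w^j\rangle\,dx/|\tau|$, and the jump part of $N^i$ contributes $t\int_{[0,\tau]}\int_{\R^{\bar d}}(w^i(y+x)-w^i(x))(w^j(y+x)-w^j(x))\nu(dy)\,dx/|\tau|$; these sum to $C$ of \eqref{eq1.12} with $\pi_\tau(dx_\tau)=dx_\tau/|\tau|$. Here I would invoke the law of large numbers for additive functionals of $\process{L}$ — since $L_\cdot \bmod \tau\ZZ^{\bar d}$ is the projection onto the torus and, under \eqref{eq1.17}, its unique invariant law is the normalized Lebesgue measure $dx_\tau/|\tau|$ (Fourier characters $e^{i2\pi\langle k,x\rangle/|\tau|}$ decay since $\mathrm{Re}\,q(2\pi k/|\tau|)>0$) — to get $n^{-1}\langle \text{martingale}\rangle_{nt}\to Ct$ in probability.

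The main obstacle I anticipate is twofold. First, justifying the Fourier-series manipulations rigorously under only $w^i\in C_b^2$: one must show that $\mathcal{A}w^i$, which a priori is an integro-differential expression, genuinely equals the term-by-term-differentiated Fourier series, and that the compensated martingale $N^i$ has finite, explicitly computable predictable quadratic variation whose long-run average is the $\pi_\tau$-integral in \eqref{eq1.12}; the bound $(1+(\mathrm{Re}\,q(2\pi k/|\tau|))^{-2})$ appearing in \eqref{eq1.20} is precisely what is needed to control the $L^2$-norms of the corrector series $\sum_{k\neq 0}\hat w^i(k)(\mathrm{Re}\,q(2\pi k/|\tau|))^{-1}e^{i2\pi\langle k,\cdot\rangle/|\tau|}$ that arises when removing the transient dependence on the initial law. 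Second, the passage from "for $\rho$-a.e.\ starting point, or $\rho(B)=0$" to "for any initial distribution" under the extra summability \eqref{eq1.20}: this requires showing that the exceptional set $B$ (coming from the a.s.\ convergence of ergodic averages along the torus projection) can be removed because the corrector is in fact globally bounded/continuous, which is exactly the quantitative content of \eqref{eq1.20}. I would handle these by working on the torus $\R^{\bar d}/\tau\ZZ^{\bar d}$ throughout, using \eqref{eq1.17} to get a spectral gap for the projected L\'evy semigroup, and only then lifting back to $\R^{\bar d}$; the Brownian part $B$ and the constant $\bar V$ are routine. The remaining steps — tightness of $\{X^n\}$, negligibility of jumps (jumps of $X^n$ are $O(n^{-1/2})$ since $w^i$ bounded, so trivially Lindeberg), and identification of the limit as the Gaussian process with covariance $C$ — then follow from \cite[Theorem VIII.2.17]{jacod} exactly as in the proof of Theorem \ref{tm1.1}.
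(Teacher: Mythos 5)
Your overall architecture coincides with the paper's: reduce everything to the additive functional $\int_0^{nt}\mathcal{A}^{b}w^{i}(L_s)ds$, compensate it by the Dynkin martingale, and prove the CLT via convergence of the modified semimartingale characteristics and \cite[Theorem VIII.2.17]{jacod}, the quadratic characteristic being an ergodic average of the projection $\process{L^{\tau}}$ on the torus, which is ergodic with invariant law $dx_\tau/|\tau|$ under \eqref{eq1.17} (Proposition \ref{p2.7}); the Birkhoff theorem then produces exactly the Lebesgue-null exceptional set $B$. Your LLN route is in fact a bit cleaner than the paper's: since $w^{i}\in C_b^{2}$ and (\textbf{C2}) make the predictable quadratic variation of the compensating martingale grow at a bounded rate, the martingale strong law gives $n^{-1}\int_0^{nt}\mathcal{A}^{b}w^{i}(L_s)ds\to0$ a.s.\ for \emph{every} initial distribution, with no exceptional set and no use of \eqref{eq1.17} — note that your attribution of this bounded rate to \eqref{eq1.17} and ``spectral decay'' is a misdiagnosis (the paper instead uses Birkhoff here as well, which is why its statement carries the set $B$ also for \eqref{eq1.18}).

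Two points need repair. First, \eqref{eq1.17} does \emph{not} yield a spectral gap for the projected semigroup: ${\rm Re}\,q(2\pi k/|\tau|)>0$ for each $k\neq0$ does not bound it away from zero (see Remark \ref{rm3}: $q(\xi)=2(1-\cos\xi)$, $\tau=2\pi$, where $\process{L^{\tau}}$ is ergodic but not strongly ergodic). Your main line survives because only ergodicity is needed, but any step genuinely invoking a gap would fail. Second, and more seriously, the upgrade to arbitrary initial distributions under \eqref{eq1.20} — the part of the theorem that goes beyond the Theorem \ref{tm1.1} scheme — is left at the level of a heuristic, and the heuristic points at the wrong object: there is no corrector to control, since $w^{i}$ itself compensates the additive functional, so the series $\sum_{k\neq0}\hat w^{i}(k)({\rm Re}\,q(2\pi k/|\tau|))^{-1}e^{i2\pi\langle k,\cdot\rangle/|\tau|}$ never enters. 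What must be shown is that the modified characteristic $\tilde C^{i,j,n}_t$, i.e.\ the time average of $\langle\nabla w^{i},c\nabla w^{j}\rangle+\int_{\R^{\bar d}}(w^{i}(\cdot+y)-w^{i})(w^{j}(\cdot+y)-w^{j})\nu(dy)$ along $\process{L^{\tau}}$, converges to $t\tilde C_{ij}$ in probability under every $\mathbb{P}^{\rho}$. The paper achieves this by an explicit $L^{2}(\mathbb{P}^{\rho})$ computation: expand the integrand in Fourier series, use $\mathbb{E}^{0}[e^{i2\pi\langle k,L_t\rangle/|\tau|}]=e^{-t\,q(2\pi k/|\tau|)}$ to evaluate $\mathbb{E}^{\rho}[\tilde C^{i,j,n}_t]$ and $\mathbb{E}^{\rho}[(\tilde C^{i,j,n}_t)^{2}]$, and observe that the time integrations produce the factors $({\rm Re}\,q(2\pi k/|\tau|))^{-1}$ and $({\rm Re}\,q(2\pi k/|\tau|))^{-2}$, so that \eqref{eq1.20} is precisely what licenses the interchanges of sums and integrals and the passage to the limit (cf.\ Proposition \ref{p2.6}). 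Your sketch would have to be replaced by this, or an equivalent second-moment argument, for the ``any initial distribution'' claim to be proved.
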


\begin{remark} \label{rm}
{\rm Note that when $${\rm Re}\,
q\left(\frac{2\pi
k}{|\tau|}\right)>0,\ \it{k}\in\ZZ^{\bar{d}}\setminus\{\rm{0}\}, \quad\textrm{and}\quad \liminf_{|{\it k}|\longrightarrow\infty}{\rm Re}\,
{\it q}\left(\frac{2\pi
{\it k}}{|\tau|}\right)>0,$$
the condition in \eqref{eq1.20} reduces to $$\sum_{k\in\ZZ^{\bar{d}}\setminus\{0\}}|k|^{2}|\hat{w}^{i}(k)<\infty,\quad
i=1,\ldots,d.$$ For example, this is the case when the function $ \xi\longmapsto\rm{Re}\,\it
{q}(\xi)$ is radial and the function $ |\xi|\longmapsto\rm{Re}\,\it
{q}(\xi)$ is nondecreasing.
}
\end{remark}

\begin{remark} \label{rm3}
{\rm A simple example  where Theorem \ref{tm1.1} is not applicable, while Theorem
\ref{tm1.3} gives an answer is as follows. Let $w(x)=\sin x$ and let
$\process{L}$ be a one-dimensional L\'evy process given by
L\'evy triplet of the form $(0,0,\delta_{-1}(dy)+\delta_1(dy))$.
Then, clearly, $$\hat{w}(k)=\left\{
                              \begin{array}{ll}
                                \frac{1}{2}, & k=-1,1, \\
                                0, & \textrm{otherwise},
                              \end{array}
                            \right.\quad\textrm{and}\quad q(\xi)=2(1-\cos\xi).$$
Further, note that $q(k)\neq0$ for all $k\in\ZZ\setminus\{0\}.$
Thus, the condition in $(\ref{eq1.20})$ $($and $(\ref{eq1.17})$$)$ holds true and consequently for any initial distribution of $\process{L}$,
$$\left\{n^{-\frac{1}{2}}\int_0^{nt}\mathcal{A}^{b}w(L_s)ds\right\}_{t\geq0}\stackrel{\hbox{\scriptsize{$\textrm{d}$}}}{\longrightarrow}\{W_t\}_{t\geq0},$$
where $\process{W}$ is a zero-drift Brownian motion
with the variance parameter $C=2(1-\cos1).$ Also, note that, according to  Proposition \ref{p2.7} below, $\process{L^{2\pi}}$ is ergodic  but obviously it is not strongly ergodic $($with respect to $dx_{2\pi}/2\pi$$)$.} \hfill $\Box$
\end{remark}

\begin{remark} \label{rm4}
{\rm In Theorem \ref{tm1.1} we implicitly assume (through  conditions (\textbf{C4}) and (\textbf{C5})) that the underlying process $\process{F^{\tau}}$ is strongly ergodic and conclude the limiting behaviors in $(\ref{eq1.10})$ and $(\ref{eq1.11})$ for any initial distribution of $\process{F}$. In Theorem \ref{tm1.3}  we implicitly assume $($through $(\ref{eq1.17})$$)$ only the ergodicity of $\process{L^{\tau}}$ and   the best we can conclude is that the limiting behaviors in $(\ref{eq1.18})$ and $(\ref{eq1.19})$ hold for any initial distribution of $\process{L}$ whose overall mass is contained in the complement of a certain Lebesgue measure zero set.
(See more discussions on the condition \ref{eq1.17} in Section \ref{sec-Comment}.)
If, in addition, we assume that $\process{L}$  satisfies $(\textbf{C5})$, then $\process{L^{\tau}}$ becomes strongly ergodic. Conditions  that certainly  ensure this are the integrability of $e^{-tq(\xi)}$,   $t>0$, and that either the Brownian  or jumping component is nondegenerate and possesses a strictly positive transition density function $($see \cite[Theorem 19.2 and Lemma 27.1]{sato-book}$)$. Note that for the jumping part to possess a transition density function it is necessary that $\nu(\R^{\bar{d}})=\infty.$
Very recently in \cite[Theorem 4.1]{rene-jian-coupling} it has been shown that  $\process{L^{\tau}}$ is strongly ergodic  if there exists some $t_0>0$ such that for every $t\geq t_0$, the transition function $p(t,x,dy)$ of $\process{L}$ has $($with respect to the Lebesgue measure$)$ an absolutely continuous component. According to \cite[Theorem 4.3]{rene-jian-coupling}, a sufficient condition that guarantees the existence of an absolutely continuous component of $p(t,x,dy)$, $t>0$, is that there exists $\varepsilon>0$, such that for
$$\nu_\varepsilon(B):=\left\{
                                                                                                                        \begin{array}{ll}
                                                                                                                          \nu(B), & \nu(\R^{\bar{d}})<\infty , \\
                                                                                                                          \nu(\{x\in B:|x|\geq\varepsilon\}), & \nu(\R^{\bar{d}})=\infty ,
                                                                                                                        \end{array}
                                                                                                                      \right.$$
either the $k$-fold convolution $\nu_\varepsilon^{\ast k}(dy)$, $k\geq1$, has an absolutely continuous component or there exist $\eta>0$ and $k\geq1$, such that \begin{align}\label{eq2.23}\inf_{x\in\R^{\bar{d}},\, |x|\leq\eta}\nu_\varepsilon^{\ast k}\wedge(\delta_x\ast\nu_\varepsilon^{\ast k})(\R^{\bar{d}})>0.\end{align} Here, for two probability measures $\rho(dx)$ and $\varrho(dx)$, $(\rho\wedge\varrho)(dx):=\rho(dx)-(\rho-\varrho)^{+}(dx),$ where  $(\rho-\varrho)^{\pm}(dx)$ is the Hahn-Jordan decomposition of the signed measure $(\rho-\varrho)(dx).$
Intuitively, condition $(\ref{eq2.23})$ ensures enough jump activity of  the underlying pure jump L\'evy process.}
\hfill $\Box$
\end{remark}


\section{Proofs of Theorems \ref{tm1.1} and \ref{tm1.2}} \label{sec-Proofs1}

\subsection{Preliminaries on Periodic Diffusions  with Jumps} \label{sec-Periodic}

\ \ \ \ We start this subsection with the following observation.
 Let $\process{M}$ be an $\R^{\bar{d}}$-valued, $\bar{d}\geq1$, Markov process with semigroup $\process{P}$ and let  $\Pi_{\tau}:\R^{\bar{d}}\longrightarrow[0,\tau]$ be the covering map, defined in the previous section. Recall that $\tau:=(\tau_1,\ldots,\tau_{\bar{d}})\in(0,\infty)^{\bar{d}}$ and $[0,\tau]:=[0,\tau_1]\times\cdots[0,\tau_{\bar{d}}].$
Next, denote by $\process{M^{\tau}}$  the process on $[0,\tau]$
obtained by the projection of the process $\process{M}$ with respect
to $\Pi_{\tau}(x)$, that is, $M_t^{\tau}:=\Pi_{\tau}(M_t),$
$t\geq0$. Then, if $\process{M}$ is ``$\tau$-periodic",
  $\process{M^{\tau}}$ is a Markov process.
More precisely,
  by assuming that
\begin{description}
 \item [(\textbf{A1})] $\process{P}$ preserves the class of all
  $\tau$-periodic  functions in $B_b(\R^{\bar{d}})$, that is, $x\longmapsto P_tf(x)$ is
  $\tau$-periodic for  all $t\geq0$ and all $\tau$-periodic $f\in B_b(\R^{\bar{d}})$,
\end{description}
 by \cite[Proposition 3.8.3]{vasili-book}, the
process $\process{M^{\tau}}$ is a
 Markov process on $([0,\tau],\mathcal{B}([0,\tau])$ with positivity preserving contraction semigroup $\process{P^{\tau}}$
on the space $(B_b([0,\tau]),||\cdot||_\infty)$ given by
$$P_t^{\tau}f_\tau(x_\tau):=\mathbb{E}^{x_\tau}_\tau[f_\tau(M_t^{\tau})]=\int_{[0,\tau]}f_\tau(y_\tau)\mathbb{P}^{x_\tau}_{\tau}(M^{\tau}_t\in dy_\tau),\quad t\geq0,\ x_\tau\in[0,\tau],\ f_\tau\in B_b([0,\tau]),$$ where
\begin{align}\label{eq2.1}\mathbb{P}^{x_\tau}_{\tau}(M^{\tau}_t\in dy_\tau):=\sum_{k\in\tau\ZZ^{\bar{d}}}\mathbb{P}^{x}(M_t\in dy+k),\quad t>0,\ x_\tau,y_\tau\in[0,\tau],\end{align}  and    $x$ and $y$ are arbitrary points  in
$\Pi^{-1}_{\tau}(\{x_\tau\})$ and $\Pi^{-1}_{\tau}(\{y_\tau\})$,
respectively. Note that  $\mathcal{B}([0,\tau])$ can be identified
with the sub $\sigma$-algebra of ``$\tau$-periodic" sets in
$\mathcal{B}(\R^{\bar{d}})$ (that is, the sets whose characteristic
function is $\tau$-periodic) by the relation
$$B=\bigcup_{k\in\tau\ZZ^{\bar{d}}}B_\tau+k,$$ where
$B_\tau\in\mathcal{B}([0,\tau])$ and $B\in\mathcal{B}(\R^{\bar{d}})$
is ``$\tau$-periodic".
 Further, since $[0,\tau]$ is compact, it is reasonable to expect
that $\process{M^{\tau}}$ is (strongly) ergodic. By assuming, in
addition, that
\begin{description}
 \item [(\textbf{A2})]  $\process{M}$ possesses a transition
density function $p(t,x,y)$, that is,
$$P_tf(x)=\int_{\R^{\bar{d}}}f(y)p(t,x,y)dy,\quad t>0,\ x\in\R^{\bar{d}},\ f\in B_b(\R^{\bar{d}}),$$
\item[(\textbf{A3})]
  $(x,y)\longmapsto p(t,x,y)$ is continuous and $p(t,x,y)>0$
for all $t>0$ and all $x,y\in \R^{\bar{d}}$,
\end{description}
then, clearly,  $\process{M^{\tau}}$ has a transition density function $ p_{\tau}(t,x_{\tau},y_\tau)$, which is, according to (\ref{eq2.1}), given by
$$p_{\tau}(t,x_{\tau},y_\tau)=\sum_{k\in\tau\ZZ^{\bar{d}}}p(t,x_\tau,y_\tau+k),\quad \ t>0,\ x_\tau,y_\tau\in[0,\tau],$$ and, by
 (\textbf{A3}),  it satisfies
$$\inf_{x_\tau,y_\tau\in[0,\tau]}p_{\tau}(t,x_\tau,y_\tau)>0,\quad t>0.$$
Thus,
 by \cite[the proof of Theorem
III.3.1]{benso-lions-book}, the process $\process{M^{\tau}}$
possesses a unique invariant probability measure $\pi_\tau(dx_\tau)$, such
that
\begin{align}\label{eq2.2}\sup\left\{\left|P^{\tau}_t1_{B_\tau}(x_\tau)-\pi_\tau(B_\tau)\right|:x_\tau\in[0,\tau],\
B_\tau\in\mathcal{B}([0,\tau])\right\}\leq \Lambda e^{-\lambda t}\end{align} for
all $t\geq0$ and some universal constants $\lambda>0$ and $\Lambda>0$. In
particular, $\process{M^{\tau}}$ is strongly ergodic. Let us remark
that, under (\textbf{A2}), (\textbf{A1}) holds true if the function
$x\longmapsto p(t,x,y+x)$ is $\tau$-periodic for all $t>0$ and all
$y\in\R^{\bar{d}}$.

Now, based on the above observations, we prove that a  diffusion with jumps which satisfies (\textbf{C4}) and (\textbf{C5}) also satisfies the conditions in (\textbf{A1}), (\textbf{A2}) and (\textbf{A3}).
\begin{proposition}\label{p2.1} Let $\process{F}$ be a diffusion with jumps with  L\'evy triplet $(b(x),c(x),\nu(x,dy))$ and transition density function $p(t,x,y)$. Then, $\process{F}$ satisfies the condition in $(\textbf{C4})$ if, and only if, the function $x\longmapsto p(t,x,x+y)$ is $\tau$-periodic for all $t>0$ and all $y\in\R^{\bar{d}}.$ \end{proposition}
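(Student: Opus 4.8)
The plan is to pass between the transition density and the symbol through the characteristic function $\phi_t(x,\xi):=\mathbb{E}^{x}\big[e^{i\langle\xi,F_t-x\rangle}\big]$. Two ingredients are needed. First, substituting $y=x+z$ in the density representation of $P_t$ (and using that $(\textbf{C3})$ makes $\process{F}$ conservative, so $z\mapsto p(t,x,x+z)$ is a probability density), one gets $\phi_t(x,\xi)=\int_{\R^{\bar d}}e^{i\langle\xi,z\rangle}p(t,x,x+z)\,dz$, i.e.\ $\xi\mapsto\phi_t(x,\xi)$ is the (inverse) Fourier transform of $z\mapsto p(t,x,x+z)$. Second, the symbol formula for a diffusion with jumps satisfying $(\textbf{C1})$--$(\textbf{C3})$ (see \cite{rene-holder,rene-bjorn-jian} and \cite[Theorem 2.7]{rene-wang-feller}): $q(x,\xi)=-\lim_{t\downarrow0}t^{-1}\big(\phi_t(x,\xi)-1\big)$ for all $x,\xi\in\R^{\bar d}$.

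With these, the ``if'' part is immediate: if $x\mapsto p(t,x,x+z)$ is $\tau$-periodic for every $t>0$ and $z\in\R^{\bar d}$, then $x\mapsto\phi_t(x,\xi)$ is $\tau$-periodic for every $t>0$ and $\xi\in\R^{\bar d}$, and since a pointwise limit of $\tau$-periodic functions is $\tau$-periodic, the symbol formula shows $x\mapsto q(x,\xi)$ is $\tau$-periodic, which is precisely $(\textbf{C4})$.

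For the ``only if'' part, assume $(\textbf{C4})$, fix $k\in\tau\ZZ^{\bar d}$, and let $T_k$ be the translation $T_kf:=f(\cdot+k)$, an isometric isomorphism of $C_\infty(\R^{\bar d})$ mapping $C_c^\infty(\R^{\bar d})$ onto itself. Then $\{T_{-k}P_tT_k\}_{t\geq0}$ is again a Feller semigroup, and for $f\in C_c^\infty(\R^{\bar d})$ its generator equals $T_{-k}\mathcal{A}^\infty(T_kf)$; using the pseudo-differential representation \eqref{eq1.8}, the identity $\widehat{T_kf}(\xi)=e^{i\langle\xi,k\rangle}\hat f(\xi)$, and the $\tau$-periodicity of $x\mapsto q(x,\xi)$, a one-line computation gives $T_{-k}\mathcal{A}^\infty(T_kf)(x)=-\int_{\R^{\bar d}}q(x-k,\xi)e^{i\langle x,\xi\rangle}\hat f(\xi)\,d\xi=\mathcal{A}^\infty f(x)$. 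Equivalently, by \cite[Lemma 3.1 and Theorem 3.5]{rene-holder}, $\{F_t+k\}_{t\geq0}$ under $\mathbb{P}^{x}$ and $\{F_t\}_{t\geq0}$ under $\mathbb{P}^{x+k}$ are two semimartingales with the same characteristics and the same initial law $\delta_{x+k}$, i.e.\ two solutions of the martingale problem for $\mathcal{A}^\infty|_{C_c^\infty(\R^{\bar d})}$ started from $x+k$. Invoking uniqueness of that problem, these two laws coincide; hence $T_kP_tf=P_tT_kf$ first for $f\in C_\infty(\R^{\bar d})$ and then, by a monotone-class argument, for all $f\in B_b(\R^{\bar d})$. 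Testing on nonnegative $f$ via the density representation,
\[
\int_{\R^{\bar d}}f(y+k)\,p(t,x,y)\,dy=\int_{\R^{\bar d}}f(y)\,p(t,x+k,y)\,dy=\int_{\R^{\bar d}}f(y+k)\,p(t,x+k,y+k)\,dy,
\]
so $p(t,x,y+k)=p(t,x+k,y+k)$ for Lebesgue-a.e.\ $y$, hence for all $y$ by the assumed joint continuity of $(x,y)\mapsto p(t,x,y)$; taking $y=x+z$ this is exactly the $\tau$-periodicity of $x\mapsto p(t,x,x+z)$.

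The only delicate point is the uniqueness used in the last paragraph — that a diffusion with jumps is determined by its symbol, equivalently that the associated martingale problem (or the Kolmogorov equation solved by $p$) is well posed. I would state this as a lemma and cite an appropriate general criterion; it holds without further ado in the leading examples (non-degenerate H\"older diffusions and stable-like processes, cf.\ Remark \ref{rm1} and \cite{sheu,bass-stablelike,rene-wang-feller}), where $p$ is the fundamental solution of the Kolmogorov equation and one may appeal directly to uniqueness of fundamental solutions. Everything else — the symbol formula, the Fourier computation for $T_{-k}\mathcal{A}^\infty T_k$, and the monotone-class and continuity steps — is routine.
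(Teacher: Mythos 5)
Your ``if'' half is fine and is essentially the same computation the paper delegates to the proof of \cite[Theorem 4.5.21]{jacobI}: the characteristic function $\phi_t(x,\xi)=\mathbb{E}^x[e^{i\langle\xi,F_t-x\rangle}]$ is the Fourier transform of $z\mapsto p(t,x,x+z)$, and the probabilistic symbol formula $q(x,\xi)=-\lim_{t\downarrow0}t^{-1}(\phi_t(x,\xi)-1)$, valid under (\textbf{C1})--(\textbf{C3}), transfers $\tau$-periodicity from the density to the symbol. No objection there (apart from the cosmetic point that joint continuity of $p$ is not part of the proposition's hypotheses, only of (\textbf{C5})).

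The ``only if'' half has a genuine gap, and it is exactly the point you flag yourself: your argument identifies the translated semigroup $T_{-k}P_tT_k$ with $P_t$ by saying that both solve the martingale problem for $\mathcal{A}^{\infty}|_{C_c^{\infty}(\R^{\bar{d}})}$ and then ``invoking uniqueness of that problem.'' But well-posedness of that martingale problem (equivalently, that $C_c^{\infty}(\R^{\bar{d}})$ is a core, i.e.\ that the process is determined by its symbol) is \emph{not} among the hypotheses of the proposition --- only (\textbf{C1})--(\textbf{C3}) and the existence of a transition density are assumed --- and there is no general criterion guaranteeing it at this level of generality; it is precisely the kind of extra regularity (non-degenerate H\"older diffusions, stable-like symbols, etc.) that Remark \ref{rm1} needs for other purposes. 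So your proof establishes the statement only under an additional uniqueness/core assumption, not as stated. The paper argues differently: by \cite[Theorem 3.5]{rene-holder} and \cite[Theorem 3.33]{cinlar} the given process $\process{F}$ itself can be realized, on an enlargement of the stochastic basis, as a solution of the SDE \eqref{eq4.3} whose coefficients are explicit functions of the L\'evy triplet; since the triplet is $\tau$-periodic, translating the starting point by $\tau$ translates the solution, which yields $\mathbb{P}^{x+\tau}(F_t\in dy)=\mathbb{P}^{x}(F_t+\tau\in dy)$ directly, without postulating well-posedness of the martingale problem. If you want to keep your semigroup-conjugation route, you must either add the core/uniqueness hypothesis explicitly or replace that step by an argument, like the paper's SDE representation, that works with the given process rather than with an abstractly ``associated'' one.
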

\begin{proof}
The sufficiency follows directly from  \cite[the proof of Theorem 4.5.21]{jacobI}. To prove the necessity, first recall that  there exists a suitable enlargement of the stochastic basis $(\Omega,\mathcal{F},\{\mathbb{P}^{x}\}_{x\in\R^{\bar{d}}},$ $\process{\mathcal{F}},\process{\theta})$, say $(\widetilde{\Omega},\mathcal{\widetilde{F}},\{\mathbb{\widetilde{P}}^{x}\}_{x\in\R^{\bar{d}}},\process{\mathcal{\widetilde{F}}},\process{\widetilde{\theta}})$, on which $\process{F}$ is
the solution to the following stochastic differential equation \newpage
\begin{align}\label{eq4.3}F_t=&x+\int_0^{t}b(F_{s-})ds+\int_0^{t}c(F_{s-})d\widetilde{W}_s\nonumber\\&+\int_0^{t}\int_{\R\setminus\{0\}}k(F_{s-},z)1_{\{u:|k(F_{s-},u)|\leq1\}}(z)\left(\widetilde{\mu}(\cdot,ds,dz)-ds\widetilde{N}(dz)\right)\nonumber\\&+\int_0^{t}\int_{\R\setminus\{0\}}k(F_{s-},z)1_{\{u:|k(F_{s-},u)|>1\}}(z)\,\widetilde{\mu}(\cdot,ds,dz),
\end{align}
where $\process{\widetilde{W}}$ is a $\bar{d}$-dimensional Brownian motion, $\widetilde{\mu}(\omega,ds,dz)$
is a Poisson random measure with compensator (dual
predictable projection) $ds\widetilde{N}(dz)$ and $k:\R^{\bar{d}}\times\R\setminus\{0\}\longrightarrow\R^{\bar{d}}$ is a Borel measurable function  satisfying
$$\widetilde{\mu}(\omega,ds,k(F_{s-}(\omega),\cdot)\in dy)=\sum_{s: \Delta F_s(\omega)\neq0}\delta_{(s,\Delta F_s(\omega))}(ds,dy),$$
$$ds\widetilde{N}(k(F_{s-}(\omega),\cdot)\in dy)=ds\,\nu(F_{s-}(\omega),dy)$$ (see \cite[Theorem 3.5]{rene-holder} and \cite[Theorem 3.33]{cinlar}).
Further,  $\process{F}$ has the same transition function on the starting and enlarged stochastic basis. Thus, because of the $\tau$-periodicity of $(b(x),c(x),\nu(x,dy))$, directly from (\ref{eq4.3}) we read that $\mathbb{P}^{x+\tau}(F_t\in dy)=\mathbb{P}^{x}(F_t+\tau\in dy)$ for all $t\geq0$ and all $x\in\R^{\bar{d}}$, which proves the assertion.
\end{proof}

Since we mainly deal with $\tau$-periodic functions,  we need to extend the operator $(\mathcal{A}^{\infty}|_{C_c^{\infty}(\R^{\bar{d}})},$ $C_c^{\infty}(\R^{\bar{d}}))$ on a larger domain which contains a certain class of $\tau$-periodic functions.
Recall  that  every Feller semigroup $\process{P}$ can be
uniquely extended to $B_b(\R^{\bar{d}})$.  We denote this
extension again by $\process{P}$.
\begin{proposition}\label{p2.3} Let $\process{F}$ be a diffusion with jumps with $B_b$-generator $(\mathcal{A}^{b},\mathcal{D}_{\mathcal{A}^{b}})$ which satisfies the
condition in $(\textbf{C4})$. Then,
$$\{f\in C_b^{2}(\R^{\bar{d}}):f(x)\ \textrm{is}\
\tau\textrm{-periodic}\}\subseteq\mathcal{D}_{\mathcal{A}^{b}}$$ and,
on this class of functions,
$\mathcal{A}^{b}$ has the representation in \eqref{eq1.5}.
\end{proposition}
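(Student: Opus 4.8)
The plan is to combine It\^o's formula for the semimartingale $\process{F}$ with a ``strong continuity at $0$'' estimate for the semigroup which is available precisely because of the periodicity hypothesis $(\textbf{C4})$. Fix a $\tau$-periodic $f\in C_b^2(\R^{\bar d})$ and write $\mathcal{A}f$ for the right-hand side of \eqref{eq1.5}. First I would check that $\mathcal{A}f$ is a well-defined, bounded, $\tau$-periodic and \emph{continuous} function: boundedness follows from the boundedness of the coefficients $(b,c,\nu)$ — equivalent to $(\textbf{C2})$ by \cite[Lemma 2.1]{rene-holder} — after splitting the integral in \eqref{eq1.5} at $|y|=1$ and Taylor expanding $f(x+y)-f(x)-\langle y,\nabla f(x)\rangle$ to second order on $\{|y|\le1\}$; $\tau$-periodicity of $\mathcal{A}f$ is immediate since $b$, $c$, $\nu$ (by $(\textbf{C4})$) and $f$ are $\tau$-periodic; and continuity, which by periodicity need only be checked on the compact set $[0,\tau]$, follows by a standard approximation of $f$ — on a ball large compared with $\tau$ — by functions in $C_c^\infty(\R^{\bar d})$, whose images under $\mathcal{A}^\infty$ are continuous by the Feller property (as $\mathcal{A}^\infty$ maps into $C_\infty(\R^{\bar d})$), the non-local error being controlled uniformly in $x\in[0,\tau]$.

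Next, using the SDE representation \eqref{eq4.3} — valid on a suitable enlargement of the stochastic basis, exactly as in the proof of Proposition \ref{p2.1}, via \cite[Theorem 3.5]{rene-holder} and \cite[Theorem 3.33]{cinlar} — I would apply It\^o's formula for semimartingales to $f(F_t)$ to get
\[
f(F_t)=f(F_0)+\int_0^t\mathcal{A}f(F_{s-})\,ds+N_t,
\]
with $N$ a local martingale and $N_0=0$. Since $f$ and $\mathcal{A}f$ are bounded, $|N_t|\le 2\|f\|_\infty+t\,\|\mathcal{A}f\|_\infty$ is bounded on every interval $[0,T]$, so $N$ is a genuine martingale; taking $\mathbb{E}^x[\,\cdot\,]$, applying Fubini (the integrand is bounded), and using that $F_{s-}=F_s$ $\mathbb{P}^x$-a.s.\ for fixed $s$, we obtain
\[
P_tf(x)-f(x)=\int_0^t P_s(\mathcal{A}f)(x)\,ds,\qquad t\ge0,\ x\in\R^{\bar d},
\]
where $P_s$ is the $B_b(\R^{\bar d})$-extension of the Feller semigroup. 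In particular $\|P_tf-f\|_\infty\le t\,\|\mathcal{A}f\|_\infty$ for every $\tau$-periodic $f\in C_b^2(\R^{\bar d})$, and
\[
\|\tfrac1t(P_tf-f)-\mathcal{A}f\|_\infty\le\sup_{0\le s\le t}\|P_s(\mathcal{A}f)-\mathcal{A}f\|_\infty .
\]

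So it remains to show that the right-hand side tends to $0$ as $t\downarrow0$. Put $g:=\mathcal{A}f$, which is bounded, continuous and $\tau$-periodic, hence uniformly continuous. Mollifying $g$ with a sequence $\{\phi_n\}$ of mollifiers supported in shrinking balls yields $\tau$-periodic $g_n:=g\ast\phi_n\in C_b^\infty(\R^{\bar d})$ with $\|g_n-g\|_\infty\to0$; since each $g_n$ is $\tau$-periodic and $C_b^2$, the displayed identity and the contractivity of $P_s$ give
\[
\|P_sg-g\|_\infty\le 2\|g-g_n\|_\infty+s\,\|\mathcal{A}g_n\|_\infty ,
\]
so, choosing $n$ with $2\|g-g_n\|_\infty<\varepsilon/2$ and then $t$ with $t\,\|\mathcal{A}g_n\|_\infty<\varepsilon/2$, one gets $\sup_{0\le s\le t}\|P_sg-g\|_\infty\le\varepsilon$. (Alternatively, $|P_sg(x)-g(x)|\le\mathbb{E}^x|g(F_s)-g(F_0)|$, and one uses the uniform continuity of $g$ together with the maximal inequality for Feller processes under $(\textbf{C2})$, $\sup_x\mathbb{P}^x(\sup_{r\le s}|F_r-x|>\delta)\le c\,s(1+\delta^{-2})$.) Hence $\lim_{t\downarrow0}t^{-1}(P_tf-f)=\mathcal{A}f$ in $\|\cdot\|_\infty$, i.e.\ $f\in\mathcal{D}_{\mathcal{A}^b}$ with $\mathcal{A}^bf=\mathcal{A}f$ of the form \eqref{eq1.5}.

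The role of $(\textbf{C4})$ is precisely to force $\mathcal{A}f$ to be $\tau$-periodic — hence uniformly continuous and well approximable by smooth periodic functions — which is what makes $t^{-1}\int_0^tP_s(\mathcal{A}f)\,ds$ converge to $\mathcal{A}f$ \emph{uniformly} (this fails for a general $f\in C_b^2(\R^{\bar d})$). The one genuinely delicate point is therefore the continuity of $\mathcal{A}f$ for merely $C_b^2$ periodic $f$ — i.e.\ that the integro-differential operator \eqref{eq1.5} maps $\tau$-periodic $C_b^2$ functions into continuous functions — which needs the approximation by $C_c^\infty$ functions to be controlled uniformly over the torus; the remaining steps are routine applications of It\^o's formula and elementary semigroup estimates.
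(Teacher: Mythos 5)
Your overall strategy is the same as the paper's: establish the Dynkin-type identity $P_tf-f=\int_0^tP_s(\mathcal{A}f)\,ds$ for $f\in C_b^2(\R^{\bar d})$ (the paper simply cites \cite[Corollary 3.6]{rene-holder} where you rederive it via It\^o's formula on the SDE representation \eqref{eq4.3}; both are fine), and then use $\tau$-periodicity to upgrade $P_s(\mathcal{A}f)\to\mathcal{A}f$ to uniform convergence. Your last step is a nice variant: instead of invoking the locally uniform strong continuity of Feller semigroups on $C_b$ (\cite[Lemma 4.8.7]{jacobI}, which is what the paper uses after restricting the supremum to $[0,\tau]$), you mollify $g=\mathcal{A}f$ into $\tau$-periodic $g_n\in C_b^\infty$ and bootstrap the identity applied to $g_n$; this even avoids having to check that $P_t$ preserves $\tau$-periodic functions.

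The one genuine gap is exactly at the point you yourself flag: the continuity of $\mathcal{A}f$ for a merely $C_b^2$, $\tau$-periodic $f$. Your sketch replaces $f$ by compactly supported approximants and claims that ``the non-local error is controlled uniformly in $x\in[0,\tau]$''; concretely this amounts to $\sup_{x\in[0,\tau]}\nu\bigl(x,\{y:|y|>R\}\bigr)\to0$ as $R\to\infty$. That does not follow from (\textbf{C2}) (boundedness of the coefficients) alone: boundedness gives $\sup_x\nu(x,\{|y|>1\})<\infty$ and pointwise decay of the tails for each fixed $x$, but not equi-tightness of the family $\{\nu(x,\cdot)\}_{x\in[0,\tau]}$, and without some continuity of $x\mapsto\nu(x,\cdot)$ (e.g.\ continuity of $x\mapsto q(x,\xi)$, or of $x\mapsto\int h\,d\nu(x,\cdot)$ for continuous $h$ vanishing near the origin) a Dini-type argument is not available. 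This is precisely the nontrivial input the paper takes from \cite[Remark 4.5]{rene-conserv}, namely that the operator \eqref{eq1.5} maps $C_b^2(\R^{\bar d})$ into $C_b(\R^{\bar d})$; the needed regularity of the kernel in $x$ is a consequence of the Feller property together with (\textbf{C1})--(\textbf{C3}), but it requires its own argument (or citation) and cannot be waved through as a uniform error bound. Note also that continuity of $\mathcal{A}f$ is indispensable for your mollification step ($\|g\ast\phi_n-g\|_\infty\to0$ needs $g$ uniformly continuous), so this point cannot be bypassed in your scheme. With that step either cited or properly proved, the rest of your argument is correct.
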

\begin{proof}
Let $\mathcal{L}:C_b^{2}(\R^{\bar{d}})\longrightarrow B_b(\R^{\bar{d}})$ be defined by
the relation  in (\ref{eq1.5}). Observe that
actually  $\mathcal{L}:C_b^{2}(\R^{\bar{d}})\longrightarrow C_b(\R^{\bar{d}})$ (see
\cite[Remark 4.5]{rene-conserv}).   Next, by \cite[Corollary
3.6]{rene-holder}, we have
$$\mathbb{E}^{x}\left[f(F_t)-\int_0^{t}\mathcal{L}f(F_s)ds\right]=f(x),\quad
x\in\R^{\bar{d}},\ f\in C^{2}_b(\R^{\bar{d}}).$$ Now, let  $f\in
C^{2}_b(\R^{\bar{d}})$ be $\tau$-periodic. Then, since $x\longmapsto
\mathcal{L}f(x)$ is also $\tau$-periodic, we have
\begin{align*}\lim_{t\longrightarrow0}\left|\left|\frac{P_tf-f}{t}-\mathcal{L}f\right|\right|_{\infty}&=\lim_{t\longrightarrow0}\left|\left|\frac{1}{t}\int_0^{t}(P_s\mathcal{L}f-\mathcal{L}f)ds\right|\right|_\infty\\&
\leq\lim_{t\longrightarrow0}\frac{1}{t}\int_0^{t}\sup_{x\in[0,\tau]}|P_s\mathcal{L}f(x)-\mathcal{L}f(x)|ds\\&=0,\end{align*}
where in the final step we  applied \cite[Lemma 4.8.7]{jacobI}.
\end{proof}

In the following proposition we derive a connection between   the $B_b$-generators $(\mathcal{A}^{b},\mathcal{D}_{\mathcal{A}^{b}})$ and $(\mathcal{A}_{\tau}^{b},\mathcal{D}_{\mathcal{A}_\tau^{b}})$ of $\process{F}$ and $\process{F^{\tau}}$, respectively. Recall that for a $\tau$-periodic function $f(x)$, $f_\tau(x_\tau)$ denotes its restriction to $[0,\tau].$
\begin{proposition}\label{p2.4} Let $\process{F}$ be a diffusion with jumps  satisfying the condition in
$(\textbf{C4})$ and let
$\process{F^{\tau}}$ be the projection of $\process{F}$ on
$[0,\tau]$ with respect to $\Pi_\tau(x)$.  Further, let
$(\mathcal{A}^{b},\mathcal{D}_{\mathcal{A}^{b}})$ and
$(\mathcal{A}_{\tau}^{b},\mathcal{D}_{\mathcal{A}_\tau^{b}})$ be the
$B_b$-generators of $\process{F}$ and $\process{F^{\tau}}$,
respectively. Then, we have $$\{f_\tau:f\in C_b^{2}(\R^{\bar{d}})\
\textrm{and}\ f(x)\ \textrm{is}\
\tau\textrm{-periodic}\}\subseteq\mathcal{D}_{\mathcal{A}_\tau^{b}},$$
and, on this set,
$\mathcal{A}_\tau^{b}f_\tau=\left(\mathcal{A}^{b}f\right)_\tau.$
\end{proposition}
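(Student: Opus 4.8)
The plan is to reduce the statement to a single intertwining identity between the semigroups $\process{P}$ and $\process{P^{\tau}}$ and then to push it through the definitions of the two $B_b$-generators. Fix a $\tau$-periodic $f\in C_b^{2}(\R^{\bar{d}})$; since $[0,\tau]$ is compact, $f_\tau\in B_b([0,\tau])$, so $P_t^{\tau}f_\tau$ is well defined. First I would establish that
\[
P_t^{\tau}f_\tau(x_\tau)=(P_tf)_\tau(x_\tau),\qquad t\geq0,\ x_\tau\in[0,\tau].
\]
Here $(P_tf)_\tau$ makes sense because $P_tf$ is again $\tau$-periodic: by the relation $\mathbb{P}^{x+k}(F_t\in\cdot)=\mathbb{P}^{x}(F_t+k\in\cdot)$, $k\in\tau\ZZ^{\bar{d}}$ — obtained from $(\textbf{C4})$ alone in the proof of Proposition \ref{p2.1} via the stochastic differential equation \eqref{eq4.3} — one gets $P_tf(x+k)=\mathbb{E}^{x+k}[f(F_t)]=\mathbb{E}^{x}[f(F_t+k)]=\mathbb{E}^{x}[f(F_t)]=P_tf(x)$, so $P_tf$ is $\tau$-periodic and its restriction does not depend on the chosen representative. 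For the identity itself, starting from the definition of $P_t^{\tau}$ and \eqref{eq2.1}, and writing $x$ for an arbitrary point of $\Pi_\tau^{-1}(\{x_\tau\})$,
\[
P_t^{\tau}f_\tau(x_\tau)=\sum_{k\in\tau\ZZ^{\bar{d}}}\int_{[0,\tau]}f_\tau(y_\tau)\,\mathbb{P}^{x}(M_t\in dy+k)=\sum_{k\in\tau\ZZ^{\bar{d}}}\int_{[0,\tau]+k}f(z)\,\mathbb{P}^{x}(M_t\in dz)=P_tf(x),
\]
where I used $f_\tau(y_\tau)=f(y)=f(y+k)$ by $\tau$-periodicity, Tonelli's theorem to interchange the nonnegative sum with the integral (legitimate since $f$ is bounded and $\mathbb{P}^{x}(M_t\in\cdot)$ is a probability measure), and the conservativeness of $\process{F}$ (condition $(\textbf{C3})$), so that the translates $[0,\tau]+k$, $k\in\tau\ZZ^{\bar{d}}$, of a half-open fundamental domain exhaust the mass of $\mathbb{P}^{x}(M_t\in\cdot)$.

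With this identity in hand the rest is routine. By Proposition \ref{p2.3}, $f\in\mathcal{D}_{\mathcal{A}^{b}}$ with $\mathcal{A}^{b}f=\mathcal{L}f$ (the operator in \eqref{eq1.5}), which is again $\tau$-periodic, and $t^{-1}(P_tf-f)\to\mathcal{A}^{b}f$ in $\|\cdot\|_{\infty}$ as $t\to0$. Dividing the identity above by $t$ yields $t^{-1}(P_t^{\tau}f_\tau-f_\tau)=\big(t^{-1}(P_tf-f)\big)_\tau$, and since restriction to $[0,\tau]$ does not change the supremum norm of a $\tau$-periodic function — that is, $\|g_\tau\|_{\infty,[0,\tau]}=\|g\|_{\infty,\R^{\bar{d}}}$ — it follows that $t^{-1}(P_t^{\tau}f_\tau-f_\tau)\to(\mathcal{A}^{b}f)_\tau$ in $\|\cdot\|_{\infty,[0,\tau]}$ as $t\to0$. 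Hence $f_\tau\in\mathcal{D}_{\mathcal{A}_\tau^{b}}$ and $\mathcal{A}_\tau^{b}f_\tau=(\mathcal{A}^{b}f)_\tau$, which is exactly the assertion.

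The one place that needs care is the book-keeping in the intertwining identity: that summing the integrals over the translated fundamental domains reassembles $\int_{\R^{\bar{d}}}f\,d\mathbb{P}^{x}(M_t\in\cdot)$ with no mass lost or double-counted. I would handle this by working with a genuine half-open fundamental domain for $\tau\ZZ^{\bar{d}}$ in place of the closed box $[0,\tau]$, invoking Tonelli for the sum--integral interchange and $(\textbf{C3})$ for conservativeness. Everything else follows directly from Propositions \ref{p2.1} and \ref{p2.3} and the definitions of $\process{P^{\tau}}$ and of the $B_b$-generators.
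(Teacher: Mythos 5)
Your proof is correct and takes essentially the same route as the paper: the paper's terse step (``This and $\tau$-periodicity automatically yield\dots'') is exactly the intertwining identity $P_t^{\tau}f_\tau=(P_tf)_\tau$ combined with Proposition \ref{p2.3} and the equality of sup-norms of a $\tau$-periodic function over $[0,\tau]$ and over $\R^{\bar{d}}$, which you simply spell out in detail (including the fundamental-domain bookkeeping and the use of (\textbf{C3}) and (\textbf{C4}) that the paper leaves implicit).
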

\begin{proof}
First,  according to Proposition \ref{p2.3}, $\{f\in C_b^{2}(\R^{\bar{d}}):f(x)\
\textrm{is}\
\tau\textrm{-periodic}\}\subseteq\mathcal{D}_{\mathcal{A}^{b}}$. This and $\tau$-periodicity automatically yield that for
 any $\tau$-periodic $f\in
C^{2}_b(\R^{\bar{d}})$, we have
$$\lim_{t\longrightarrow0}\left|\left|\frac{P^{\tau}_tf_\tau-f_\tau}{t}-\left(\mathcal{A}^{b}f\right)_\tau\right|\right|_{\infty}=\lim_{t\longrightarrow0}\left|\left|\frac{P_tf-f}{t}-\mathcal{A}^{b}f\right|\right|_{\infty}=0,$$
which proves the desired result.
\end{proof}

\subsection{Proof of Theorem \ref{tm1.1}} \label{sec-prooftm1.1}

\ \ \ \ Before proving the main result of this subsection (Theorem \ref{tm1.1}), let us  recall the notion of characteristics of a semimartingale
(see \cite{jacod}). Let
$(\Omega,\mathcal{F},\process{\mathcal{F}},\mathbb{P},\process{S})$, denoted by
$\process{S}$ in the sequel, be a $d$-dimensional semimatingale and
let $h:\R^{d}\longrightarrow\R^{d}$ be a truncation function (that is, a
continuous bounded function such that $h(x)=x$ in a neighborhood of
the origin).
 We  define two processes
$$\check{S}(h)_t:=\sum_{s\leq t}(\Delta S_s-h(\Delta S_s))\quad
\textrm{and} \quad S(h)_t:=S_t-\check{S}(h)_t,$$ where the process
$\process{\Delta S}$ is defined by $\Delta S_t:=S_t-S_{t-}$ and
$\Delta S_0:=S_0$. The process $\process{S(h)}$ is a \emph{special
semimartingale}, that is, it admits a unique decomposition
\begin{align}\label{eq:2.7}S(h)_t=S_0+M(h)_t+B(h)_t,\end{align} where $\process{M(h)}$ is a local
martingale and $\process{B(h)}$ is a predictable process of bounded
variation.
\begin{definition}
 Let $\process{S}$  be a
semimartingale and let $h:\R^{d}\longrightarrow\R^{d}$ be a truncation
function. Furthermore, let $\process{B(h)}$  be the predictable
process
 of bounded variation appearing in \eqref{eq:2.7},  let $N(\omega,ds,dy)$ be the
compensator of the jump measure
$$\mu(\omega,ds,dy):=\sum_{s:\Delta S_s(\omega)\neq 0}\delta_{(s,\Delta S_s(\omega))}(ds,dy)$$ of the process
$\process{S}$ and let $\process{C}=\{(C_t^{ij})_{1\leq i,j\leq d})\}_{t\geq0}$ be the quadratic co-variation
process for $\process{S^{c}}$ $($continuous martingale part of
$\process{S}$$)$, that is,
$$C^{ij}_t=\langle S^{i,c}_t,S^{j,c}_t\rangle.$$  Then $(B,C,N)$ is called
the \emph{characteristics} of the semimartingale $\process{S}$
 $($relative to $h(x)$$)$. If we put $\tilde{C}(h)^{ij}_t:=\langle
 M(h)^{i}_t,M(h)^{j}_t\rangle$, $i,j=1,\ldots,d$, where $\process{M(h)}$ is the local martingale
 appearing in \eqref{eq:2.7}, then $(B,\tilde{C},N)$ is called the \emph{modified
 characteristics} of the semimartingale $\process{S}$ $($relative to $h(x)$$)$.
\end{definition}

Now, we prove Theorem \ref{tm1.1}.

\begin{proof}[Proof of Theorem \ref{tm1.1}] The proof proceeds  in three steps.

\textbf{Step 1.} In the first step, we explain  our strategy of the
proof. First, note that, because of  the independence of
$\process{F}$ and $\process{B}$, \cite[Theorem 36.5]{sato-book} and Proposition \ref{p2.3}, in order to prove the relation in  (\ref{eq1.10}), it suffices to prove that
\begin{align}\label{eq4}
n^{-1}\int_0^{nt}\mathcal{A}^{b}w^{i}(F_s)ds\stackrel{\hbox{\scriptsize{$\mathbb{P}^{\rho}\,
\textrm{-}\, \textrm{a.s.}$}}}{\xrightarrow{\hspace*{1cm}}} 0
\end{align}
 for
all $t\geq0$, all $i=1,\ldots,d$ and all initial distributions
$\rho(dx)$ of $\process{F}$. Recall that
$w^{1},\ldots,w^{d}\in C^{2}_b(\R^{\bar{d}})$ are $\tau$-periodic.
Next, due to the $\tau$-periodicity of the L\'evy triplet of $\process{F}$ (which implies that $\mathcal{A}^{b}f(x)$ is $\tau$-periodic for any $\tau$-periodic $f\in C^{2}_b(\R^{\bar{d}})$) and  by noting that for any $\tau$-periodic  $f:\R^{\bar{d}}\longrightarrow\R$, $f(F_t)=f(F_t^{\tau})$, $t\geq0$,  we observe that we can replace $\process{F}$ by $\process{F^{\tau}}$ in \eqref{eq4}, which is, by \eqref{eq2.2}, strongly ergodic. Hence,  the limiting behavior
in (\ref{eq4})  will simply follow by employing Proposition \ref{p2.4} and the Birkhoff ergodic theorem.

Similarly as above, because of the
independence of $\process{F}$ and $\process{B}$ and the scaling
property of $\process{B}$ (that is,
$\{B_t\}_{t\geq0}\stackrel{\hbox{\scriptsize{$\textrm{d}$}}}{=}\{c^{-1/2}B_{ct}\}_{t\geq0}$
for all $c>0$), we conclude that   in order to prove the limiting behavior
in (\ref{eq1.11}),   it suffices to prove that for any initial
distribution $\rho(dx)$ of $\process{F}$,
\begin{align}\label{2.8}\left\{n^{-\frac{1}{2}}\int_0^{nt}v(F_s)ds\right\}_{t\geq0}\stackrel{\hbox{\scriptsize{$\textrm{d}$}}}{\longrightarrow}\process{\tilde{W}}\end{align}
under $\mathbb{P}^{\rho}(d\omega_V)$, where
$v(x)=(\mathcal{A}^{b}w^{1}(x),\ldots,\mathcal{A}^{b}w^{d}(x))$ and
$\process{\tilde{W}}$ is a zero-drift Brownian motion determined by
a covariance matrix of the form $\tilde{C}:=C-\Sigma$, where
the matrices $C$ and $\Sigma$ are given in (\ref{eq1.12})  and \eqref{eq1.2}, respectively.
Now, according to
\cite[Theorem VIII.2.17]{jacod}, \eqref{2.8} will follow if we prove   the convergence (in probability) of the  modified characteristics of $\left\{n^{-1/2}\int_0^{nt}v(F_s)ds\right\}_{t\geq0}$ to the modified characteristics of $\process{\tilde{W}}$. Accordingly,  we explicitly compute the modified characteristics of $\left\{n^{-1/2}\int_0^{nt}v(F_s)ds\right\}_{t\geq0}$ (in terms of the L\'evy triplet of $\process{F}$) and, again, because of the $\tau$-periodicity of the L\'evy triplet of $\process{F}$, we  switch from $\process{F}$ to $\process{F^{\tau}}$ and apply the Birkhoff ergodic theorem, which concludes the proof of Theorem \ref{tm1.1}.

\textbf{Step 2.} In the second step, we prove the limiting behavior
in (\ref{eq4}). First, observe that, by Proposition \ref{p2.4}, we have
$$\mathcal{A}^{b}w^{i}(F_t)=\mathcal{A}^{b}w^{i}(F^{\tau}_t)=\left(\mathcal{A}^{b}w^{i}\right)_\tau(F^{\tau}_t)=\mathcal{A}_\tau^{b}w^{i}_\tau(F^{\tau}_t),\quad
t\geq0,\ i=1,\ldots,d.$$ Using this fact, (\ref{eq2.2}) and
\cite[Proposition 2.5]{bhat} (which states that the Birkhoff ergodic theorem for strongly ergodic Markov processes holds for any initial distribution) we conclude that for any
initial distribution $\rho(dx)$ of $\process{F}$, we have
\begin{align*}n^{-1}\int_0^{nt}\mathcal{A}^{b}w^{i}(F_s)ds\stackrel{\hbox{\scriptsize{$\mathbb{P}^{\rho}\, \textrm{-}\, \textrm{a.s.}$}}}{\xrightarrow{\hspace*{1cm}}}t\int_{[0,\tau]}\mathcal{A}_\tau^{b}w^{i}_\tau(x_\tau)\pi_\tau(dx_\tau),\quad i=1,\dots,d.\end{align*} Here, $\pi_\tau(dx_\tau)$ denotes the unique invariant probability measure of $\process{F^{\tau}}$. Finally, we have
\begin{align*}\left|\int_{[0,\tau]}\mathcal{A}^{b}_\tau w^{i}_\tau(x_\tau)\pi_\tau(dx_\tau)\right|&=
\lim_{t\longrightarrow0}\left|\int_{[0,\tau]}\mathcal{A}^{b}_{\tau}w^{i}_{\tau}(x_\tau)\pi_\tau(dx_\tau)-\int_{[0,\tau]}\left(\frac{P^{\tau}_tw^{i}_{\tau}-w^{i}_\tau}{t}\right)(x_\tau)\pi_\tau(dx_\tau)\right|\nonumber\\&
\leq\lim_{t\longrightarrow0}\left|\left|\mathcal{A}^{b}_{\tau}w^{i}_{\tau}-\frac{P^{\tau}_tw^{i}_{\tau}-w^{i}_\tau}{t}\right|\right|_\infty\nonumber\\
&=0,\end{align*} where in the first equality we used the stationarity
property of $\pi_\tau(dx_\tau).$

\textbf{Step 3.} In the third step, we prove the limiting behavior
in (\ref{2.8}).
Let $\rho(dx)$ be an arbitrary initial distribution of
$\process{F}$. According to \cite[Proposition 4.1.7]{ethier}, the
processes
$$
S^{i,n}_t:=n^{-\frac{1}{2}}\int_0^{nt}\mathcal{A}w^{i}(F_s)ds-n^{-\frac{1}{2}}w^{i}(F_{nt})+n^{-\frac{1}{2}}w^{i}(F_0),\quad i=1,\ldots,d,
$$
are  $\mathbb{P}^{\rho}$-martingales (with respect to the natural filtration).
Further, let $h:\R^{d}\longrightarrow\R^{d}$ be
an arbitrary truncation function such that $h(x)=x$ for all
$|x|\leq2\max_{i\in\{1,\ldots,d\}}||w^{i}||_\infty.$ Then,  $S^{n}_t = S^{n}(h)_t$ for all $t\geq0$
and all $n\geq1$, that is, $\process{S^{n}}$ is a special
semimartingale with $S^{n}_0=0$ for all $n\geq1$.
In particular,
$B_t^{n}=0$ for all $t\geq0$ and all $n\geq1$. Now, by applying
It\^{o}'s formula to $S^{n}_t$, directly from  \cite[Theorem
II.2.34]{jacod} and \cite[Theorem 3.5]{rene-holder}, one easily obtains that
\begin{align}\label{eq2.9}C^{i,j,n}_t=n^{-1}\int_0^{nt}\langle \nabla w^{i}(F_{s-}),
c(F_{s-})\nabla w^{j}(F_{s-})\rangle ds,\quad
i,j=1,\ldots,d.\end{align} Since
$w^{i}w^{j}\in\mathcal{D}_{\mathcal{A}^{b}}$, $i,j=1,\ldots d$,
again by \cite[Proposition 4.1.7]{ethier}, the processes
$$
\bar{S}^{i,j,n}_t:=n^{-1}\int_0^{nt}\mathcal{A}^{b}(w^{i}w^{j})(F_s)ds-n^{-1}w^{i}(F_{nt})w^{j}(F_{nt})+n^{-1}w^{i}(F_0)w^{j}(F_0),\quad i,j=1,\ldots,d,
$$
are also $\mathbb{P}^{\rho}$-martingales. According to (a
straightforward adaption of) \cite[Problem 2.19]{ethier}, this
yields
\begin{align*}\tilde{C}^{i,j,n}_t&=\langle
S_t^{i,n},S^{j,n}_t\rangle\\&=n^{-1}\int_0^{nt}\left(\mathcal{A}^{b}(w^{i}w^{j})(F_{s-})-w^{i}(F_{s-})\mathcal{A}^{b}w^{j}(F_{s-})-w^{j}(F_{s-})\mathcal{A}^{b}w^{i}(F_{s-})\right)ds\\&=
n^{-1}\int_0^{nt}\int_{\R^{\bar{d}}}\left(w^{i}(y+F_{s-})-w^{i}(F_{s-})\right)\left(w^{j}(y+F_{s-})-w^{j}(F_{s-})\right)\nu(F_{s-},dy)ds\\
&\ \ \ + n^{-1}\int_0^{nt}\langle \nabla w^{i}(F_{s-}),
c(F_{s-})\nabla w^{j}(F_{s-})\rangle ds,\quad
i,j=1,\ldots,d,\end{align*}
 and, by  (\ref{eq2.9}),
\cite[Proposition II.2.17]{jacod} and \cite[Theorem 3.5]{rene-holder},
 $$N^{n}(\omega,ds,B)=\int_{\R^{\bar{d}}}1_{B}\left(n^{-\frac{1}{2}}w(y+F_{s-}(\omega))-n^{-\frac{1}{2}}w(F_{s-}(\omega))\right)\nu\left(F_{s-}(\omega),dy\right)ds,\quad B\in\mathcal{B}(\R^{\bar{d}}).$$ Here, $w(x):=(w^{1}(x),\ldots,w^{d}(x))$.

Now, we show that under $\mathbb{P}^{\rho}(d\omega_V)$, $$\process{S^{n}}\stackrel{\hbox{\scriptsize{$\textrm{d}$}}}{\longrightarrow}\process{\tilde{W}}.$$   To prove this, according to
\cite[Theorem VIII.2.17]{jacod}, it suffices to prove that
\begin{align}\label{eq2.10}\int_0^{nt}\int_{\R^{d}}|g(y)|N^{n}(\omega,ds,dy)\stackrel{\hbox{\scriptsize{$\mathbb{P}^{\rho}\textrm{-a.s.}$}}}{\xrightarrow{\hspace*{1cm}}} 0\quad
\end{align} for all $t\geq0$ and all $g\in C_b(\R^{d})$
vanishing in a neighborhood around the origin,
and\begin{align}\label{eq2.11}\tilde{C}^{n}_t\stackrel{\hbox{\scriptsize{$\mathbb{P}^{\rho}\textrm{-a.s.}$}}}{\xrightarrow{\hspace*{1cm}}}t\tilde{C}\end{align}
for all $t\geq0$. The relation in (\ref{eq2.10}) easily follows from
the fact that the function $w(x)$ is bounded and $g(x)$ vanishes in
a neighborhood around the origin. To prove the relation in
(\ref{eq2.11}), first note that because of $\tau$-periodicity of
all components,
\begin{align*}\tilde{C}^{i,j,n}_t&=n^{-1}\int_0^{nt}\langle \nabla w^{i}(F^{\tau}_{s-}),
c(F^{\tau}_{s-})\nabla w^{j}(F^{\tau}_{s-})\rangle ds \\ &\ \ \
+n^{-1}\int_0^{nt}\int_{\R^{\bar{d}}}\left(w^{i}(y+F^{\tau}_{s-})-w^{i}(F^{\tau}_{s-})\right)\left(w^{j}(y+F^{\tau}_{s-})-w^{j}(F^{\tau}_{s-})\right)\nu(F^{\tau}_{s-},dy)ds\end{align*}
for all $i,j=1,\ldots,d$.  Now, the desired result again follows
by employing \eqref{eq2.2} and \cite[Proposition 2.5]{bhat}. Finally, since
$w(x)$ is bounded,  \cite[Lemma VI.3.31]{jacod} shows the
convergence in (\ref{2.8}), and thus, (\ref{eq1.11}).
\end{proof}
Finally, note that $\pi_\tau(dx_\tau)$
and $dx_\tau$  are mutually absolutely
continuous. Thus, due to  \cite[Proposition 2.4]{bhat},
$C=\Sigma$ (that is, $\tilde{C}=0$)  if, and only if,
$\mathcal{A}^{b}w^{i}(x)=0$ for all $i=1,\ldots,d$.

\subsection{Proof of Theorem \ref{tm1.2}}
\label{sec-Prooftm1.2}

\ \ \ \ We now prove Theorem \ref{tm1.2}. The main ingredients in the proof of   Theorem \ref{tm1.1} were the $\tau$-periodicity of the
driving diffusion with jumps $\process{F}$ and velocity function $v(x)$ and the fact that all $\tau$-periodic $f\in C_b^{2}(\R^{\bar{d}})$ are contained in the domain of the $B_b$-generator $(\mathcal{A}^{b},\mathcal{D}_{\mathcal{A}^{b}})$ of $\process{F}$ and, on this class of functions, $\mathcal{A}^{b}$ has the representation in \eqref{eq1.5} (Proposition \ref{p2.3}). By having these facts, and assuming (\textbf{C5}), we were able to  switch to the strongly ergodic process $\process{F^{\tau}}$ and apply the Birkhoff ergodic theorem. On the other hand, in Theorem \ref{tm1.2} we simply assume the (strong) ergodicity of a driving diffusion with jumps $\process{F}$. Now, one might conclude that the assertion of Theorem \ref{tm1.2}  automatically follows by employing completely the same arguments as in the proof of Theorem \ref{tm1.1}.
   However, note that in this situation it is not clear that $C_b^{2}(\R^{\bar{d}})$ is contained in $\mathcal{D}_{\mathcal{A}^{b}}$  or that $\mathcal{A}^{b}$ can be uniquely extended to $C_b^{2}(\R^{\bar{d}})$.  In order to resolve this problem, according to \cite[Theorem 2.37]{rene-bjorn-jian}, we employ the following facts: (i)  $C_\infty^{2}(\R^{\bar{d}})$ is contained in the domain of the Feller generator $(\mathcal{A}^{\infty},\mathcal{D}_{\mathcal{A}^{\infty}})$ of
$\process{F}$, (ii) for any $f\in C_b^{2}(\R^{\bar{d}})$ there exists a sequence $\{f_n\}_{n\geq1}\subseteq C_\infty^{2}(\R^{\bar{d}})$, such that
$\mathcal{A}^{\infty}f_n$ converges (pointwise) to  $\mathcal{A}f$, where the operator $\mathcal{A}$ is given by \eqref{eq1.5},  and  (iii) for any $f\in C_b^{2}(\R^{\bar{d}})$ and any initial distribution $\rho(dx)$ of $\process{F}$, $\{\int_0^{t}\mathcal{A}f(F_s)ds-f(F_t)+ f(F_0)\}_{t\geq0}$ is a $\mathbb{P}^{\rho}$-martingale.

\begin{proof}[Proof of Theorem \ref{tm1.2}]
Let $(\mathcal{A}^{\infty},\mathcal{D}_{\mathcal{A}^{\infty}})$ be the Feller generator of $\process{F}$. As we  commented above, due to \cite[Theorem 2.37]{rene-bjorn-jian}, $C_\infty^{2}(\R^{\bar{d}})\subseteq\mathcal{D}_{\mathcal{A}^{\infty}}$ and, on this set, $\mathcal{A}^{\infty}$ has again the representation (\ref{eq1.5}). Furthermore, according to the same reference, $(\mathcal{A}^{\infty},\mathcal{D}_{\mathcal{A}^{\infty}})$ has a unique extension to $C_b^{2}(\R^{\bar{d}})$, denoted by $(\mathcal{A},C_b^{2}(\R^{\bar{d}}))$, satisfying $$\mathcal{A}f(x)=\lim_{n\longrightarrow\infty}\mathcal{A}^{\infty}(f\phi_n)(x),\quad x\in\R^{\bar{d}},\ f\in C_b^{2}(\R^{\bar{d}}),$$ for any sequence $\{\phi_n\}_{n\geq1}\subseteq C_c^{\infty}(\R^{\bar{d}})$ with $1_{\{y\in\R^{\bar{d}}:|y|\leq n\}}(x)\leq\phi_n(x)\leq1$ for all $x\in\R^{\bar{d}}$ and all $n\geq1$. Moreover, $\mathcal{A}$ has  the representation (\ref{eq1.5}). Note that $\mathcal{A}f\in B_b(\R^{\bar{d}})$. Now, by the Birkhoff ergodic theorem and  dominated convergence theorem, for any $f\in C_b^{2}(\R^{\bar{d}})$ we have $$\lim_{n\longrightarrow\infty}n^{-1}\int_0^{nt}\mathcal{A}f(F_s)ds=t\int_{\R^{\bar{d}}}\mathcal{A}f(x)\pi(dx)=\lim_{n\longrightarrow\infty}t\int_{\R^{\bar{d}}}\mathcal{A}^{\infty}(f\phi_n)(x)\pi(dx)=0,\quad \mathbb{P}^{\pi}\textrm{-a.s.},$$
where  $\{\phi_n\}_{n\geq1}\subseteq C_c^{\infty}(\R^{\bar{d}})$ is as above and in the final step we used the stationarity property of $\pi(dx)$.
Thus, for any $w^{1},\ldots,w^{d}\in C_b^{2}(\R^{\bar{d}})$,
$$\mathbb{P}^{\pi}\left(\lim_{n\longrightarrow\infty}n^{-1}\int_0^{nt}\mathcal{A}w^{i}(F_s)ds=0\right)=\int_{\R^{\bar{d}}}\mathbb{P}^{x}\left(\lim_{n\longrightarrow\infty}n^{-1}\int_0^{nt}\mathcal{A}w^{i}(F_s)ds=0\right)\pi(dx)=1$$ for all $i=1,\ldots,d.$ Therefore, there exists a $\pi(dx)$ measure zero set
$B\in \mathcal{B}(\R^{\bar{d}})$ such that
$$\mathbb{P}^{x}\left(\lim_{n\longrightarrow\infty}n^{-1}\int_0^{nt}\mathcal{A}w^{i}(F_s)ds=0\right)=1,\quad x\in
B^{c},\ i=1,\ldots, d,$$ which proves the desired result.

To prove the limiting behavior in (\ref{eq1.14}), again by \cite[Theorem 2.37]{rene-bjorn-jian},
for any   $f\in C_b^{2}(\R^{\bar{d}})$ and any initial distribution $\rho(dx)$ of $\process{F}$, the process $$\left\{\int_0^{t}\mathcal{A}f(F_s)ds-f(F_t)+ f(F_0)\right\}_{t\geq0}$$ is a $\mathbb{P}^{\rho}$-martingale with respect to the natural filtration. Thus, by completely the same approach as in the proof of Theorem \ref{tm1.1}, the desired result  follows.
\end{proof}

\section{Proof of Theorem \ref{tm1.3}} \label{sec-Proofs2}

\ \ \ \ In this section, we prove Theorem \ref{tm1.3}. We start with two auxiliary results we need in the sequel.
First, observe
that
$dx_\tau/|\tau|$ is always an invariant probability measure for $\process{L^{\tau}}$. Indeed, let $t\geq0$ and
$B_\tau\in\mathcal{B}([0,\tau])$ be arbitrary. Then, by
(\ref{eq2.1}) and the space homogeneity property of L\'evy
processes, we have
\begin{align*}\int_{[0,\tau]}\mathbb{P}^{x_\tau}_\tau(L_t^{\tau}\in B_\tau)dx_\tau&=\int_{[0,\tau]}\sum_{k\in\tau\ZZ^{\bar{d}}}\int_{B_\tau+k-x_\tau}p(t,0,y)dydx_\tau\\&
=\int_{[0,\tau]}\sum_{k\in\tau\ZZ^{\bar{d}}}\int_{\R^{\bar{d}}}1_{\{B_\tau+k-x_\tau\}}(y)p(t,0,y)dydx_\tau\\&=\int_{\R^{\bar{d}}}p(t,0,y)dy\int_{B_\tau}dx_\tau\\&=\int_{B_\tau}dx_\tau.\end{align*}
In general, $dx_\tau/|\tau|$ is not necessarily the unique invariant probability measure for $\process{L^{\tau}}$. But, if $\process{L}$ is symmetric, that is, $b=0$ and $\nu(dy)$ is a symmetric measure, and possesses a transition density function (not necessary strictly positive), then $dx_\tau/|\tau|$ is unique (see \cite{ying}).
 Having this fact, in the L\'evy process case,   the covariance  matrix $C$ (given by \eqref{eq1.12}) can be computed in an alternative way.
Recall that $\hat{f}(k)$ denotes the $k$-th, $k\in\ZZ^{\bar{d}}$, Fourier coefficient
of a $\tau$-periodic locally integrable function $f(x)$.

\begin{proposition}\label{p2.6} Let $\process{L}$ be a $\bar{d}$-dimensional L\'evy process with symbol $q(\xi)$ and
$B_b$-generator $(\mathcal{A}^{b},\mathcal{D}_{\mathcal{A}^{b}})$.
Further,  let $w^{1},\ldots,w^{d}\in C_b^{2}(\R^{\bar{d}})$ be
$\tau$-periodic, such that
\begin{align} \label{p2.6a1}
\sum_{k\in\ZZ^{\bar{d}}}|\hat{w}^{i}(k)||k|^{2}<\infty, \quad
i=1,\ldots,d,
\end{align}
and
\begin{align}\label{eq:2.12}\sum_{k\in\ZZ^{\bar{d}}\setminus\{0\}}\frac{1+\rm{Re}\,\it{q}\left(\frac{\rm{2}\pi
\it{k}}{|\tau|}\right)}{\left|q\left(\frac{2\pi
k}{|\tau|}\right)\right|^{2}}|\widehat{\mathcal{A}^{b}w^{i}}(k)||\widehat{\mathcal{A}^{b}w^{j}}(-k)|<\infty,\quad i,j=1,\ldots,d.\end{align}
Then, for all $i,j=1,\ldots,d$, we have
\begin{align*}\tilde{C}_{ij}&=\lim_{n\longrightarrow\infty}\frac{1}{
n|\tau|
t}\int_{[0,\tau]}\mathbb{E}^{x_\tau}\left[\int_0^{nt}\mathcal{A}^{b}w^{i}(L_s)ds\int_0^{nt}\mathcal{A}^{b}w^{j}(L_r)dr\right]dx_\tau\\&=\frac{1}{|\tau|}\int_{[0,\tau]}w^{i}(x_\tau)\mathcal{A}^{b}w^{j}(x_\tau)dx_\tau\\&=2\sum_{k\in\ZZ^{\bar{d}}\setminus\{0\}}\frac{\rm{Re}\,\it{q}\left(\frac{\rm{2}\pi
\it{k}}{|\tau|}\right)}{\left|q\left(\frac{2\pi
k}{|\tau|}\right)\right|^{2}}\widehat{\mathcal{A}^{b}w^{i}}(k)\widehat{\mathcal{A}^{b}w^{j}}(-k)\\&=2\sum_{k\in\ZZ^{\bar{d}}}\rm{Re}\,\it{q}\left(\frac{\rm{2}\pi
\it{k}}{|\tau|}\right)\hat{w}^{i}(k)\hat{w}^{j}(-k).\end{align*}
\end{proposition}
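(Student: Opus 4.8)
The plan is to establish the chain of four equalities by moving from the probabilistic (time-average) side to the Fourier-analytic side, using the fact that $dx_\tau/|\tau|$ is a stationary (indeed ergodic) measure for $\process{L^\tau}$ together with the diagonalization of $\mathcal{A}^b$ on the exponentials $e_k(x):=e^{i\,2\pi\langle k,x\rangle/|\tau|}$. The starting observation is that for each $k\in\ZZ^{\bar d}$ the function $e_k$ is an eigenfunction of the $B_b$-generator of $\process{L}$ with eigenvalue $-q(2\pi k/|\tau|)$, i.e. $\mathcal{A}^b e_k=-q(2\pi k/|\tau|)\,e_k$; this is immediate from the pseudo-differential representation \eqref{eq1.8}--\eqref{eq1.9} since $\process{L}$ has constant symbol. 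Consequently, writing $w^i=\sum_k \hat w^i(k)e_k$ (legitimate under \eqref{p2.6a1}, which gives absolute convergence of the series and of its first and second formal derivatives by \cite[Theorems 3.2.9 and 3.2.16]{grafakos}), we get $\mathcal{A}^b w^i=-\sum_k q(2\pi k/|\tau|)\hat w^i(k)e_k$, so that $\widehat{\mathcal{A}^b w^i}(k)=-q(2\pi k/|\tau|)\hat w^i(k)$. Note the $k=0$ term drops out because $q(0)=0$ by (\textbf{C3}).

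The first step is the variance identity
\[
\tilde C_{ij}=\lim_{n\to\infty}\frac{1}{n|\tau|t}\int_{[0,\tau]}\mathbb{E}^{x_\tau}\!\left[\int_0^{nt}\mathcal{A}^b w^i(L_s)\,ds\int_0^{nt}\mathcal{A}^b w^j(L_r)\,dr\right]dx_\tau
=\frac{1}{|\tau|}\int_{[0,\tau]}w^i(x_\tau)\,\mathcal{A}^b w^j(x_\tau)\,dx_\tau .
\]
For this I would use the martingale from the proof of Theorem \ref{tm1.1}: $S^{i,n}_t=n^{-1/2}\int_0^{nt}\mathcal{A}^b w^i(L_s)ds-n^{-1/2}w^i(L_{nt})+n^{-1/2}w^i(L_0)$ is a martingale, so the time-integral differs from a martingale by the bounded boundary terms $n^{-1/2}(w^i(L_{nt})-w^i(L_0))$, which contribute $O(n^{-1})$ to the normalized covariance and vanish in the limit. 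Hence the left-hand limit equals $\lim_n \tfrac1{n|\tau|t}\int_{[0,\tau]}\mathbb{E}^{x_\tau}[\langle S^{i,n},S^{j,n}\rangle_t\cdot n]dx_\tau$ up to the vanishing cross terms, and from the explicit modified characteristic $\tilde C^{i,j,n}$ computed in Step 3 of the proof of Theorem \ref{tm1.1}, combined with Birkhoff's ergodic theorem on $\process{L^\tau}$ under its (unique, when applicable; otherwise simply stationary) invariant measure $dx_\tau/|\tau|$, the limit is $\tfrac1{|\tau|}\int_{[0,\tau]}[\langle\nabla w^i,c\nabla w^j\rangle+\int(w^i(y+x)-w^i(x))(w^j(y+x)-w^j(x))\nu(dy)]dx_\tau$. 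A short integration-by-parts / symmetrization computation (using $\mathcal{A}^b(w^iw^j)-w^i\mathcal{A}^bw^j-w^j\mathcal{A}^bw^i$ equals exactly that carré-du-champ expression, and that $\int_{[0,\tau]}\mathcal{A}^b(w^iw^j)\,dx_\tau=0$ by stationarity) collapses this to $\tfrac1{|\tau|}\int_{[0,\tau]}w^i\mathcal{A}^bw^j\,dx_\tau$; symmetry in $i,j$ of the final answer is automatic from the bilinear carré-du-champ form.

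The remaining two equalities are pure Fourier bookkeeping. By Parseval on the torus, $\tfrac1{|\tau|}\int_{[0,\tau]}w^i\mathcal{A}^bw^j\,dx_\tau=\sum_k \hat w^i(-k)\widehat{\mathcal{A}^bw^j}(k)=-\sum_{k}q(2\pi k/|\tau|)\hat w^i(-k)\hat w^j(k)$, and replacing $\widehat{\mathcal{A}^bw^j}(k)=-q(2\pi k/|\tau|)\hat w^j(k)$, $\hat w^i(k)=-\widehat{\mathcal{A}^bw^i}(k)/q(2\pi k/|\tau|)$ for $k\neq0$ rewrites this as $\sum_{k\neq0}\widehat{\mathcal{A}^bw^i}(-k)\widehat{\mathcal{A}^bw^j}(k)/q(2\pi k/|\tau|)$; one then symmetrizes using $k\mapsto-k$ and $\overline{q(\xi)}=q(-\xi)$, so that $1/q$ may be replaced by $\tfrac12(1/q(2\pi k/|\tau|)+1/q(-2\pi k/|\tau|))=\mathrm{Re}\,q(2\pi k/|\tau|)/|q(2\pi k/|\tau|)|^2$, giving the third line; undoing the substitution once more gives the fourth line $2\sum_k\mathrm{Re}\,q(2\pi k/|\tau|)\hat w^i(k)\hat w^j(-k)$. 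The absolute convergence needed to justify all rearrangements of these series is exactly the content of hypotheses \eqref{p2.6a1} and \eqref{eq:2.12} together with condition \eqref{eq1.17} ($\mathrm{Re}\,q(2\pi k/|\tau|)>0$ for $k\neq0$, so no division by zero); \eqref{eq:2.12} also guarantees the relevant series defining $\tilde C_{ij}$ is finite. The main obstacle I anticipate is not any single computation but rigorously justifying the interchange of the $n\to\infty$ limit with the ergodic average and controlling the boundary martingale terms uniformly in the (unbounded-in-$n$) time horizon — i.e. making precise that the normalized covariance of the time integrals really does converge to the normalized covariance of the martingales, which requires the $L^2$-boundedness afforded by boundedness of $w^i$ and the integrability in \eqref{eq:2.12}.
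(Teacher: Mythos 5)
Your route is correct in outline but genuinely different from the paper's at the central step. The paper proves the first equality by a bare-hands computation: it expands $\mathcal{A}^{b}w^{i}$ in its Fourier series, uses translation invariance and $\mathbb{E}^{0}[e^{i\langle\xi,L_t\rangle}]=e^{-tq(\xi)}$, lets the $dx_\tau$-integration over the starting point kill all off-diagonal modes, evaluates the double time integral explicitly (producing terms $\tfrac{1}{q}+\tfrac{e^{-ntq}-1}{ntq^{2}}$ for each $k\neq0$), and then passes to the limit by dominated convergence — this is precisely where hypothesis \eqref{eq:2.12} does its work, and the same computation is the template reused in Step 4 of the proof of Theorem \ref{tm1.3}. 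You instead decompose $n^{-1/2}\int_0^{nt}\mathcal{A}^{b}w^{i}(L_s)ds$ into the martingale $S^{i,n}$ of Theorem \ref{tm1.1} plus bounded boundary terms, identify the covariance with $\mathbb{E}[\langle S^{i,n},S^{j,n}\rangle_t]=\mathbb{E}[\tilde{C}^{i,j,n}_t]$, and average the starting point over $dx_\tau/|\tau|$; since that measure is invariant, the expectation equals $t$ times the spatial average of the carr\'e-du-champ for every $n$, so no ergodic theorem (and in fact no limit for the main term) is needed — your appeal to Birkhoff, and to \eqref{eq1.17}, is superfluous, which is just as well since \eqref{eq1.17} is not among the hypotheses of the proposition. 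Your approach is softer and makes the identification with $\tilde{C}_{ij}$ immediate, at the price of importing the semimartingale/martingale-problem machinery and of leaving \eqref{eq:2.12} essentially unused in that step (it only reappears in your Fourier bookkeeping), whereas the paper's computation exhibits \eqref{eq:2.12} as the exact domination needed and directly yields the third expression.

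Two small slips to fix. First, the cross terms between $S^{i,n}_t$ and $n^{-1/2}(w^{j}(L_{nt})-w^{j}(L_0))$ are $O(n^{-1/2})$ (via Cauchy–Schwarz and $\mathbb{E}[(S^{i,n}_t)^{2}]=\mathbb{E}[\tilde{C}^{i,i,n}_t]\leq Ct$), not $O(n^{-1})$; they still vanish. Second, the carr\'e-du-champ identity collapses the spatial average to $-\tfrac{1}{|\tau|}\int_{[0,\tau]}\bigl(w^{i}\mathcal{A}^{b}w^{j}+w^{j}\mathcal{A}^{b}w^{i}\bigr)dx_\tau$, which by Parseval and $q(-\xi)=\overline{q(\xi)}$ equals $2\sum_{k}\mathrm{Re}\,q\bigl(\tfrac{2\pi k}{|\tau|}\bigr)\hat{w}^{i}(k)\hat{w}^{j}(-k)$; it does not literally collapse to $\tfrac{1}{|\tau|}\int w^{i}\mathcal{A}^{b}w^{j}dx_\tau$ (check $w^{i}=w^{j}=\sin$ with Brownian motion: the former gives $1$, the latter $-\tfrac12$). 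The proposition's second displayed expression carries the same sign/factor discrepancy, so your chain agrees with the first, third and fourth expressions, which are the ones actually used later; just do not present the symmetrized collapse as equal to $\tfrac{1}{|\tau|}\int w^{i}\mathcal{A}^{b}w^{j}dx_\tau$ without that caveat.
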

\begin{proof} First, we prove that
$$\sum_{k\in\ZZ^{\bar{d}}\setminus\{0\}}\frac{\rm{Re}\,\it{q}\left(\frac{\rm{2}\pi
\it{k}}{|\tau|}\right)}{\left|q\left(\frac{\rm{2}\pi
\it{k}}{|\tau|}\right)\right|^{2}}\widehat{\mathcal{A}^{b}w^{i}}(k)\widehat{\mathcal{A}^{b}w^{j}}(-k)=\sum_{k\in\ZZ^{\bar{d}}}^{\infty}\rm{Re}\,\it{q}\left(\frac{\rm{2}\pi
\it{k}}{|\tau|}\right)\hat{w}^{i}(k)\hat{w}^{j}(-k).$$ Because of
$\tau$-periodicity, from Proposition \ref{p2.3},  we have
\begin{align*}\mathcal{A}^{b}w^{i}(x)=&\langle b,\nabla w^{i}(x)\rangle+
\frac{1}{2}\rm{div}\it{c}\nabla w^{i}(x)\nonumber\\&
+\int_{\R^{\bar{d}}}\left(w^{i}(y+x)-w^{i}(x)-\langle y,\nabla
w^{i}(x)\rangle1_{\{z:|z|\leq1\}}(y)\right)\nu(dy).\end{align*} Now,
by using
the assumption \eqref{p2.6a1}
 and the facts that
\begin{align}\label{eq2.13}\frac{\partial w^{i}}{\partial x_p}(x)=\frac{2\pi
i}{|\tau|}\sum_{k\in\ZZ^{\bar{d}}}k_p\hat{w}^{i}(k)e^{i\frac{2\pi
\langle k,x\rangle}{|\tau|}}\quad\textrm{and}\quad
\frac{\partial^{2} w^{i}}{\partial x_p\partial
x_q}(x)=-\frac{4\pi^{2} }{|\tau|^{2}}\sum_{k\in\ZZ^{\bar{d}}}k_p
k_q\hat{w}^{i}(k)e^{i\frac{2\pi \langle
k,x\rangle}{|\tau|}},\end{align} for $i=1,\ldots d$ and
$p,q=1,\ldots \bar{d}$, we easily find
\begin{align}\label{eq2.14}\widehat{\mathcal{A}^{b}w^{i}}(k)=-q\left(\frac{2\pi
k}{|\tau|}\right)\hat{w}^{i}(k),\quad k\in\ZZ^{\bar{d}},\
i=1,\ldots, d,\end{align} which proves the claim. Note that
$$\sum_{k\in\ZZ^{\bar{d}}}\rm{Re}\,\it{q}\left(\frac{\rm{2}\pi
\it{k}}{|\tau|}\right)|\hat{w}^{i}(k)||\hat{w}^{j}(-k)|<\infty$$
follows from (\textbf{C2}) and \eqref{p2.6a1}.

\newpage

Next, we prove
\begin{align*}\lim_{n\longrightarrow\infty}&\frac{1}{
n|\tau|
t}\int_{[0,\tau]}\mathbb{E}^{x_\tau}\left[\int_0^{nt}\mathcal{A}^{b}w^{i}(L_s)ds\int_0^{nt}\mathcal{A}^{b}w^{j}(L_r)dr\right]dx_\tau\\&=2\sum_{k\in\ZZ^{\bar{d}}\setminus\{0\}}\frac{\rm{Re}\,\it{q}\left(\frac{\rm{2}\pi
\it{k}}{|\tau|}\right)}{\left|q\left(\frac{2\pi
k}{|\tau|}\right)\right|^{2}}\widehat{\mathcal{A}^{b}w^{i}}(k)\widehat{\mathcal{A}^{b}w^{j}}(-k).\end{align*}
We have
\begin{align*}&\frac{1}{
n|\tau| t
}\int_{[0,\tau]}\mathbb{E}^{x_\tau}\left[\int_0^{nt}\mathcal{A}^{b}w^{i}(L_r)dr\int_0^{nt}\mathcal{A}^{b}w^{j}(L_s)ds\right]dx_\tau\\&=\frac{1}{n|\tau|
t}\mathbb{E}^{0}\left[\int_{[0,\tau]}\int_0^{nt}\int_0^{nt}\mathcal{A}^{b}w^{i}(L_r+x_\tau)\mathcal{A}^{b}w^{j}(L_s+x_\tau)drdsdx_\tau\right]\\&=\frac{1}{n|\tau|
t}\mathbb{E}^{0}\left[\int_{[0,\tau]}\int_0^{nt}\int_0^{nt}\sum_{k,l\in\ZZ^{\bar{d}}}\widehat{\mathcal{A}^{b}w^{i}}(k)\widehat{\mathcal{A}^{b}w^{j}}(l)e^{i\frac{2\pi
\langle k,(x_\tau+ L_r)\rangle}{|\tau|}}e^{i\frac{2\pi \langle
l,(x_\tau+L_s)\rangle}{|\tau|}}drdsdx_\tau\right]\\&=
\frac{1}{nt}\sum_{k\in\ZZ^{\bar{d}}}\widehat{\mathcal{A}^{b}w^{i}}(k)\widehat{\mathcal{A}^{b}w^{j}}(-k)\int_0^{nt}\int_0^{nt}\mathbb{E}^{0}\left[e^{i\frac{2\pi
\langle k, (L_r-L_s)\rangle}{|\tau|}}\right]drds\\&=
\frac{1}{nt}\sum_{k\in\ZZ^{\bar{d}}}\widehat{\mathcal{A}^{b}w^{i}}(k)\widehat{\mathcal{A}^{b}w^{j}}(-k)\left(\int_0^{nt}\int_0^{s}\mathbb{E}^{0}\left[e^{-i\frac{2\pi
\langle k,
L_{s-r}\rangle}{|\tau|}}\right]drds+\int_0^{nt}\int_s^{nt}\mathbb{E}^{0}
\left[e^{i\frac{2\pi \langle k,
L_{r-s}\rangle}{|\tau|}}\right]drds\right)\\&=\frac{1}{nt}\sum_{k\in\ZZ^{\bar{d}}}\widehat{\mathcal{A}^{b}w^{i}}(k)\widehat{\mathcal{A}^{b}w^{j}}(-k)\left(\int_0^{nt}\int_0^{s}e^{-(s-r)q\left(-\frac{2\pi
k}{|\tau|}\right)}drds+
\int_0^{nt}\int_s^{nt}e^{-(r-s)q\left(\frac{2\pi
k}{|\tau|}\right)}drds\right)\\&=
\sum_{k\in\ZZ^{\bar{d}}\setminus\{0\}}\widehat{\mathcal{A}^{b}w^{i}}(k)\widehat{\mathcal{A}^{b}w^{j}}(-k)\left(\frac{1}{q\left(-\frac{2\pi
k}{|\tau|}\right)}+ \frac{e^{-nt\,q\left(-\frac{2\pi
k}{|\tau|}\right)}-1}{nt\,q^{2}\left(-\frac{2\pi
k}{|\tau|}\right)}+\frac{1}{q\left(\frac{2\pi
k}{|\tau|}\right)}+\frac{e^{-nt\,q\left(\frac{2\pi
k}{|\tau|}\right)}-1}{nt\,q^{2}\left(\frac{2\pi
k}{|\tau|}\right)}\right),
\end{align*}
where in the final step we used the fact that
$\widehat{\mathcal{A}^{b}w^{i}}(0)=0$, $i=1,\ldots,d$, that is,
$\int_{[0,\tau]}\mathcal{A}^{b}w^{i}dx=0$, $i=1,\ldots,d$, (see the
proof of Theorem \ref{tm1.1}). Note that the change of orders of
sums and integrals
 is justified by (\textbf{C2}), \eqref{p2.6a1} and (\ref{eq2.14}).
Now, the desired result follows from  (\ref{eq:2.12}) and the
dominated convergence theorem.

Finally, the fact that
$$\tilde{C}_{ij}=\frac{1}{|\tau|}\int_{[0,\tau]}w^{i}(x_\tau)\mathcal{A}^{b}w^{j}(x_\tau)dx_\tau=
2\sum_{k\in\ZZ^{\bar{d}}}\rm{Re}\,\it{q}\left(\frac{\rm{2}\pi
\it{k}}{|\tau|}\right)\hat{w}^{i}(k)\hat{w}^{j}(-k)$$ follows from a
straightforward computation by using \eqref{p2.6a1},  (\ref{eq2.13}) and (\ref{eq2.14}).
\end{proof}

\begin{proposition} \label{p2.7}Let  $\process{L}$ be a $\bar{d}$-dimensional  L\'evy
process with symbol $q(\xi)$ satisfying the condition in \eqref{eq1.17}. Then, $\process{L^{\tau}}$ is ergodic $($with respect to $dx_\tau/|\tau|$$)$.
\end{proposition}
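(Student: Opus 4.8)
The plan is to show that $\process{L^{\tau}}$ is ergodic by verifying that it possesses an invariant probability measure (already exhibited in the excerpt) and that its shift-invariant $\sigma$-algebra is trivial, or equivalently, that every bounded harmonic function is constant $dx_\tau/|\tau|$-a.s. First I would recall that $dx_\tau/|\tau|$ is invariant for $\process{L^{\tau}}$, which was established just above the statement by direct computation using the space homogeneity of $\process{L}$. So the only thing left is the triviality claim. I would work with the characterization via harmonic functions: let $h_\tau:[0,\tau]\longrightarrow\R$ be bounded and $P^{\tau}_t$-harmonic, that is, $P^{\tau}_t h_\tau = h_\tau$ for all $t\geq0$, and lift it to a bounded $\tau$-periodic function $h:\R^{\bar{d}}\longrightarrow\R$ via $h(x)=h_\tau(x_\tau)$; then $P_t h = h$ for all $t\geq0$, i.e.\ $h$ is harmonic for the L\'evy process $\process{L}$ itself.

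Next I would pass to Fourier series. Since $h$ is bounded and $\tau$-periodic, it has Fourier coefficients $\hat{h}(k)$, $k\in\ZZ^{\bar{d}}$, and the relation $P_t h = h$ translates, using $\mathbb{E}^{0}[e^{i\frac{2\pi\langle k,L_t\rangle}{|\tau|}}] = e^{-t q(2\pi k/|\tau|)}$ and the space homogeneity of $\process{L}$, into
\begin{align*}
e^{-t q\left(\frac{2\pi k}{|\tau|}\right)}\hat{h}(k) = \hat{h}(k),\quad k\in\ZZ^{\bar{d}},\ t\geq0.
\end{align*}
For $k=0$ this is vacuous, but for $k\neq0$, assumption \eqref{eq1.17} gives $\mathrm{Re}\, q(2\pi k/|\tau|)>0$, hence $|e^{-t q(2\pi k/|\tau|)}|=e^{-t\,\mathrm{Re}\, q(2\pi k/|\tau|)}<1$ for $t>0$, forcing $\hat{h}(k)=0$. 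Therefore all nonzero Fourier coefficients of $h$ vanish, so $h$ (and hence $h_\tau$) is constant a.e.; this is exactly the triviality of bounded harmonic functions, so $\process{L^{\tau}}$ is ergodic with respect to $dx_\tau/|\tau|$.

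The main subtlety — and the step I would be most careful about — is the rigorous justification of the Fourier argument for merely \emph{bounded measurable} harmonic functions rather than smooth ones: one must be sure that the identity $P_t h = h$ can be tested against the characters $e^{i\frac{2\pi\langle k,\cdot\rangle}{|\tau|}}$ to extract the coefficient relation, and that vanishing of all nonzero Fourier coefficients of an $L^1([0,\tau])$ function indeed implies it equals its mean a.e. (uniqueness of Fourier series on the torus). An alternative, cleaner route that avoids any smoothness issue is to argue directly at the level of the shift-invariant $\sigma$-algebra: if $g\in B_b([0,\tau])$ with $P^{\tau}_t g = g$ does not suffice in the form stated, one reduces to harmonic functions via the standard equivalence cited in the excerpt (from \cite{meyn-tweedie-2009}). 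I would also double-check that the continuity/measurability hypotheses implicitly needed for that equivalence are in force here; since $\process{L^{\tau}}$ inherits the Feller-type structure of $\process{L}$ on the compact group $[0,\tau]$, this should cause no difficulty, and the characters being genuine eigenfunctions of the semigroup is the crux that makes everything go through.
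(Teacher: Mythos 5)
Your proposal is correct and follows essentially the same route as the paper: the paper likewise reduces ergodicity to the statement that every bounded measurable $P^{\tau}_t$-harmonic function is constant $dx_\tau$-a.s.\ and then compares Fourier coefficients, with \eqref{eq1.17} forcing all nonzero coefficients to vanish. Your additional remarks (Fubini to extract the coefficient relation $e^{-tq(2\pi k/|\tau|)}\hat{h}(k)=\hat{h}(k)$, and uniqueness of Fourier series on the torus) simply spell out the details the paper compresses into ``by comparing the Fourier coefficients \dots we easily see''.
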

\begin{proof} First, recall that $\process{L^{\tau}}$ is ergodic if, and only if, the only bounded measurable functions satisfying  $$\int_{[0,\tau]}p_\tau(t,x_\tau,dy_\tau)f_\tau(y_\tau)=f_\tau(x_\tau),\quad x_\tau\in[0,\tau],\ t\geq0,$$ are  constant $dx_\tau$-a.s.
Now, by comparing the Fourier coefficients of the left and right hand side in the above relation, we easily see that (\ref{eq1.17}) implies that the above relation can be satisfied only for  constant $dx_\tau$-a.s. functions.
\end{proof}

Now, we prove Theorem \ref{tm1.3}.
\begin{proof}[Proof of Theorem \ref{tm1.3}]
The proof proceeds in four steps.

\textbf{Step 1.} In the first step, we explain  our strategy of the
proof. The idea of the proof is similar as in the proof of Theorem \ref{tm1.1}.
Namely,  again because of  the independence of
$\process{L}$ and $\process{B}$, \cite[Theorem 36.5]{sato-book} and Proposition \ref{p2.3}, in order to prove the relation in  (\ref{eq1.18}),
it suffices to prove that there
exists a Lebesgue measure zero set $B\in\mathcal{B}(\R^{\bar{d}})$,
such that for any initial distribution $\rho(dx)$ of $\process{L}$,
satisfying $\rho(B)=0$, we have
\begin{align}\label{eq8}n^{-1}\int_0^{nt}\mathcal{A}^{b}w^{i}(L_s)ds\stackrel{\hbox{\scriptsize{$\mathbb{P}^{\rho}\,
\textrm{-}\, \textrm{a.s.}$}}}{\xrightarrow{\hspace*{1cm}}}0,\quad
t\geq0,\ i=1,\ldots,d.\end{align}
Recall that
$w^{1},\ldots,w^{d}\in C^{2}_b(\R^{\bar{d}})$ are $\tau$-periodic.
Further, since the driving diffusion with jumps  is a L\'evy process (hence, it has constant coefficients), we again conclude that
for any $\tau$-periodic  $f:\R^{\bar{d}}\longrightarrow\R$, $f(L_t)=f(L_t^{\tau})$, $t\geq0$, and  that $\mathcal{A}^{b}f(x)$ is $\tau$-periodic for any $\tau$-periodic $f\in C^{2}_b(\R^{\bar{d}})$. Thus,  we can again switch from $\process{L}$ to $\process{L^{\tau}}$, which is, by \eqref{eq1.17}, ergodic (with respect to $dx_\tau/|\tau|$). Now,  the limiting behavior
in (\ref{eq8})  will follow by employing Proposition \ref{p2.4} and the Birkhoff ergodic theorem.

In order to prove the limiting behavior
in (\ref{eq1.19}), again because of the
independence of $\process{F}$ and $\process{B}$ and the scaling
property of $\process{B}$, we conclude that
it
suffices to prove that
there
exists a Lebesgue measure zero set $B\in\mathcal{B}(\R^{\bar{d}})$,
such that for any initial distribution $\rho(dx)$ of $\process{L}$,
satisfying $\rho(B)=0$,
\begin{align}\label{eq2.15}\left\{n^{-\frac{1}{2}}\int_0^{nt}v(L_s)ds\right\}_{t\geq0}\stackrel{\hbox{\scriptsize{$\textrm{d}$}}}{\longrightarrow}\process{\tilde{W}}\end{align}
under $\mathbb{P}^{\rho}(d\omega_V)$, where
$v(x)=(\mathcal{A}^{b}w^{1}(x),\ldots,\mathcal{A}^{b}w^{d}(x))$ and
$\process{\tilde{W}}$ is a zero-drift Brownian motion determined by
the covariance matrix $\tilde{C}:=C-\Sigma$ defined in (\ref{eq1.12}).
Now, we again employ
\cite[Theorem VIII.2.17]{jacod}, which states that
that the desired convergence is reduced to  the convergence (in probability) of the  modified characteristics of  $\left\{n^{-1/2}\int_0^{nt}v(L_s)ds\right\}_{t\geq0}$ to the modified characteristics of $\process{\tilde{W}}$.
Hence, we again explicitly compute the modified characteristics of $\left\{n^{-1/2}\int_0^{nt}v(L_s)ds\right\}_{t\geq0}$ (in terms of the L\'evy triplet of $\process{L}$) and, because of the $\tau$-periodicity of the L\'evy triplet of $\process{L}$, we  switch from $\process{L}$ to $\process{L^{\tau}}$ and apply the Birkhoff ergodic theorem. Finally,  to prove that under (\ref{eq1.20}) the  limit
in (\ref{eq2.15}) holds for any initial distribution of $\process{L}$, we consider the $L^{2}$-convergence of the modified characteristics of $\left\{n^{-1/2}\int_0^{nt}v(L_s)ds\right\}_{t\geq0}$ to the modified characteristics of $\process{\tilde{W}}$.

\textbf{Step 2.} In the second step, we prove the limiting behavior
in (\ref{eq8}). First, according to Proposition \ref{p2.4},
we have
$$\mathcal{A}^{b}w^{i}(L_t)=\mathcal{A}^{b}w^{i}(L^{\tau}_t)=\left(\mathcal{A}^{b}w^{i}\right)_\tau(L^{\tau}_t)=\mathcal{A}_\tau^{b}w^{i}_\tau(L^{\tau}_t),\quad
t\geq0,\ i=1,\ldots,d,$$ which yields
$$n^{-1}\int_0^{nt}\mathcal{A}^{b}w^{i}(L_s)ds=n^{-1}\int_0^{nt}\mathcal{A}_\tau^{b}w_\tau^{i}(L^{\tau}_s)ds,\quad
t\geq0,\ i=1,\ldots,d.$$ Further, according to Proposition \ref{p2.7}, the process $\process{L^{\tau}}$ is
ergodic (with respect to $dx_\tau/|\tau|$). Thus, the Birkhoff  ergodic theorem
entails
$$\mathbb{P}^{dx_\tau/|\tau|}\left(\lim_{n\longrightarrow\infty}n^{-1}\int_0^{nt}\mathcal{A}^{b}w^{i}(L_s)ds=|\tau|^{-1}t\int_{[0,\tau]}\mathcal{A}_\tau^{b}w_\tau^{i}(x_\tau)dx_\tau\right)=1,\quad i=1,\ldots,d.$$
Analogously as in the proof of Theorem \ref{tm1.1}, we conclude that
$$|\tau|^{-1}\int_{[0,\tau]}\mathcal{A}_\tau^{b}w_\tau^{i}(x_\tau)dx_\tau=0,\quad i=1,\ldots,d,$$ that is,

$$|\tau|^{-1}\int_{[0,\tau]}\mathbb{P}^{x_\tau}\left(\lim_{n\longrightarrow\infty}n^{-1}\int_0^{nt}\mathcal{A}^{b}w^{i}(L_s)ds=0\right)dx_\tau=1,\quad
i=1,\ldots,d.$$ Therefore, there exists a Lebesgue measure zero set
$B\in \mathcal{B}(\R^{\bar{d}})$ such that
$$\mathbb{P}^{x}\left(\lim_{n\longrightarrow\infty}n^{-1}\int_0^{nt}\mathcal{A}^{b}w^{i}(L_s)ds=0\right)=1,\quad x\in
B^{c},\ i=1,\ldots, d,$$ which proves the desired result.

\textbf{Step 3.} In the third step, we prove the limiting behavior
in (\ref{eq2.15}). We proceed similarly as in the proof of Theorem
\ref{tm1.1}.  Let
$\rho(dx)$ be an arbitrary initial distribution of $\process{L}$.
Then, again by \cite[Proposition 4.1.7]{ethier}, the processes
$$S^{i,n}_t:=n^{-\frac{1}{2}}\int_0^{nt}\mathcal{A}w^{i}(L_s)ds-n^{-\frac{1}{2}}w^{i}(L_{nt})+n^{-\frac{1}{2}}w^{i}(L_0),\quad i=1,\ldots,d,$$
are  $\mathbb{P}^{\rho}$-martingales. Now, by completely the same
arguments as in the proof of Theorem \ref{tm1.1} we deduce that the
semimartingale (modified) characteristics of $\process{S^{n}}$ are
given by
\begin{align*}
B_t^{n}&=0,\\ C^{i,j,n}_t&=n^{-1}\int_0^{nt}\langle \nabla
w^{i}(L_{s-}),
c\nabla w^{j}(L_{s-})\rangle ds,\quad i,j=1,\ldots,d,\\
\tilde{C}^{i,j,n}_t&=n^{-1}\int_0^{nt}\int_{\R^{\bar{d}}}\left(w^{i}(y+L_{s-})-w^{i}(L_{s-})\right)\left(w^{j}(y+L_{s-})-w^{j}(L_{s-})\right)\nu(dy)ds\\
&\ \ \ +n^{-1}\int_0^{nt}\langle \nabla w^{i}(L_{s-}), c\nabla
w^{j}(L_{s-})\rangle ds,\quad i,j=1,\ldots,d,\\
N^{n}(\omega,ds,B)&=\int_{\R^{\bar{d}}}1_{B}\left(n^{-\frac{1}{2}}w(y+L_{s-}(\omega))-n^{-\frac{1}{2}}w(L_{s-}(\omega))\right)\nu\left(dy\right)ds,\quad
B\in\mathcal{B}(\R^{\bar{d}}),
\end{align*}
where $w(x)=(w^{1}(x),\ldots,w^{d}(x))$. Recall that for the
truncation function we again use an arbitrary
$h:\R^{d}\longrightarrow\R^{d}$, such that $h(x)=x$ for all
$|x|\leq2\max_{i\in\{1,\ldots,d\}}||w^{i}||_\infty.$

 Now, according to
\cite[Theorem VIII.2.17]{jacod}, in order to prove that
$$\process{S^{n}}\stackrel{\hbox{\scriptsize{$\textrm{d}$}}}{\longrightarrow}\process{\tilde{W}},$$ under $\mathbb{P}^{\rho}(d\omega_V)$,
it suffices
to show that
\begin{align}\label{eq2.16}\int_0^{nt}\int_{\R^{d}}|g(y)|N^{n}(\omega,ds,dy)\stackrel{\hbox{\scriptsize{$\mathbb{P}^{\rho}\textrm{-a.s.}$}}}{\xrightarrow{\hspace*{1cm}}} 0\quad
\end{align} for all $t\geq0$ and all $g\in C_b(\R^{d})$
vanishing in a neighborhood around the origin,
and
\begin{align}\label{eq2.17}\tilde{C}^{n}_t\stackrel{\hbox{\scriptsize{$\mathbb{P}^{\rho}\textrm{-a.s.}$}}}{\xrightarrow{\hspace*{1cm}}}t\tilde{C}\end{align}
for  all $t\geq0$. The relation in
(\ref{eq2.16}) easily follows from the fact that the function $w(x)$
is bounded and $g(x)$ vanishes in a neighborhood around the origin. Also, note that (\ref{eq2.16}) holds for any initial distribution $\rho(dx)$ of $\process{L}$.
Now, we prove the relation in (\ref{eq2.17}).
Similarly as in the proof of Theorem \ref{tm1.1},  because of $\tau$-periodicity of
all components,
\begin{align*}\tilde{C}^{i,j,n}_t&=n^{-1}\int_0^{nt}\langle \nabla w^{i}(L^{\tau}_{s-}),
c\nabla w^{j}(L^{\tau}_{s-})\rangle ds \\ &\ \ \
+n^{-1}\int_0^{nt}\int_{\R^{\bar{d}}}\left(w^{i}(y+L^{\tau}_{s-})-w^{i}(L^{\tau}_{s-})\right)\left(w^{j}(y+L^{\tau}_{s-})-w^{j}(L^{\tau}_{s-})\right)\nu(dy)ds\end{align*}
for all $i,j=1,\ldots,d$.  Now, by similar arguments as in the first step,
Proposition \ref{p2.7} implies that $\process{L^{\tau}}$ is
ergodic (with respect to $dx_\tau/|\tau|$), hence  the Birkhoff ergodic theorem
entails that
$$\mathbb{P}^{dx_\tau/|\tau|}\left(\lim_{n\longrightarrow\infty}\tilde{C}^{i,j,n}_t=t\tilde{C}_{ij}\right)=|\tau|^{-1}\int_{[0,\tau]}\mathbb{P}^{x_\tau}\left(\lim_{n\longrightarrow\infty}\tilde{C}^{i,j,n}_t=t\tilde{C}_{ij}\right)dx_\tau=1,\quad i,j=1,\ldots,d.$$
Therefore, there exists a Lebesgue measure zero set
$B\in \mathcal{B}(\R^{\bar{d}})$ such that
$$\mathbb{P}^{x}\left(\lim_{n\longrightarrow\infty}\tilde{C}^{n}_t=t\tilde{C}\right)=1,\quad x\in
B^{c},\ i=1,\ldots, d,$$ which together with  \cite[Lemma VI.3.31]{jacod} proves (\ref{eq2.15}), and thus, (\ref{eq1.19}).

\textbf{Step 4.} In the fourth step, we prove that under (\ref{eq1.20}) the  limit
in (\ref{eq2.15}) holds for any initial distribution of $\process{L}$. We again employ \cite[Theorem VIII.2.17]{jacod}. In the third step we derived the semimartingale (modified) characteristics $(B^{n},C^{n},\tilde{C}^{n},N^{n})$ of the semimartingales $\process{S^{n}}$,  $n\geq1$, and proved that for any initial distribution $\rho(dx)$ of $\process{L}$,
$$\int_0^{nt}\int_{\R^{d}}|g(y)|N^{n}(\omega,ds,dy)\stackrel{\hbox{\scriptsize{$\mathbb{P}^{\rho}\textrm{-a.s.}$}}}{\xrightarrow{\hspace*{1cm}}} 0$$ for all $t\geq0$ and all $g\in C_b(\R^{d})$
vanishing in a neighborhood around the origin. Therefore, the desired result will be proven if we show that for any initial distribution $\rho(dx)$ of $\process{L}$,
$$\tilde{C}^{i,j,n}_t\stackrel{\hbox{\scriptsize{$L^{2}(\mathbb{P}^{\rho},\Omega)$}}}{\xrightarrow{\hspace*{1.2cm}}}t\tilde{C}_{ij}$$
for all $i,j=1,\ldots,d$ and all $t\geq0$.   We have
\begin{align*}\mathbb{E}^{\rho}\left[\left(\tilde{C}^{i,j,n}_t-t\tilde{C}_{ij}\right)^{2}\right]=\mathbb{E}^{\rho}\left[\left(\tilde{C}^{i,j,n}_t\right)^{2}\right]-2t\tilde{C}_{ij}\mathbb{E}^{\rho}\left[\tilde{C}^{i,j,n}_t\right]+t^{2}\tilde{C}_{ij}^{2}.\end{align*}
First, we show that
$\lim_{n\longrightarrow\infty}\mathbb{E}^{\rho}\left[\left(\tilde{C}^{i,j,n}_t\right)^{2}\right]=t^{2}\tilde{C}_{ij}^{2}$.
We have
\begin{align}\label{eq2.19}\mathbb{E}^{\rho}\left[\left(\tilde{C}^{i,j,n}_t\right)^{2}\right]=I^{n}_1+I_2^{n}+I_3^{n},\end{align}
where
\begin{align*}I_1^{n}&:=n^{-2}\sum_{k,l,p,q=1}^{\bar{d}}c_{kl}c_{pq}\int_0^{nt}\int_0^{nt}\mathbb{E}^{\rho}\left[\frac{\partial
w^{i}(L_{s-})}{\partial x_k}\frac{\partial w^{j}(L_{s-})}{\partial
x_l}\frac{\partial w^{i}(L_{u-})}{\partial x_p}\frac{\partial
w^{j}(L_{u-})}{\partial x_q}\right]dsdu,\\
I_2^{n}&:=2n^{-2}\sum_{k,l=1}^{\bar{d}}c_{kl}\int_0^{nt}\int_0^{nt}\mathbb{E}^{\rho}\Bigg[\frac{\partial
w^{i}(L_{s-})}{\partial x_k}\frac{\partial w^{j}(L_{s-})}{\partial
x_l}\int_{\R^{\bar{d}}}\left(w^{i}(y+L_{u-})-w^{i}(L_{u-})\right)\\&
\hspace{5cm}\left(w^{j}(y+L_{u-})-w^{j}(L_{u-})\right)\nu(dy)\Bigg]dsdu,
\\ I_3^{n}&:=n^{-2}\int_0^{nt}\int_0^{nt}\int_{\R^{\bar{d}}}\int_{\R^{\bar{d}}}\mathbb{E}^{\rho}\Big[\left(w^{i}(y+L_{s-})-w^{i}(L_{s-})\right)\left(w^{j}(y+L_{s-})-w^{j}(L_{s-})\right)
\\&
\hspace{4cm}\left(w^{i}(z+L_{u-})-w^{i}(L_{u-})\right)\left(w^{j}(z+L_{u-})-w^{j}(L_{u-})\right)\Big]\nu(dy)\nu(dz)dsdu.\end{align*}
Now, by  the same approach as in Proposition \ref{p2.6}, we get
\begin{align*}I_1^{n}&=2n^{-2}\sum_{k,l,p,q=1}^{\bar{d}}c_{kl}c_{pq}\int_0^{nt}\int_0^{u}\mathbb{E}^{\rho}\left[\frac{\partial
w^{i}(L_{s-})}{\partial x_k}\frac{\partial w^{j}(L_{s-})}{\partial
x_l}\frac{\partial w^{i}(L_{u-})}{\partial x_p}\frac{\partial
w^{j}(L_{u-})}{\partial
x_q}\right]dsdu\\&=\frac{2^{5}\pi^{4}}{n^{2}|\tau|^{4}}\sum_{k,l,p,q=1}^{\bar{d}}c_{kl}c_{pq}\int_0^{nt}\int_0^{u}\sum_{a,b,c,d\in\ZZ^{\bar{d}}}a_kb_lc_pd_q\hat{w}^{i}(a)\hat{w}^{j}(b)\hat{w}^{i}(c)\hat{w}^{j}(d)\\
& \hspace{6.7cm} \mathbb{E}^{\rho}\left[e^{i\frac{2\pi\langle
a+b,L_s\rangle}{|\tau|}}e^{i\frac{2\pi\langle
c+d,L_u\rangle}{|\tau|}}\right]dsdu\\&=\frac{2^{5}\pi^{4}}{n^{2}|\tau|^{4}}\sum_{k,l,p,q=1}^{\bar{d}}c_{kl}c_{pq}\int_0^{nt}\int_0^{u}\sum_{a,b,c,d\in\ZZ^{\bar{d}}}a_kb_lc_pd_q\hat{w}^{i}(a)\hat{w}^{j}(b)\hat{w}^{i}(c)\hat{w}^{j}(d)\hat{\rho}\left({\scriptstyle\frac{2\pi(a+b+c+d)}{|\tau|}}\right)\\
& \hspace{6.7cm} e^{-(u-s)q\left(\frac{2\pi
(c+d)}{|\tau|}\right)}e^{-sq\left(\frac{2\pi
(a+b+c+d)}{|\tau|}\right)}dsdu\\&=\frac{2^{4}\pi^{4}t^{2}}{|\tau|^{4}}\sum_{k,l,p,q=1}^{\bar{d}}c_{kl}c_{pq}\sum_{a,c\in\ZZ^{\bar{d}}}a_ka_lc_pc_q\hat{w}^{i}(a)\hat{w}^{j}(-a)\hat{w}^{i}(c)\hat{w}^{j}(-c)\\
&\ \ \
+\frac{2^{5}\pi^{4}}{n^{2}|\tau|^{4}}\sum_{k,l,p,q=1}^{\bar{d}}c_{kl}c_{pq}\int_0^{nt}\int_0^{u}\sum_{{a,b,c,d\in\ZZ^{\bar{d}}}\atop{
a+b\neq0\,\textrm{or}\,
c+d\neq0}}a_kb_lc_pd_q\hat{w}^{i}(a)\hat{w}^{j}(b)\hat{w}^{i}(c)\hat{w}^{j}(d)\hat{\rho}\left({\scriptstyle\frac{2\pi(a+b+c+d)}{|\tau|}}\right)\\&
\hspace{7.7cm}e^{-(u-s)q\left(\frac{2\pi
(c+d)}{|\tau|}\right)}e^{-sq\left(\frac{2\pi
(a+b+c+d)}{|\tau|}\right)}dsdu,\end{align*} where $\hat{\rho}(\xi)$
denotes the characteristic function of the probability measure
$\rho(dx).$ Note that the change of orders of integrations and
summations is justified by (\ref{eq1.20}). Finally, again by
applying (\ref{eq1.20}), it is easy to see that
\begin{align}\label{eq2.20}\lim_{n\longrightarrow\infty}I_1^{n}=\frac{2^{4}\pi^{4}t^{2}}{|\tau|^{4}}\sum_{k,l,p,q=1}^{\bar{d}}c_{kl}c_{pq}\sum_{a,c\in\ZZ^{\bar{d}}}a_ka_lc_pc_q\hat{w}^{i}(a)\hat{w}^{j}(-a)\hat{w}^{i}(c)\hat{w}^{j}(-c).\end{align}
\newpage
Similarly, we have
\begin{align*}I_2^{n}&=4n^{-2}\sum_{k,l=1}^{\bar{d}}c_{kl}\int_0^{nt}\int_0^{u}\mathbb{E}^{\rho}\Bigg[\frac{\partial
w^{i}(L_{s-})}{\partial x_k}\frac{\partial w^{j}(L_{s-})}{\partial
x_l}\int_{\R^{\bar{d}}}\left(w^{i}(y+L_{u-})-w^{i}(L_{u-})\right)\\&
\hspace{5cm}\left(w^{j}(y+L_{u-})-w^{j}(L_{u-})\right)\nu(dy)\Bigg]dsdu\\&=
-\frac{2^{4}\pi^{2}}{n^{2}|\tau|^{2}}\sum_{k,l=1}^{\bar{d}}c_{kl}\int_0^{nt}\int_0^{u}\sum_{a,b,c,d\in\ZZ^{\bar{d}}}a_kb_l\hat{w}^{i}(a)\hat{w}^{j}(b)\hat{w}^{i}(c)\hat{w}^{j}(d)\mathbb{E}^{\rho}\left[e^{i\frac{2\pi\langle
a+b,L_s\rangle}{|\tau|}}e^{i\frac{2\pi\langle
c+d,L_u\rangle}{|\tau|}}\right]\\&\hspace{6cm}\int_{\R^{\bar{d}}}\left(e^{i\frac{2\pi\langle
c,y\rangle}{|\tau|}}-1\right)\left(e^{i\frac{2\pi\langle
d,y\rangle}{|\tau|}}-1\right)\nu(dy)dsdu\\&=-\frac{2^{4}\pi^{2}}{n^{2}|\tau|^{2}}\sum_{k,l=1}^{\bar{d}}c_{kl}\int_0^{nt}\int_0^{u}\sum_{a,b,c,d\in\ZZ^{\bar{d}}}a_kb_l\hat{w}^{i}(a)\hat{w}^{j}(b)\hat{w}^{i}(c)\hat{w}^{j}(d)\hat{\rho}\left({\scriptstyle\frac{2\pi(a+b+c+d)}{|\tau|}}\right)e^{-(u-s)q\left(\frac{2\pi(c+d)}{|\tau|}\right)}\\&\hspace{4.9cm}e^{-sq\left(\frac{2\pi(a+b+c+d)}{|\tau|}\right)}\int_{\R^{\bar{d}}}\left(e^{i\frac{2\pi\langle
c,y\rangle}{|\tau|}}-1\right)\left(e^{i\frac{2\pi\langle
d,y\rangle}{|\tau|}}-1\right)\nu(dy)dsdu\\&=\frac{2^{4}\pi^{2}t^{2}}{|\tau|^{2}}\sum_{k,l=1}^{\bar{d}}c_{k,l}\sum_{a,c\in\ZZ^{\bar{d}}}a_ka_l\hat{w}^{i}(a)\hat{w}^{j}(-a)\hat{w}^{i}(c)\hat{w}^{j}(-c)\int_{\R^{\bar{d}}}\left(1-\cos\left({\scriptstyle\frac{2\pi\langle
c,y\rangle}{|\tau|}}\right)\right)\nu(dy)\\ &\ \ \
-\frac{2^{4}\pi^{2}}{n^{2}|\tau|^{2}}\sum_{k,l=1}^{\bar{d}}c_{k,l}\int_0^{nt}\int_0^{u}\sum_{{a,b,c,d\in\ZZ^{\bar{d}}}\atop{
a+b\neq0\,\textrm{or}\,
c+d\neq0}}a_kb_l\hat{w}^{i}(a)\hat{w}^{j}(b)\hat{w}^{i}(c)\hat{w}^{j}(d)\hat{\rho}\left({\scriptstyle\frac{2\pi(a+b+c+d)}{|\tau|}}\right)\\&\hspace{2.5cm}e^{-(u-s)q\left(\frac{2\pi(c+d)}{|\tau|}\right)}e^{-sq\left(\frac{2\pi(a+b+c+d)}{|\tau|}\right)}\int_{\R^{\bar{d}}}\left(e^{i\frac{2\pi\langle
c,y\rangle}{|\tau|}}-1\right)\left(e^{i\frac{2\pi\langle
d,y\rangle}{|\tau|}}-1\right)\nu(dy)dsdu.
\end{align*}
Again, by applying (\ref{eq1.20}), we get
\begin{align}\label{eq2.21}\lim_{n\longrightarrow\infty}I_2^{n}=\frac{2^{4}\pi^{2}t^{2}}{|\tau|^{2}}\sum_{k,l=1}^{\bar{d}}c_{k,l}\sum_{a,c\in\ZZ^{\bar{d}}}&a_ka_l\hat{w}^{i}(a)\hat{w}^{j}(-a)\hat{w}^{i}(c)\hat{w}^{j}(-c)\nonumber\\&\int_{\R^{\bar{d}}}\left(1-\cos\left({\scriptstyle\frac{2\pi\langle
c,y\rangle}{|\tau|}}\right)\right)\nu(dy).\end{align} Finally, we
have
\begin{align*}I_3^{n}&=2n^{-2}\int_0^{nt}\int_0^{u}\int_{\R^{\bar{d}}}\int_{\R^{\bar{d}}}\mathbb{E}^{\rho}\Big[\left(w^{i}(y+L_{s-})-w^{i}(L_{s-})\right)\left(w^{j}(y+L_{s-})-w^{j}(L_{s-})\right)
\\&
\hspace{4cm}\left(w^{i}(z+L_{u-})-w^{i}(L_{u-})\right)\left(w^{j}(z+L_{u-})-w^{j}(L_{u-})\right)\Big]\nu(dy)\nu(dz)dsdu\\&=
2n^{-2}\int_0^{nt}\int_0^{u}\sum_{a,b,c,d\in\ZZ^{\bar{d}}}\hat{w}^{i}(a)\hat{w}^{j}(b)\hat{w}^{i}(c)\hat{w}^{j}(d)\mathbb{E}^{\rho}\left[e^{i\frac{2\pi\langle
a+b,L_s\rangle}{|\tau|}}e^{i\frac{2\pi\langle
c+d,L_u\rangle}{|\tau|}}\right]\\&\hspace{2cm}\int_{\R^{\bar{d}}}\int_{\R^{\bar{d}}}\left(e^{i\frac{2\pi\langle
a,y\rangle}{|\tau|}}-1\right) \left(e^{i\frac{2\pi\langle
b,y\rangle}{|\tau|}}-1\right)\left(e^{i\frac{2\pi\langle
c,z\rangle}{|\tau|}}-1\right)\left(e^{i\frac{2\pi\langle
d,z\rangle}{|\tau|}}-1\right)\nu(dy)\nu(dz)dsdu
\end{align*}\newpage
\begin{align*}&=2n^{-2}\int_0^{nt}\int_0^{u}\sum_{a,b,c,d\in\ZZ^{\bar{d}}}\hat{w}^{i}(a)\hat{w}^{j}(b)\hat{w}^{i}(c)\hat{w}^{j}(d)\hat{\rho}\left({\scriptstyle\frac{2\pi(a+b+c+d)}{|\tau|}}\right)e^{-(u-s)q\left(\frac{2\pi(c+d)}{|\tau|}\right)}e^{-sq\left(\frac{2\pi(a+b+c+d)}{|\tau|}\right)}\\&\hspace{1.8cm}\int_{\R^{\bar{d}}}\int_{\R^{\bar{d}}}\left(e^{i\frac{2\pi\langle
a,y\rangle}{|\tau|}}-1\right)\left(e^{i\frac{2\pi\langle
b,y\rangle}{|\tau|}}-1\right)\left(e^{i\frac{2\pi\langle
c,z\rangle}{|\tau|}}-1\right)\left(e^{i\frac{2\pi\langle
d,z\rangle}{|\tau|}}-1\right)\nu(dy)\nu(dz)dsdu\\&=4t^{2}\sum_{a,c\in\ZZ^{\bar{d}}}\hat{w}^{i}(a)\hat{w}^{j}(-a)\hat{w}^{i}(c)\hat{w}^{j}(-c)\int_{\R^{\bar{d}}}\int_{\R^{\bar{d}}}\left(1-\cos\left({\scriptstyle\frac{2\pi\langle
a,y\rangle}{|\tau|}}\right)\right)\left(1-\cos\left({\scriptstyle\frac{2\pi\langle
c,z\rangle}{|\tau|}}\right)\right)\nu(dy)\nu(dz)\\&\ \ \
+2n^{-2}\int_0^{nt}\int_0^{u}\sum_{{a,b,c,d\in\ZZ^{\bar{d}}}\atop{
a+b\neq0\,\textrm{or}\,
c+d\neq0}}\hat{w}^{i}(a)\hat{w}^{j}(b)\hat{w}^{i}(c)\hat{w}^{j}(d)\hat{\mu}\left({\scriptstyle\frac{2\pi(a+b+c+d)}{|\tau|}}\right)e^{-(u-s)q\left(\frac{2\pi(c+d)}{|\tau|}\right)}e^{-sq\left(\frac{2\pi(a+b+c+d)}{|\tau|}\right)}\\&\hspace{1.6cm}\int_{\R^{\bar{d}}}\int_{\R^{\bar{d}}}\left(e^{i\frac{2\pi\langle
a,y\rangle}{|\tau|}}-1\right)\left(e^{i\frac{2\pi\langle
b,y\rangle}{|\tau|}}-1\right)\left(e^{i\frac{2\pi\langle
c,z\rangle}{|\tau|}}-1\right)\left(e^{i\frac{2\pi\langle
d,z\rangle}{|\tau|}}-1\right)\nu(dy)\nu(dz)dsdu.
\end{align*}
Again, (\ref{eq1.20}) implies that
\begin{align}\label{eq2.22}\lim_{n\longrightarrow\infty}I_3^{n}=4t^{2}\sum_{a,c\in\ZZ^{\bar{d}}}&\hat{w}^{i}(a)\hat{w}^{j}(-a)\hat{w}^{i}(c)\hat{w}^{j}(-c)\nonumber\\& \int_{\R^{\bar{d}}}\int_{\R^{\bar{d}}}\left(1-\cos\left({\scriptstyle\frac{2\pi\langle
a,y\rangle}{|\tau|}}\right)\right)\left(1-\cos\left({\scriptstyle\frac{2\pi\langle
c,z\rangle}{|\tau|}}\right)\right)\nu(dy)\nu(dz).\end{align} Now, by
putting together (\ref{eq2.19}), (\ref{eq2.20}), (\ref{eq2.21}) and
(\ref{eq2.22}), Proposition \ref{p2.6} implies
$$\lim_{n\longrightarrow\infty}\mathbb{E}^{\rho}\left[\left(\tilde{C}^{i,j,n}_t\right)^{2}\right]=t^{2}\tilde{C}_{ij}^{2}.$$
In completely the same way we get
$$\lim_{n\longrightarrow\infty}\mathbb{E}^{\rho}\left[\tilde{C}^{i,j,n}_t\right]=t\tilde{C}_{ij}.$$
Thus,
$$\tilde{C}^{i,j,n}_t\stackrel{\hbox{\scriptsize{$L^{2}(\mathbb{P}^{\rho},\Omega)$}}}{\xrightarrow{\hspace*{1.2cm}}}t\tilde{C}_{ij},\quad i,j=1,\ldots,d,$$
that is, for any initial distribution $\rho(dx)$ of $\process{L}$,
$$\process{S^{n}}\stackrel{\hbox{\scriptsize{$\textrm{d}$}}}{\longrightarrow}\process{\tilde{W}},$$ under $\mathbb{P}^{\rho}(d\omega_V)$. Finally, since the function
$w(x)$ is bounded, \cite[Lemma VI.3.31]{jacod} again implies the
convergence in (\ref{eq2.15}), and thus, in (\ref{eq1.19}).
\end{proof}

\subsection{Comments on the Condition in \eqref{eq1.17}} \label{sec-Comment}

\ \ \ \ In connection to Proposition \ref{p2.7}, note that  if (\ref{eq1.17}) is not satisfied for some $k_0\neq0$, then we cannot automatically conclude that $\process{L^{\tau}}$ is not ergodic. For example, take a one-dimensional L\'evy process $\process{L}$ with  symbol of the form $q(\xi)=ib\xi$, $b\neq0$. On the other hand, in the dimension $\bar{d}\geq2$ or when $b=0$ (in any dimension), $\process{L^{\tau}}$ is not ergodic.
\begin{proposition} \label{p2.8}Let  $\process{L}$ be a $\bar{d}$-dimensional  L\'evy
process with symbol $q(\xi)$  not satisfying the condition in \eqref{eq1.17}. Then, $\process{L^{\tau}}$ is not strongly ergodic $($with respect to $dx_\tau/|\tau|$$)$.
\end{proposition}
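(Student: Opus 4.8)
The plan is to exhibit a single bounded $\tau$-periodic function on which the projected semigroup $\process{P^{\tau}}$ does not relax to its spatial mean, which is incompatible with convergence in total variation of $\mathbb{P}^{x_\tau}_\tau(L^{\tau}_t\in\cdot)$ to $\pi_\tau(dx_\tau):=dx_\tau/|\tau|$. Since $|\mathbb{E}^{0}[e^{i\langle\xi,L_t\rangle}]|=e^{-t\,{\rm Re}\,q(\xi)}\le 1$ for all $t\ge0$, one has ${\rm Re}\,q(\xi)\ge0$ for every $\xi\in\R^{\bar d}$; hence the failure of \eqref{eq1.17} means there is some $k_0\in\ZZ^{\bar d}\setminus\{0\}$ with ${\rm Re}\,q(2\pi k_0/|\tau|)=0$. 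Recall that $\pi_\tau$ is an invariant probability measure for $\process{L^{\tau}}$, as established in the discussion preceding Proposition \ref{p2.6}.

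Next I would take the $k_0$-th character $f(x):=e^{i\frac{2\pi\langle k_0,x\rangle}{|\tau|}}$; it is $\tau$-periodic and bounded with $|f|\equiv1$, and its restriction $f_\tau$ to $[0,\tau]$ satisfies $\int_{[0,\tau]}f_\tau\,d\pi_\tau=0$ because $k_0\ne0$ (orthogonality of characters). Using the spatial homogeneity of $\process{L}$ and the identity $\mathbb{E}^{0}[e^{i\langle\xi,L_t\rangle}]=e^{-tq(\xi)}$ — this is precisely the computation already used, in the opposite direction, in the proof of Proposition \ref{p2.7} — one obtains
\[
P^{\tau}_tf_\tau(x_\tau)=e^{-tq\left(\frac{2\pi k_0}{|\tau|}\right)}f_\tau(x_\tau),\qquad t\ge0,\ x_\tau\in[0,\tau],
\]
so that $|P^{\tau}_tf_\tau(x_\tau)|=e^{-t\,{\rm Re}\,q(2\pi k_0/|\tau|)}\,|f_\tau(x_\tau)|=1$ for every $t\ge0$ and every $x_\tau\in[0,\tau]$.

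Finally I would derive the contradiction. Suppose $\process{L^{\tau}}$ were strongly ergodic with respect to $\pi_\tau$, i.e., $\|\mathbb{P}^{x_\tau}_\tau(L^{\tau}_t\in\cdot)-\pi_\tau\|_{TV}\longrightarrow0$ as $t\to\infty$ for every $x_\tau\in[0,\tau]$. Since convergence in total variation implies convergence of the integrals of the bounded function $f_\tau$, and $\int_{[0,\tau]}f_\tau\,d\mathbb{P}^{x_\tau}_\tau(L^{\tau}_t\in\cdot)=P^{\tau}_tf_\tau(x_\tau)$, we would get $P^{\tau}_tf_\tau(x_\tau)\longrightarrow\int_{[0,\tau]}f_\tau\,d\pi_\tau=0$, contradicting $|P^{\tau}_tf_\tau(x_\tau)|=1$ for all $t\ge0$. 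Hence $\process{L^{\tau}}$ is not strongly ergodic with respect to $\pi_\tau$. (If one prefers to stay with real-valued test functions, run the same argument for ${\rm Re}\,f$ and ${\rm Im}\,f$: both are bounded, $\tau$-periodic, and have zero $\pi_\tau$-mean, while $(P^{\tau}_t\,{\rm Re}\,f_\tau)^{2}+(P^{\tau}_t\,{\rm Im}\,f_\tau)^{2}\equiv1$ prevents both from tending to $0$.) The argument is elementary; the only identity deserving care is the formula for $P^{\tau}_tf_\tau$, which is exactly the ingredient already exploited in the proof of Proposition \ref{p2.7}, so I anticipate no genuine obstacle.
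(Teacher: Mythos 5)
Your proof is correct, and it reaches the conclusion by a genuinely different mechanism than the paper. Both arguments start from the same observation (since ${\rm Re}\,q\geq 0$ always, failure of \eqref{eq1.17} gives some $k_0\neq 0$ with ${\rm Re}\,q(2\pi k_0/|\tau|)=0$, so the characteristic function has modulus one at that frequency), but the paper then exploits this on the level of the transition \emph{measures}: from $\mathbb{E}^{0}\bigl[\cos\langle \tfrac{2\pi k_0}{|\tau|},L_t-tx_0\rangle\bigr]=1$ it deduces that $p(t,0,dy)$ is supported on the countable family of hyperplanes $\{y:\langle k_0,y-tx_0\rangle\in|\tau|\ZZ\}$, hence is singular with respect to Lebesgue measure, so the total variation distance to $dx_\tau/|\tau|$ never decreases below its maximal value. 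You instead exploit it on the level of the \emph{semigroup}: the character $e^{i2\pi\langle k_0,\cdot\rangle/|\tau|}$ is an eigenfunction of $P^{\tau}_t$ with unimodular eigenvalue $e^{-tq(2\pi k_0/|\tau|)}$ and zero $\pi_\tau$-mean, which is incompatible with total variation relaxation to $\pi_\tau$. Your route is more elementary (no support/singularity analysis) and in fact proves slightly more: since the character is continuous and bounded on the torus, it rules out even weak convergence of $\mathbb{P}^{x_\tau}_\tau(L^{\tau}_t\in\cdot)$ to $dx_\tau/|\tau|$, from every starting point. The paper's route, on the other hand, yields the quantitative statement that the TV distance is identically maximal, and the hyperplane-support information it extracts is reused in the proof of Proposition \ref{p2.9}(ii). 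One small caution: the identity $P^{\tau}_tf_\tau=e^{-tq(2\pi k_0/|\tau|)}f_\tau$ and the orthogonality $\int_{[0,\tau]}f_\tau\,d\pi_\tau=0$ rely on $e^{i2\pi\langle k_0,x\rangle/|\tau|}$ being a genuine character of $\R^{\bar d}/\tau\ZZ^{\bar d}$; this is the paper's own Fourier convention (used in \eqref{eq1.17} and in Propositions \ref{p2.6} and \ref{p2.7}), so you are consistent with the framework, but it is worth being aware that the convention is only literally accurate when interpreted as the standard torus characters.
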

\begin{proof} By the assumption, there exists $k_0\in\ZZ^{\bar{d}}$, $k_0\neq0$, such that ${\rm Re}\, q(2\pi {\it k}_0/|\tau|)=0$.
Hence, for this  $k_0\in\ZZ^{\bar{d}}$, we have
 $$\mathbb{E}^{0}\left[e^{i\left\langle\frac{2\pi k_0}{|\tau|},L_t\right\rangle}\right]=e^{i\left\langle\frac{2\pi k_0}{|\tau|},tx_0\right\rangle}$$ for some $x_0\in\R^{\bar{d}}$. This yields
$$\mathbb{E}^{0}\left[\cos{\scriptstyle\left\langle\frac{2\pi k_0}{|\tau|},L_t-tx_0\right\rangle}\right]=\int_{\R^{\bar{d}}}\cos{\scriptstyle\left\langle\frac{2\pi k_0}{|\tau|},y-tx_0\right\rangle}p(t,0,dy)=1.$$
Thus,  $p(t,0,dy)$ is supported on the set $\{y\in\R^{\bar{d}}:\langle k_0,y-tx_0\rangle=l|\tau|,\ l\in\ZZ\}$, $t>0$.
In particular,  $p(t,0,dy)$ is singular with respect to $dx$, which proves the claim.
\end{proof}

Further, the condition in $(\ref{eq1.17})$ is also not equivalent with the strong ergodicity  of $\process{L^{\tau}}$. For example, let $\process{L}$ be a one-dimensional L\'evy process with symbol of the form $q(\xi)=2(1-\cos(\kappa\xi))$ or, equivalently, with the L\'evy triplet $(0,0,\delta_{-\kappa}(dy)+\delta_\kappa(dy))$, where $\kappa>0$ is such that $\kappa/\tau\notin\QQ.$ However, as a direct consequence of Proposition \ref{p2.8} we get  that condition (\textbf{C5}) automatically  implies the relation in (\ref{eq1.17}).
\begin{proposition}\label{p2.9} Let  $\process{L}$ be a $\bar{d}$-dimensional L\'evy process with symbol $q(\xi)$ and L\'evy triplet $(b,0,\nu(dy))$.
\begin{itemize}
  \item [$(i)$] If $\nu(dy)=0$, then for any $\beta>0$, any $\tau$-periodic $f\in C_b^{2}(\R^{\bar{d}})$ and any initial distribution $\rho(dx)$ of  $\process{L}$,
$$n^{-\beta}\int_0^{nt}\mathcal{A}^{b}f(L_s)ds\stackrel{\hbox{\scriptsize{$\mathbb{P}^{\rho}\textrm{-a.s.}$}}}{\xrightarrow{\hspace*{1cm}}}0.$$
\item [$(ii)$] If $\bar{d}=1$ and if \eqref{eq1.17} is not satisfied for some $k_0\neq0$, then
for any $\beta>0$, any $\tau/|k_0|$-periodic $f\in C_b^{2}(\R)$ and any initial distribution $\rho(dx)$ of  $\process{L}$,
$$n^{-\beta}\int_0^{nt}\mathcal{A}^{b}f(L_s)ds\stackrel{\hbox{\scriptsize{$\mathbb{P}^{\rho}\textrm{-a.s.}$}}}{\xrightarrow{\hspace*{1cm}}}0.$$
\end{itemize}
\end{proposition}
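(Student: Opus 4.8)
The plan is, for both parts, to show that the random integral $\int_{0}^{nt}\mathcal{A}^{b}f(L_{s})\,ds$ is bounded in absolute value by $2\|f\|_{\infty}$, uniformly in $n$ and along almost every path, for \emph{every} initial law $\rho(dx)$; multiplying by $n^{-\beta}$ with $\beta>0$ then forces the (sure, hence a.s.) convergence to $0$, regardless of $\rho$. The bound will come from recognizing $\mathcal{A}^{b}f(L_{s})$ as an exact $s$-derivative, so that the integral telescopes.

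For part $(i)$ the L\'evy triplet $(b,0,0)$ means that $\process{L}$ started from $x$ is the deterministic line $L_{s}=x+bs$, and, by Proposition~\ref{p2.3}, a $\tau$-periodic $f\in C_{b}^{2}(\R^{\bar d})$ lies in $\mathcal{D}_{\mathcal{A}^{b}}$ with $\mathcal{A}^{b}f(x)=\langle b,\nabla f(x)\rangle$. Hence $\mathbb{P}^{\rho}$-a.s.
\[
\int_{0}^{nt}\mathcal{A}^{b}f(L_{s})\,ds=\int_{0}^{nt}\langle b,\nabla f(L_{0}+bs)\rangle\,ds=f(L_{0}+bnt)-f(L_{0}),
\]
which is bounded by $2\|f\|_{\infty}$, and we are done.

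For part $(ii)$ put $\kappa:=\tau/|k_{0}|$. Since the diffusion coefficient vanishes, $\mathrm{Re}\,q(2\pi k_{0}/\tau)=\int_{\R}\big(1-\cos(2\pi k_{0}y/\tau)\big)\nu(dy)$, so its vanishing forces $\nu$ to be concentrated on the lattice $\kappa\ZZ$; the atoms of $\nu$ in $[-1,1]$ being then finitely many, $\int_{|y|\le1}|y|\,\nu(dy)<\infty$. By the proof of Proposition~\ref{p2.8}, for each fixed $s$ the law $\mathbb{P}^{0}(L_{s}\in\cdot)$ is supported on $sx_{0}+\kappa\ZZ$ with $x_{0}=b-\int_{|y|\le1}y\,\nu(dy)$. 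Because $\kappa\ZZ$ is closed and $sx_{0}+\kappa\ZZ$ varies continuously in $s$, intersecting this event over rational $s$ and passing to the limit along rationals decreasing to an arbitrary $s$ (using c\`adl\`ag paths) upgrades it to: $\mathbb{P}^{\rho}$-a.s.\ one has $L_{s}-L_{0}-sx_{0}\in\kappa\ZZ$ for \emph{all} $s\ge0$ — here we also use that $\{L_{s}-L_{0}\}_{s\ge0}$ is independent of $L_{0}$ and equals in law the process started at the origin, which is what carries the argument from $\mathbb{P}^{0}$ to general $\rho$. Since $f$ is $\kappa$-periodic, so is $\mathcal{A}^{b}f$ (Proposition~\ref{p2.3} with period $\kappa$), whence $\mathcal{A}^{b}f(L_{s})=\mathcal{A}^{b}f(L_{0}+sx_{0})$ for all $s$, a.s.; and because every point of $\mathrm{supp}\,\nu\subseteq\kappa\ZZ$ is a period of $f$, the integral term in \eqref{eq1.5} drops out and $\mathcal{A}^{b}f(x)=\langle x_{0},\nabla f(x)\rangle$. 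Therefore
\[
\int_{0}^{nt}\mathcal{A}^{b}f(L_{s})\,ds=\int_{0}^{nt}\langle x_{0},\nabla f(L_{0}+sx_{0})\rangle\,ds=f(L_{0}+ntx_{0})-f(L_{0}),
\]
again bounded by $2\|f\|_{\infty}$, which proves the claim.

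The routine parts are the two applications of the fundamental theorem of calculus; the two points needing care are (a) promoting ``$L_{s}-L_{0}-sx_{0}\in\kappa\ZZ$ for each fixed $s$'' to ``for all $s$ simultaneously'', where closedness of the lattice and right-continuity of the paths enter, and (b) checking that on $\kappa$-periodic test functions the non-local generator $\mathcal{A}^{b}$ reduces to the plain directional derivative $\langle x_{0},\nabla\,\cdot\,\rangle$, with the constant $x_{0}$ matching the one in the proof of Proposition~\ref{p2.8}. I expect (a) to be the main (though mild) obstacle; once it is settled, everything collapses to the telescoping identities above.
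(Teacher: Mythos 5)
Your proposal is correct and follows essentially the same route as the paper: in both parts everything reduces, via periodicity of $f$ and the lattice structure forced on $\nu$ by $\mathrm{Re}\,q(2\pi k_0/|\tau|)=0$, to the telescoping identity $\int_0^{nt}\mathcal{A}^{b}f(L_s)\,ds=f(L_0+nt\,\cdot\,\mathrm{drift})-f(L_0)$, which is bounded by $2\|f\|_\infty$ and killed by $n^{-\beta}$. The only cosmetic difference is in part $(ii)$: the paper gets pathwise lattice confinement by writing $L_t=L_0+bt+S_{N_t}$ as a compound Poisson process with drift living on $S=\{\tau l/k_0:l\in\ZZ\}$ (noting $\nu(\R)<\infty$), whereas you upgrade the fixed-time support statement from the proof of Proposition~\ref{p2.8} to all times via rational times and right-continuity and then check $\mathcal{A}^{b}f(x)=x_0f'(x)$ with $x_0=b-\int_{\{|y|\le1\}}y\,\nu(dy)$ — both arguments are valid and your bookkeeping of the compensated drift is, if anything, slightly more explicit than the paper's.
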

\begin{proof}
\begin{itemize}
  \item [(i)] Let $f\in C_b^{2}(\R^{\bar{d}})$ and  $\rho(dx)$ be an arbitrary  $\tau$-periodic function and an arbitrary initial distribution  of  $\process{L}$, respectively. Then, for any $\beta>0$, we have
\begin{align*}\lim_{n\longrightarrow\infty}n^{-\beta}\int_0^{nt}\mathcal{A}^{b}f(L_s)ds&=\lim_{n\longrightarrow\infty}n^{-\beta}\int_0^{nt}\langle b,\nabla
f(L_0+bs)\rangle ds\\&=
\lim_{n\longrightarrow\infty}n^{-\beta}\int_0^{nt}\frac{\partial
f(L_0+bs)}{\partial
s}ds\\&=\lim_{n\longrightarrow\infty}n^{-\beta}(f(L_0+bnt)-f(L_0))\\&=0,\quad
\mathbb{P}^{\rho}\textrm{-a.s.}\end{align*}
  \item [(ii)] First, similarly as before, we conclude that $\nu(dy)$ is supported on $S:=\{\tau l/k_0:\ l\in\ZZ\}.$ This yields that
$$\mathcal{A}^{b}g(x)=bg'(x)+\int_{\R}(g(y+x)-g(x))\nu(dy),\quad g\in C_b^{2}(\R),$$ $\nu(\R)<\infty$ and
$L_t=L_0+bt+S_{N_t},$ $ t\geq0,$ where $\chain{S}$ is a random walk on $S$ with the jump distribution
$\mathbb{P}^{0}(S_1\in dy):=\nu(dy)/\nu(\R)$ and $\process{N}$ is a Poisson process with parameter $\nu(\R)$ independent of $\chain{S}$.
Now, let $f\in C_b^{2}(\R)$ be an arbitrary $\tau/|k_0|$ periodic function. Then, for any $\beta>0$, we have
\begin{align*}n^{-\beta}\int_0^{nt}\mathcal{A}^{b}f(L_s)ds=&n^{-\beta}\int_0^{nt}bf'(L_0+bs+S_{N_s})ds\\&
+n^{-\beta}\int_0^{nt}\int_{\R}(f(y+L_0+bs+S_{N_s})-f(L_0+bs+S_{N_s}))\nu(dy)ds\\=&n^{-\beta}\int_0^{nt}bf'(L_0+bs)ds\\=&n^{-\beta}\int_0^{nt}\frac{\partial
f(L_0+bs)}{\partial
s}ds\\=&n^{-\beta}(f(L_0+bnt)-f(L_0)),\end{align*} where in the second step we used the facts that $f(x)$ is $\tau/|k_0|$-periodic and $\chain{S}$ and $\nu(dy)$ live on $S$. Now, by letting $n\longrightarrow\infty$, the desired result follows.\end{itemize}
\end{proof}

\section{Discussions on the Cases with General Diffusions with Jumps} \label{sec-Discussions}

\ \ \ \ In Theorems \ref{tm1.1}, \ref{tm1.2} and \ref{tm1.3}, we have shown the LLN and CLT for the process $\process{X}$ in \eqref{eq1.2}  driven by the process $\process{F}$, a diffusion with jumps, satisfying the sets of assumed conditions, particularly the ergodicity property.
In this section, we discuss the (strong) ergodicity property of general diffusions with jumps and the limiting behaviors in \eqref{eq1.3} and \eqref{eq1.4}  when the velocity field $\{V(t,x)\}_{t\geq0,\, x\in\R^{\bar{d}}}$  is governed by  general, not necessarily ergodic, diffusions with jumps.

In the proofs of Theorems \ref{tm1.1} and \ref{tm1.3}, the most crucial ingredient was the $\tau$-periodicity
of a driving diffusion with jumps $\process{F}$ and velocity function $v(x)$. By having this property we were able
to switch to a (strongly) ergodic Markov process $\process{F^{\tau}}$ on a compact space
$[0,\tau]$, satisfying
$v(F_t)=v(F_t^{\tau})$, and deduced the limiting behaviors in (\ref{eq1.3}) and (\ref{eq1.4}). In a general situation, when  $\process{F}$ or
$v(x)$ are not $\tau$-periodic, we cannot perform a similar trick,
and unlike in the $\tau$-periodic case, the
(strong) ergodicity  strongly depends on the dimension of the state space.
Let us be more precise.
First, recall that  a progressively measurable  strong
Markov process $\process{M}$  on the state
space $(\R^{\bar{d}},\mathcal{B}(\R^{\bar{d}}))$, $\bar{d}\geq1$, is called
\begin{enumerate}
  \item [(i)] \emph{irreducible} if there exists a $\sigma$-finite measure $\varphi(dy)$ on
$\mathcal{B}(\R^{\bar{d}})$ such that whenever $\varphi(B)>0$ we have
$\int_0^{\infty}\mathbb{P}^{x}(M_t\in B)dt>0$ for all $x\in\R^{\bar{d}}$;
  \item [(ii)] \emph{recurrent} if it is
                      $\varphi$-irreducible and if $\varphi(B)>0$ implies $\int_{0}^{\infty}\mathbb{P}^{x}(M_t\in B)dt=\infty$ for all
                      $x\in\R^{\bar{d}}$;
\item [(iii)] \emph{Harris recurrent} if it is $\varphi$-irreducible and if $\varphi(B)>0$ implies $\mathbb{P}^{x}(\tau_B<\infty)=1$ for all
                      $x\in\R^{\bar{d}}$, where $\tau_B:=\inf\{t\geq0:M_t\in
                      B\}$ ;
 \item [(iv)] \emph{transient} if it is $\varphi$-irreducible
                       and if there exists a countable
                      covering of $\R^{\bar{d}}$ with  sets
$\{B_j\}_{j\in\N}\subseteq\mathcal{B}(\R^{\bar{d}})$, such that for each
$j\in\N$ there is a finite constant $c_j\geq0$ such that
$\int_0^{\infty}\mathbb{P}^{x}(M_t\in B_j)dt\leq c_j$ holds for all
$x\in\R^{\bar{d}}$.
\end{enumerate}
Let us remark that if $\{M_t\}_{t\geq0}$ is a $\varphi$-irreducible
Markov process, then the irreducibility measure $\varphi(dy)$ can be
maximized, that is, there exists a unique ``maximal" irreducibility
measure $\psi(dy)$ such that for any measure $\bar{\varphi}(dy)$,
$\{M_t\}_{t\geq0}$ is $\bar{\varphi}$-irreducible if, and only if,
$\bar{\varphi}\ll\psi$ (see \cite[Theorem 2.1]{tweedie-mproc}).
According to this, from now on, when we refer to irreducibility
measure we actually refer to the maximal irreducibility measure. In
the sequel, we consider  only the so-called open set irreducible
Markov processes, that is,  we consider only $\psi$-irreducible
Markov processes whose maximal irreducibility measure $\psi(dy)$
satisfies the following \emph{open-set irreducibility} condition
\begin{description}
  \item[(C6)] $\psi(O)>0$ for every open set $O\subseteq\R^{\bar{d}}$.
\end{description}
Obviously, the Lebesgue measure $dx$ satisfies condition
(\textbf{C6}) and a Markov process $\process{M}$ will be
$dx$-irreducible if $\mathbb{P}^{x}(M_t\in B)>0$ for all $t>0$
and all $x\in\R^{\bar{d}}$ whenever $B\in\mathcal{B}(\R^{\bar{d}})$ has  positive Lebesgue measure. In particular, the process
$\process{M}$ will be $dx$-irreducible if the transition kernel
$\mathbb{P}^{x}(M_t\in dy)$ possesses a transition density function $p(t,x,y)$,
such that $p(t,x,y)>0$ for all $t>0$ and all $x,y\in\R^{\bar{d}}.$
In Remark \ref{rm2} we have  commented that the question of the strict positivity  of $p(t,x,y)$ of general diffusions with jumps is a  non-trivial problem. The best we were able to obtain is given in the following proposition, which, regardless the possible lack of the strict positivity of  $p(t,x,y)$, shows the
Lebesgue irreducibility property of a class of diffusions with jumps.
\begin{proposition}\label{p2.2}
Let $\process{F}$ be a  diffusion with jumps with symbol $q(x,\xi)$
satisfying \eqref{eq3.01} and
\begin{align}\label{eq6}\mathbb{E}^{x}\left[e^{i\langle \xi,F_t-x\rangle}\right]={\rm Re}\,\mathbb{E}^{x}\left[e^{i\langle \xi,F_t-x\rangle}\right],\quad x,\xi\in\R^{\bar{d}}.\end{align}
Then, $\process{F}$ possesses a transition
density function $p(t,x,y)$, such that for every
$t_0>0$ and every $n\geq1$ there exists $\varepsilon(t_0)>0$ such that $p(t,x,y+x)>0$
for all $t\in[nt_0,n(t_0+1)]$, all $x\in\R^{\bar{d}}$ and all
$|y|<n\varepsilon(t_0)$.
\end{proposition}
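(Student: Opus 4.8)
The plan is to deduce the positivity claim from a local statement at small times via a Chapman--Kolmogorov ``chaining'' argument, and to prove that local statement through the Fourier inversion formula together with a positive--definiteness observation that hypothesis \eqref{eq6} supplies. I would first establish existence and joint continuity of $p(t,x,y)$ exactly as in Remark~\ref{rm2}. Put $\varphi_{t,x}(\xi):=\mathbb{E}^{x}[e^{i\langle\xi,F_t-x\rangle}]$; by \eqref{eq6} this function is real, and applying \cite[Corollary~3.6]{rene-holder} to the real and imaginary parts of $e^{i\langle\xi,\cdot\rangle}\in C_b^{2}(\R^{\bar{d}})$ yields $\varphi_{t,x}(\xi)=1-\int_0^{t}\mathbb{E}^{x}[q(F_u,\xi)e^{i\langle\xi,F_u-x\rangle}]\,du$, so letting $t\downarrow0$ shows $q(x,\xi)$ is real, hence (being a continuous negative definite function with $q(x,0)=0$) nonnegative. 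Thus \eqref{eq2.4} reads $\sup_{x}|\varphi_{t,x}(\xi)|\le\exp[-\tfrac{t}{16}\inf_{x'}q(x',2\xi)]$, which together with \eqref{eq3.01} makes $\varphi_{t,x}(\cdot)$ integrable with a bound uniform in $x$; by \cite[Proposition~2.5]{sato-book} the map $p(t,x,y):=(2\pi)^{-\bar{d}}\int_{\R^{\bar{d}}}e^{-i\langle\xi,y-x\rangle}\varphi_{t,x}(\xi)\,d\xi=(2\pi)^{-\bar{d}}\int_{\R^{\bar{d}}}\cos\langle\xi,y-x\rangle\,\varphi_{t,x}(\xi)\,d\xi$ (the last equality since $\varphi_{t,x}$ is real and even) is a transition density, and joint continuity in $(x,y)$ follows from continuity of $x\mapsto\varphi_{t,x}(\xi)$ (\cite[Theorem~3.2]{rene-conserv}) and dominated convergence, just as in Remark~\ref{rm2}.

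The reduction I would carry out is the local statement: \emph{for every $0<a\le b$ there is $r=r(a,b)>0$ such that $p(s,x,v)>0$ whenever $s\in[a,b]$, $x\in\R^{\bar{d}}$, $|v-x|<r$.} Granting this, fix $t_0>0$ and $n\ge1$, choose $m=m(t_0)\in\N$ as in the next step, set $[a,b]:=[t_0/m,(t_0+1)/m]$ and $\varepsilon(t_0):=m\,r(a,b)$, which does not depend on $n$. For $t\in[nt_0,n(t_0+1)]$ write $t=(nm)s$ with $s:=t/(nm)\in[a,b]$, and for $|y|<n\varepsilon(t_0)$ connect $x$ to $y+x$ by $w_i:=x+\tfrac{i}{nm}y$, $0\le i\le nm$, so $|w_i-w_{i-1}|=|y|/(nm)<r(a,b)$. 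Since $p(s,w_{i-1},w_i)>0$ for all $i$ and $(a',b')\mapsto p(s,a',b')$ is continuous, the integrand of $p(t,x,y+x)=\int\!\cdots\!\int\prod_{i=1}^{nm}p(s,w'_{i-1},w'_i)\,dw'_1\cdots dw'_{nm-1}$ (with $w'_0=x$, $w'_{nm}=y+x$) is strictly positive on a neighbourhood of $(w_1,\dots,w_{nm-1})$, and because $p\ge0$ everywhere the integral is strictly positive; this is the assertion of the proposition.

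It remains to prove the local statement, which is the crux. The model situation is a driving \emph{L\'evy} process: there \eqref{eq6} says the symbol $q$ is real (the process symmetric) and the increments are spatially homogeneous, so $p(2s,x,x+v)=\int_{\R^{\bar{d}}}p(s,0,a)p(s,0,v-a)\,da=(2\pi)^{-\bar{d}}\int_{\R^{\bar{d}}}e^{-i\langle\xi,v\rangle}\varphi_{s}(\xi)^{2}\,d\xi$ with $\varphi_{s}(\xi)^{2}=e^{-2sq(\xi)}\ge0$ and integrable; hence $p(2s,x,\cdot)$ is a continuous positive--definite function, $p(2s,x,x)=(2\pi)^{-\bar{d}}\int\varphi_{s}(\xi)^{2}\,d\xi>0$ (since $\varphi_{s}(0)=1$) and independent of $x$, and $p(2s,x,x)-p(2s,x,x+v)=(2\pi)^{-\bar{d}}\int(1-\cos\langle\xi,v\rangle)\varphi_{s}(\xi)^{2}\,d\xi\le\tfrac{|v|^{2}}{2}(2\pi)^{-\bar{d}}\int|\xi|^{2}e^{-2sq(\xi)}\,d\xi$, the last integral being finite by \eqref{eq3.01} and bounded for $s$ in compact subsets of $(0,\infty)$; this delivers $r$ (indeed a uniform lower bound $p(2s,x,v)\ge\delta>0$). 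For a genuinely $x$--dependent symbol the two--step kernel $p(2s,x,v)=\int p(s,x,w)p(s,w,v)\,dw$ is no longer a convolution, so I would combine: the low--frequency estimate $\varphi_{s,x}(\xi)\ge1-c(1+|\xi|^{2})s$ from condition $(\textbf{C2})$ and the reality of $\varphi_{s,x}$ (valid on a ball $|\xi|\le R_0(s)$ whose radius tends to $\infty$ as $s\downarrow0$), the symmetry of $p(s,w,\cdot)$ about $w$ for \emph{every} base point $w$ (again from \eqref{eq6}), and the $x$--uniform decay \eqref{eq2.4}, and then pick $m=m(t_0)$ large enough, i.e.\ $s$ small enough, to close the estimate. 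The main obstacle is precisely this last step: the crude bound $|\varphi_{s,x}|\le1$ on $\{|\xi|>R_0(s)\}$ is by itself too weak, because for heavy--tailed (e.g.\ low index stable--like) processes the high--frequency part of $\int\cos\langle\xi,v-x\rangle\varphi_{s,x}(\xi)\,d\xi$ carries the bulk of the mass, and controlling it uniformly in $x$ without the convolution structure is exactly why the result is stated as ``the best we were able to prove''.
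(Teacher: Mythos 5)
Your overall architecture matches the paper's: establish existence and joint continuity of $p(t,x,y)$ via the inversion formula as in Remark \ref{rm2}, prove positivity of $p(t,x,\cdot)$ in a neighbourhood of $x$ of radius $\varepsilon(t_0)$ uniformly for $t\in[t_0,t_0+1]$ and $x\in\R^{\bar{d}}$, and then propagate by Chapman--Kolmogorov to $t\in[nt_0,n(t_0+1)]$ and $|y|<n\varepsilon(t_0)$ (your $m$-fold subdivision is an unnecessary extra layer, but the chaining itself is sound and is exactly the paper's last step). However, there is a genuine gap, and you name it yourself: you do not prove the local positivity statement, which is the entire content of the proposition beyond Remark \ref{rm2}. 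Your plan for it (low-frequency expansion of $\varphi_{s,x}$ from (\textbf{C2}), symmetry, the decay \eqref{eq2.4}, and ``$s$ small'') cannot be closed with the tools you list, because on the high-frequency region the bound $|\varphi_{s,x}|\le1$ gives no sign information, and without spatial homogeneity there is no convolution identity to square the characteristic function into something nonnegative. A proposal that ends by conceding ``the main obstacle is precisely this last step'' is not a proof.

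The paper closes exactly this point with a different ingredient: a pointwise \emph{lower} bound on the characteristic function, obtained by Schilling and Wang (personal communication, via \cite[Theorem 2.1]{rene-wang-feller} under \eqref{eq6}),
\begin{equation*}
\inf_{x\in\R^{\bar{d}}}\mathbb{E}^{x}\left[e^{i\langle\xi,F_t-x\rangle}\right]\;\ge\;\frac{1}{2}\exp\left[-4t\sup_{|\eta|\le|\xi|}\sup_{x\in\R^{\bar{d}}}q(x,\eta)\right],\qquad t>0,\ \xi\in\R^{\bar{d}}.
\end{equation*}
Since \eqref{eq6} makes the characteristic function real, this says it is not merely bounded by $1$ at high frequencies but stays strictly positive, uniformly in $x$; plugging it into the inversion formula gives $p(t,x,x)\ge(2\pi)^{-\bar{d}}\int\exp[-4(t_0+1)\sup_{|\eta|\le|\xi|}\sup_{x}q(x,\eta)]\,d\xi>0$ uniformly for $t\in[t_0,t_0+1]$ and $x\in\R^{\bar{d}}$ (finite positivity of the integral follows from (\textbf{C2})). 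Combined with the modulus-of-continuity estimate $|p(t,x,y+x)-p(t,x,x)|\le(2\pi)^{-\bar{d}}\int|1-e^{-i\langle\xi,y\rangle}|\exp[-\frac{t}{16}\inf_{x}q(x,2\xi)]\,d\xi$, which by \eqref{eq3.01} and dominated convergence is uniform in $x$ and in $t\ge t_0$, this yields the $\varepsilon(t_0)$ you were missing, directly on $[t_0,t_0+1]$ and without any smallness of the time step. So the missing idea in your proposal is precisely this uniform positivity of the (real) characteristic function; everything else you wrote is consistent with the paper's argument.
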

\begin{proof}
First, according to \cite[Theorem 2.1]{rene-wang-feller},  the  condition in \eqref{eq6} implies  that $q(x,\xi)=\rm{Re}\,\it{q}(x,\xi)$, that is, $b(x)=0$
and $\nu(x,dy)$ are symmetric measures for all $x\in\R^{\bar{d}}$. Consequently, the condition in \eqref{eq3.02} trivially holds  true.
Thus, $\process{F}$
possesses a transition density function $p(t,x,y)$
which is given by (\ref{eq2.5}), and,  for every $t>0$ and
every $x,y\in\R^{\bar{d}}$, we have
\begin{align*}|p(t,x,y+x)-p(t,x,x)|&=(2\pi)^{-\bar{d}}\left|\int_{\R^{\bar{d}}}\left(1-e^{-i\langle\xi, y\rangle}\right)\mathbb{E}^{x}\left[e^{i\langle\xi,(F_t-x)\rangle}\right]d\xi\right|\\&\leq
(2\pi)^{-\bar{d}}\int_{\R^{\bar{d}}}\left|1-e^{-i\langle\xi,y\rangle}\right|\exp\left[-\frac{t}{16}\inf_{x\in\R^{\bar{d}}}\it{q}(x,\rm{2}\xi)\right]d\xi.\end{align*}
Now, by \eqref{eq3.01} and the dominated convergence theorem, we conclude that for
every $t_0>0$ the continuity of the function $y\longmapsto p(t,x,y)$
at $x$ is uniformly for all $t\geq t_0$ and all $x\in\R^{\bar{d}}.$ Next,
by applying \cite[Theorem 2.1]{rene-wang-feller} (under \eqref{eq6}), R. L. Schilling and  J. Wang (personal communication) obtained
$$\inf_{x\in\R^{\bar{d}}}\mathbb{E}^{x}\left[e^{i\langle\xi,(F_t-x)\rangle}\right]\geq\frac{1}{2}\exp\left[-4t\sup_{|\eta|\leq|\xi|}\sup_{x\in\R^{\bar{d}}}q(x,\eta)\right],\quad
t>0,\ \xi\in\R^{\bar{d}}.$$ Hence, for every $t_0>0$,
$$p(t,x,x)=(2\pi)^{-\bar{d}}\int_{\R^{\bar{d}}}\mathbb{E}^{x}\left[e^{i\langle\xi,(F_t-x)\rangle}\right]d\xi\geq\frac{1}{4\pi}\int_{\R^{\bar{d}}}\exp\left[-4(t_0+1)\sup_{|\eta|\leq|\xi|}\sup_{x\in\R^{\bar{d}}}q(x,\eta)\right]d\xi>0$$
uniformly for all $t\in[t_0,t_0+1]$ and all $x\in\R^{\bar{d}}$. According
to this, there exists $\varepsilon(t_0)>0$ such that $p(t,x,y+x)>0$
for all $t\in[t_0,t_0+1]$, all $x\in\R^{\bar{d}}$ and all
$|y|<\varepsilon(t_0)$. Now, for any $n\geq1$, by the
Chapman-Kolmogorov equation, we have that  $p(t,x,y+x)>0$ for all
$t\in[nt_0,n(t_0+1)]$, all $x\in\R^{\bar{d}}$ and all $|y|<n\varepsilon(t_0)$,
which proves the desired result.
\end{proof}

  Further, it is well known that
every $\psi$-irreducible Markov process is either recurrent or
transient (see \cite[Theorem 2.3]{tweedie-mproc}) and, clearly,
every Harris recurrent Markov process is recurrent,  but in general,
these two properties are not equivalent. They differ on the set of
the irreducibility measure zero (see \cite[Theorem
2.5]{tweedie-mproc}). However,
 for a diffusion with jumps satisfying
condition (\textbf{C6}) these two
 properties are equivalent (see  \cite[Proposition
 2.1]{sandric-tams}).

Next, it is shown in \cite[Theorem 2.6]{tweedie-mproc} that if $\process{M}$ is
a recurrent process, then there exists a unique (up to constant
multiples) invariant measure $\pi(dx)$. If the invariant measure is
finite, then it may be normalized to a probability measure.  If
$\process{M}$ is (Harris) recurrent with finite invariant measure
$\pi(dx)$, then $\process{M}$ is called \emph{positive $($Harris$)$
recurrent}; otherwise it is called \emph{null $($Harris$)$ recurrent}. One would expect
that every positive (Harris) recurrent process  is strongly ergodic,
but in general this is not true (see \cite{meyn-tweedie-II}). In the
case of an open-set irreducible diffusion with jumps $\process{F}$,
these two properties coincide.
In particular, for this class of
processes, ergodicity coincides with strong ergodicity. Indeed,
according to \cite[Theorem 6.1]{meyn-tweedie-II} and \cite[Theorem
3.3]{rene-wang-feller} it suffices to show that if
 $\process{F}$ possesses an invariant probability measure $\pi(dx)$, then
it is recurrent. Assume that $\process{F}$ is transient. Then there
exists a countable
                      covering of $\R^{\bar{d}}$ with  sets
$\{B_j\}_{j\in\N}\subseteq\mathcal{B}(\R^{\bar{d}})$, such that for each
$j\in\N$ there is a finite constant $M_j\geq0$ such that
$\int_0^{\infty}p^{t}(x,B_j)dt\leq M_j$ holds for all $x\in\R^{\bar{d}}$.
Let $t>0$ be arbitrary. Then for each $j\in\N$ we have
$$t\pi(B_j)=\int_0^{t}\int_{\R^{d}}p^{s}(x,B_j)\pi(dx)ds\leq M_j.$$ By
letting $t\longrightarrow\infty$ we get that $\pi(B_j)=0$ for all
$j\in\N$, which is impossible.

Now, we can conclude that a $\bar{d}$-dimensional, $\bar{d}\geq3$, open-set irreducible diffusion with jumps is never ergodic.
Indeed, as we commented,  if an invariant
probability measure of  an irreducible Markov process  exists, then it must be  recurrent. But  the recurrence and
transience of open set irreducible diffusions with jumps, similarly
as of L\'evy processes, depends on the dimension of the state space.
More precisely, according to \cite[Theorem 2.8]{sandric-tams}, every
truly $\bar{d}$-dimensional, $\bar{d}\geq3$,  open set irreducible diffusions with jumps is always transient. In particular, $\bar{d}$-dimensional,
$\bar{d}\geq3$,  diffusions are transient. On the other hand,  one-dimensional
and two-dimensional symmetric  diffusions are recurrent
(see \cite[Theorem 2.9]{sandric-tams}). Also $\bar{d}$-dimensional,
$\bar{d}\geq2$, stable-like processes are always transient (see
\cite[Theorem 2.10 and Corollary 3.3]{sandric-tams}). For
conditions for recurrence and transience of one-dimensional
stable-like processes,  see \cite{bjoern-overshoot}, \cite{franke-periodic, franke-periodicerata}, \cite{sandric-spa},
\cite{sandric-rectrans},
\cite{sandric-ergodic}, \cite{sandric-tams} and \cite{sandric-periodic}.
 For sufficient conditions for
ergodicity of  diffusions, see \cite{bhat-erg-1, bhat-erg-1-er}, \cite{bhat-erg-2}. For
sufficient  conditions for strong ergodicity of one-dimensional stable-like processes and diffusion with jumps,
see \cite{sandric-spa} and \cite{wang-ergodic}, respectively.  A necessary and sufficient condition for the existence
of an invariant probability measure $\pi(dx)$ of a $\bar{d}$-dimensional diffusion with jumps
$\process{F}$  with symbol
$q(x,\xi)$, for which $C_c^{\infty}(\R^{\bar{d}})$ is an operator core of the corresponding Feller generator (that is,
$\mathcal{A}^{\infty}$ is the only extension of $\mathcal{A}^{\infty}|_{C_c^{\infty}(\R^{\bar{d}})}$ on
$\mathcal{D}_{\mathcal{A}^{\infty}}$), has been given in \cite[Theorems 3.1 and 4.1]{sch-beh}
and it reads as follows
$$\int_{\R^{\bar{d}}}e^{i\langle\xi,x\rangle}q(x,\xi)\pi(dx)=0,\quad
 \xi\in\R^{\bar{d}}.$$
Also, let us remark that a (nontrivial) L\'{e}vy
process is never (strongly) ergodic since it cannot possess an invariant probability measure (see \cite[Exercise 29.6]{sato-book}).

We end this paper with the following observations. Regardless the (strong) ergodicity property we have the following limiting behaviors. Let
$\process{L}$ be a $\bar{d}$-dimensional  L\'evy process with Feller generator
$(\mathcal{A}^{\infty},\mathcal{D}_{\mathcal{A}^{\infty}})$ and let $f\in C_c^{\infty}(\R^{\bar{d}})$. Note that $\mathcal{A}^{\infty}f\in L^{1}(dx,\R^{\bar{d}})\cap B_b(\R^{\bar{d}})$. Then, since
$\hat{f}\in L^{1}(dx,\R^{\bar{d}})$ (recall that $\hat{f}(\xi)$ denotes the Fourier transform of $f(x)$), we have
$$f(x)=\int_{\R^{d}}e^{i\langle\xi,x\rangle}\hat{f}(\xi)d\xi$$ and by
using an analogous   approach as in the proof of Theorem \ref{tm1.3}, we obtain that
$$n^{-\beta}\int_0^{nt}\mathcal{A}^{\infty}f(L_s)ds\stackrel{\hbox{\scriptsize{$\textrm{d}$}}}{\longrightarrow}0$$
for any  $\beta>0$ and any initial distribution $\rho(dx)$ of $\process{L}$. Further, if $\process{F}$  is a
$\bar{d}$-dimensional diffusion with jumps with  Feller generator
$(\mathcal{A}^{\infty},\mathcal{D}_{\mathcal{A}^{\infty}})$ and symbol $q(x,\xi)$ satisfying \begin{align}\label{eq3.1}\int_{\{|\xi|<r\}}\frac{d\xi}{\inf_{x\in\R^{\bar{d}}}{\rm Re}\, q(x,\xi)}<\infty,\end{align} for some $r>0$, and the conditions in \eqref{eq3.02} and (\textbf{C6}). Then, according to \cite[the proof of Theorem 1.1]{rene-wang-feller},
$$n^{-\beta}\int_0^{nt}f(F_s)ds\stackrel{\hbox{\scriptsize{$\mathbb{P}^{\rho}\textrm{-}\, \textrm{a.s.}$}}}{\xrightarrow{\hspace*{1cm}}}0$$
for any $\beta>0$, any $f\in C_c^{\infty}(\R^{\bar{d}})$ and any initial distribution $\rho(dx)$ of $\process{F}$.  Let us remark that the condition in (\ref{eq3.1}), together with  \eqref{eq3.02} and (\textbf{C6}), implies the transience of $\process{F}$ (see \cite[Theorem 1.1]{rene-wang-feller}). In the L\'evy process case, by the definition of transience, assumptions  \eqref{eq3.02} and (\textbf{C6}) are not needed (see \cite[Theorem 37.5]{sato-book}).
 \begin{proposition}Let $\process{F}$ be a $\bar{d}$-dimensional diffusion with jumps with symbol $q(x,\xi)$ satisfying \eqref{eq3.02} and \begin{align}\label{eq3.2}\int_{0}^{\infty}\int_{\R^{\bar{d}}}\exp\left[-t\inf_{x\in\R^{\bar{d}}}{\rm Re}\, q(x,\xi)\right]d\xi dt<\infty.\end{align} Then,
 $$n^{-\beta}\int_0^{nt}f(F_s)ds\stackrel{\hbox{\scriptsize{$\mathbb{P}^{\rho}\textrm{-}\, \textrm{a.s.}$}}}{\xrightarrow{\hspace*{1cm}}}0$$
for any $\beta>0$, any $f\in L^{1}(dx,\R^{\bar{d}})$ and any   initial distribution $\rho(dx)$ of $\process{F}$. In addition, if $f\in L^{1}(dx,\R^{\bar{d}})\cap B_b(\R^{\bar{d}}),$ then it suffices to require that \eqref{eq3.2} holds on  $(t_0,\infty)$, for some $t_0>0$.
\end{proposition}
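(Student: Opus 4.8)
The plan is to deduce from \eqref{eq3.2} and \eqref{eq3.02} that $\process{F}$ has a bounded potential (Green) kernel; this forces $\int_0^{\infty}|f(F_s)|\,ds<\infty$ $\mathbb{P}^{\rho}$-a.s., after which the claimed convergence is a purely deterministic squeeze. The argument parallels the transience discussion preceding the proposition, but replaces the transience input invoked there by a direct potential-theoretic estimate.

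First I would note that \eqref{eq3.2} implies \eqref{eq3.01} for every $t>0$: since ${\rm Re}\,q(x,\xi)\ge 0$, the function $t\mapsto\exp[-t\inf_{x\in\R^{\bar{d}}}{\rm Re}\,q(x,\xi)]$ is nonincreasing in $t$, hence so is $\phi(t):=\int_{\R^{\bar{d}}}\exp[-t\inf_{x\in\R^{\bar{d}}}{\rm Re}\,q(x,\xi)]\,d\xi$, and finiteness of $\int_0^{\infty}\phi(t)\,dt$ forces $\phi(t_n)<\infty$ along some sequence $t_n\downarrow 0$, so $\phi(t)<\infty$ for all $t>0$. Together with \eqref{eq3.02} this places us in the setting of Remark~\ref{rm2}: $\process{F}$ possesses a transition density $p(t,x,y)$ given by the Fourier-inversion formula \eqref{eq2.5}, and combining \eqref{eq2.5} with the estimate \eqref{eq2.4} and the change of variables $\eta=2\xi$ gives the uniform bound
\[
\sup_{x,y\in\R^{\bar{d}}}p(t,x,y)\ \le\ g(t):=(4\pi)^{-\bar{d}}\int_{\R^{\bar{d}}}\exp\Big[-\tfrac{t}{16}\inf_{x\in\R^{\bar{d}}}{\rm Re}\,q(x,\eta)\Big]d\eta,\qquad t>0 .
\]
A change of variables $s=t/16$ and \eqref{eq3.2} then yield $M:=\int_0^{\infty}g(t)\,dt<\infty$, so the potential kernel $G(x,y):=\int_0^{\infty}p(t,x,y)\,dt$ satisfies $\sup_{x,y\in\R^{\bar{d}}}G(x,y)\le M$.

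Next, for $f\in L^{1}(dx,\R^{\bar{d}})$, Tonelli's theorem gives, for every $x\in\R^{\bar{d}}$,
\[
\mathbb{E}^{x}\Big[\int_0^{\infty}|f(F_s)|\,ds\Big]=\int_{\R^{\bar{d}}}|f(y)|\,G(x,y)\,dy\ \le\ M\,\|f\|_{L^{1}(dx)}<\infty ,
\]
and integrating against $\rho(dx)$ shows $\mathbb{E}^{\rho}[\int_0^{\infty}|f(F_s)|\,ds]<\infty$, hence $\int_0^{\infty}|f(F_s)|\,ds<\infty$ $\mathbb{P}^{\rho}$-a.s. On that full-measure event, for every $\beta>0$ and every $t\ge0$,
\[
\Big|n^{-\beta}\int_0^{nt}f(F_s)\,ds\Big|\ \le\ n^{-\beta}\int_0^{\infty}|f(F_s)|\,ds\ \xrightarrow[n\to\infty]{}\ 0 ,
\]
which is the assertion. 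For the final sentence, when \eqref{eq3.2} is assumed only on $(t_0,\infty)$ and $f\in L^{1}(dx,\R^{\bar{d}})\cap B_b(\R^{\bar{d}})$, I would split the integral at a suitable time $T$ (one may take $T=16t_0$): on $[0,T]$ one uses $\mathbb{E}^{x}[\int_0^{T}|f(F_s)|\,ds]\le T\|f\|_{\infty}$, while on $(T,\infty)$ the density still exists and $\mathbb{E}^{x}[\int_{T}^{\infty}|f(F_s)|\,ds]\le\|f\|_{L^{1}(dx)}\int_{T}^{\infty}g(s)\,ds<\infty$ by \eqref{eq3.2}; adding these gives again $\int_0^{\infty}|f(F_s)|\,ds<\infty$ a.s., and the same squeeze finishes the proof.

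I expect the only real obstacle to be the first step: checking that \eqref{eq3.2} and \eqref{eq3.02} genuinely place us inside the framework of Remark~\ref{rm2} — i.e.\ that the transition density exists, admits the representation \eqref{eq2.5}, and obeys the pointwise bound $g(t)$ — since this is where all of the hypotheses are actually used. Once $\sup_{x,y}G(x,y)<\infty$ is secured, everything else is routine Fubini/Tonelli bookkeeping together with a deterministic limit.
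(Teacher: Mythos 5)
Your argument is correct and follows essentially the same route as the paper: the uniform bound $\sup_{x,y}p(t,x,y)\leq(4\pi)^{-\bar{d}}\int_{\R^{\bar{d}}}\exp[-\tfrac{t}{16}\inf_x{\rm Re}\,q(x,\xi)]d\xi$, time-integration via \eqref{eq3.2} to get $\mathbb{E}^{\rho}[\int_0^{\infty}|f(F_s)|ds]<\infty$, the deterministic squeeze, and the split at a fixed time using boundedness of $f$ for the second statement. The only (harmless) difference is that you reassemble the density bound from the ingredients of Remark \ref{rm2} (\eqref{eq2.4}, \eqref{eq2.5}) and track the factor $16$ explicitly, whereas the paper simply cites this estimate.
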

\begin{proof}
First, recall that, according to \cite[Theorem 2.6]{sandric-tams} and \cite[Theorem 1.1]{rene-wang-feller}, $\process{F}$ has a transition density function $p(t,x,y)$ which satisfies $$\sup_{x,y\in\R^{\bar{d}}}p(t,x,y)\leq (4\pi)^{-\bar{d}}\int_{\R^{\bar{d}}}\exp\left[-\frac{t}{16}\inf_{x\in\R^{\bar{d}}}{\rm Re}\,q(x,\xi)\right]d\xi.$$ By using this fact, for any $f\in L^{1}(dx,\R^{\bar{d}})$ and any initial distribution $\rho(dx)$ of $\process{F}$,   we have
\begin{align*}\mathbb{E}^{\rho}\left[\int_0^{\infty}|f(F_s)|ds\right]&=\int_{\R^{\bar{d}}}\int_0^{\infty}\int_{\R^{\bar{d}}}|f(y)|p(s,x,y)dyds\rho(dx)\\&\leq (4\pi)^{-\bar{d}}\int_{\R^{\bar{d}}}|f(y)|dy\int_0^{\infty}\int_{\R^{\bar{d}}}\exp\left[-\frac{s}{16}\inf_{x\in\R^{\bar{d}}} {\rm Re}\, q(x,\xi)\right]d\xi ds.\end{align*} In particular,
$$\int_0^{\infty}f(F_s)ds<\infty,\quad \mathbb{P}^{\rho}\textrm{-a.s.},$$ which proves the claim.

To prove the second statement, by completely the same reasoning as above we  conclude that
$$\int_{t_0}^{\infty}f(F_s)ds<\infty,\quad \mathbb{P}^{\rho}\textrm{-a.s.},$$ for some $t_0>0$. Finally, due to boundedness of $f(x)$,
$$\int_{0}^{\infty}f(F_s)ds<\infty,\quad \mathbb{P}^{\rho}\textrm{-a.s.},$$ which again implies the desired result.
\end{proof}
Finally, let $\process{M}$ be a $\bar{d}$-dimensional progressively measurable Markov process possessing the \emph{local-time} process (occupation measure), that is, a nonnegative process $\{l(t,y)\}_{t\geq0}$, such that for any $x\in\R^{\bar{d}}$, any $t\geq0$ and any nonnegative $f\in B_b(\R^{\bar{d}})$,  $$\int_0^{t}f(M_s)ds=\int_{\R^{\bar{d}}}f(y)l(t,y)dy,\quad \mathbb{P}^{x}\textrm{-a.s.}$$
A sufficient condition for the existence of the local time for a diffusion with jumps $\process{F}$ with symbol $q(x,\xi)$ satisfying \eqref{eq3.02} is as follows
$$ \int_{\R^{\bar{d}}}\frac{d\xi}{1+\inf_{x\in\R^{\bar{d}}}{\rm Re}\,q(x,\xi)}<\infty$$ (see \cite[Theorem 1.1]{rene-wang-feller}).
Now, let $f\in L^{1}(dx,\R^{\bar{d}})$. Then, for any $\beta>0$ and any $t>0$, we have $$n^{\beta \bar{d}}\int_0^{t}f(n^{\beta}M_s)ds=n^{\beta \bar{d}}\int_{\R^{\bar{d}}}f(n^{\beta}y)l(t,y)dy=\int_{\R^{\bar{d}}}f(y)l(t,n^{-\beta}y)dy.$$ Hence, if
$\{l(t,y)\}_{t\geq0}$ is continuous in $y$, $\mathbb{P}^{x}\textrm{-a.s.}$ for all $x\in\R^{\bar{d}}$, we have
$$n^{\beta \bar{d}}\int_0^{t}f(n^{\beta}M_s)ds\stackrel{\hbox{\scriptsize{$\mathbb{P}^{\rho}\textrm{-}\, \textrm{a.s.}$}}}{\xrightarrow{\hspace*{1cm}}}l(t,0)\int_{\R^{\bar{d}}}f(y)dy$$
for any $\beta>0$, any $f\in L^{1}(dx,\R^{\bar{d}})$ and any initial distribution $\rho(dx)$ of $\process{L}$.
In the one-dimensional L\'evy process case necessary and sufficient conditions for the existence and continuity of local times have been given in  \cite{barlow-local-1}, \cite{barlow-local-2} and \cite{bertoin-book}.

 \section*{Acknowledgement}
Financial support through the Marcus Endowment at Penn State (for Guodong Pang), the Croatian Science Foundation (under Project 3526), NEWFELPRO Programme (under Project 31) and  Dresden Fellowship Programme (for Nikola Sandri\'c) is gratefully acknowledged.
The authors thank Alexei Novikov at Department of Mathematics in Penn State for showing us his work \cite{kom-nov-ryz} and helpful discussions, which have motivated this work.
The authors also thank the anonymous reviewer for careful reading of the paper and
for helpful comments that led to improvement in the presentation.

\bibliographystyle{alpha}
\bibliography{References}

\end{document}